\newtheorem{lemma}{Lemma}[section]
\newtheorem{corollary}[lemma]{Corollary}
\newtheorem{theorem}[lemma]{Theorem}
\newtheorem{proposition}[lemma]{Proposition}
\newtheorem{remark}[lemma]{Remark}
\newtheorem{definition}[lemma]{Definition}
\newtheorem{definitions}[lemma]{Definitions}
\newtheorem{example}[lemma]{Example}
\newtheorem{examples}[lemma]{Examples}
\newcommand{\Z}{{\mathbb{Z}}}
\newcommand{\uloopr}[1]{\ar@'{@+{[0,0]+(-4,5)}@+{[0,0]+(0,10)}@+{[0,0] +(4,5)}}^{#1}}
\definecolor{turquoise2}{rgb}{0,0.898039,0.933333}
\definecolor{magenta}{rgb}{1,0,1}
\definecolor{olivedrab}{rgb}{0.419608,0.556863,0.137255}
\definecolor{purple2}{rgb}{0.568627,0.172549,0.933333}
\definecolor{amethyst}{rgb}{0.6, 0.4, 0.8}
\definecolor{ao(english)}{rgb}{0.0, 0.5, 0.0}
\definecolor{atomictangerine}{rgb}{1.0, 0.6, 0.4}
\definecolor{amber(sae/ece)}{rgb}{1.0, 0.49, 0.0}
\definecolor{alizarin}{rgb}{0.82, 0.1, 0.26}
\definecolor{auburn}{rgb}{0.43, 0.21, 0.1}
\definecolor{aqua}{rgb}{0.0, 1.0, 1.0}
\begin{document}

\subjclass[2010]{Primary 16D70; Secondary  16D25, 16E20, 16D30} \keywords{Leavitt path algebra, socle, extreme cycle, line point, purely infinite ideal.}

\author[Vural Cam]{Vural Cam}
\address{Vural Cam: Department of Mathematics. Sel\c{c}uk University, Sel\c{c}uklu 42003 Konya, Turkey}
\email{cvural@selcuk.edu.tr}

\author[Crist\'obal Gil Canto]{Crist\'obal Gil Canto}
\address{Crist\'obal Gil Canto:  Departamento de \'Algebra Geometr\'{\i}a y Topolog\'{\i}a, Fa\-cultad de Ciencias, Universidad de M\'alaga, Campus de Teatinos s/n. 29071 M\'alaga. Spain.}
\email{cristogilcanto@gmail.com}

\author[M\"uge Kanuni]{M\"uge Kanuni}
\address{M\"uge Kanuni: Department of Mathematics. D\"uzce University, Konuralp 81620 D\"uzce, Turkey}
\email{mugekanuni@duzce.edu.tr}

\author[Mercedes Siles ]{Mercedes Siles Molina}
\address{Mercedes Siles Molina: Departamento de \'Algebra Geometr\'{\i}a y Topolog\'{\i}a, Fa\-cultad de Ciencias, Universidad de M\'alaga, Campus de Teatinos s/n. 29071 M\'alaga.   Spain.}
\email{msilesm@uma.es}

\thanks{
The second and the last author are supported by the Junta de Andaluc\'{\i}a and Fondos FEDER, jointly, through project  FQM-336.
They are also supported by the Spanish Ministerio de Econom\'ia y Competitividad and Fondos FEDER, jointly, through project  MTM2016-76327-C3-1-P.
\newline
This research started while the first and third authors were visiting the Universidad de M\'alaga. They both thank their coauthors for their hospitality. These authors also thank Nesin Mathematics Village, Izmir, for maintaining an excellent research environment while conducting this research in the summer of 2018.
\newline
The first author is supported by the Scientific and Technological Research Council of Turkey (T\"{U}B\.{I}TAK-B\.{I}DEB) 2019-International Post-Doctoral Research Fellowship during his visit to Universidad de M\'alaga.}

\medskip

\title[Largest ideals in Leavitt Path Algebras]{Largest ideals in Leavitt Path Algebras}

\begin{abstract} 
We identify largest ideals in Leavitt path algebras:  the  largest locally left/right artinian (which is the largest semisimple one), the largest locally left/right noetherian without minimal idempotents, the largest exchange, and the largest purely infinite. This last ideal is described as a direct sum of purely infinite simple pieces plus purely infinite non-simple and non-decomposable pieces. The invariance under ring isomorphisms of these ideals is also studied.
\end{abstract}

\maketitle

\section{Introduction and preliminary results}

Since they were introduced in \cite{AA1} and \cite{AMP}, Leavitt path algebras have attracted significant interest and attention. When examining the structure of a Leavitt path algebra $L_K(E)$ for a field $K$ and an arbitrary graph $E$, one can realize that four important pieces appear: these are the set of line points $P_l$, the set of vertices in cycles without exits $P_c$, the set of vertices in extreme cycles $P_{ec}$ and the set $P_{b^\infty}$ of vertices whose tree has infinitely many bifurcations or at least one infinite emitter. 

 To begin with, the ideal generated by $P_l$ was firstly studied in \cite{AMMS1, AMMS2}: it is precisely the socle of the Leavitt path algebra and it is isomorphic to a direct sum of matrix rings over $K$. Secondly, the ideal generated by $P_c$, studied in  \cite{ABS,AAPS,CMMSS}, is isomorphic to a direct sum of matrix rings over $K[x,x^{-1}]$. On the other hand, the ideal generated by $P_{ec}$, originally presented in \cite{CMMSS}, is  a direct sum of purely infinite simple rings. To highlight the importance of $P_l$, $P_c$ and $P_{ec}$, we remind that these three sets are the key ingredients in order to determine the center of a Leavitt path algebra \cite{CMMSS}.
 
 In this work we show that $I(P_l)$ (respectively $I(P_c)$), contains the information about the locally left/right artinian (respectively left/right noetherian) side of the Leavitt path algebra; more concretely, we will see that it is the largest locally left/right artinian (respectively left/right noetherian without minimal idempotents) ideal inside $L_K(E)$. As for the ideal generated by $P_{ec}$ we prove that it is purely infinite. The notion of purely infiniteness for rings was introduced in \cite{AGPS}, where the (not necessarily simple) purely infinite Leavitt path algebras were characterized too. We will see that, although the ideal generated by $P_{ec}$ is purely infinite, it is not the largest with this property. Then we will determine the largest purely infinite ideal (which will be not necessarily simple) inside $L_K(E)$. The following goal in this paper will be to find the largest exchange ideal of a Leavitt path algebra. We know that it exists by \cite[Theorem 3.5]{AMM} and here we will determine exactly which set of vertices generates it.

This paper is organized as follows. In Section \ref{Sec:ideals} we show that, for Leavitt path algebras of arbitrary graphs, the ideal generated by $P_l \cup P_c \cup P_{ec} \cup P_{b^\infty}$ is dense and that $I(P_{ec} \cup P_{b^\infty})$ is invariant under any ring isomorphism. The invariance of $I(P_l)$ and $I(P_c)$ is still known (the first ideal because it is the socle of the Leavitt path algebra, and the second one by \cite[Theorem 6.11]{ABS}). In Section \ref{Sec:largestlocart&noeth}
we prove that the ideal generated by $P_l$ is the largest locally artinian ideal of the Leavitt path algebra and that the ideal generated by $P_c$ is the largest locally noetherian one without minimal idempotents. In the next section we complete the picture about  largest ideals with a certain property: concretely we find the largest purely infinite ideal. To this aim we prove in Proposition \ref{Proposition:piisgraded} that every purely infinite ideal is graded and that, despite $I(P_{ec})$ being purely infinite, it is not the largest one inside $L_K(E)$. We then construct a new hereditary and saturated set of vertices, denoted by $P_{ppi}$, that contains $P_{ec}$ (Lemma \ref{lem:tpiher}) and which generates the largest purely infinite ideal of the Leavitt path algebra (Proposition \ref{Prop:PPIispurelyinfinite} and Theorem \ref{Theorem:piisthemaximal}). We also prove that this ideal is invariant. We devote Section \ref{Sec:structurelpi} to describe the internal structure of the ideal generated by $P_{ppi}$; in fact, we describe $I(P_{ppi})$ in Theorem \ref{theor:biggestPIideal} as a direct sum of ideals which are isomorphic to purely infinite simpe Leavitt path algebras plus ideals which are isomorphic to purely infinite not simple not decomposable Leavitt path algebras. Finally, in Section \ref{Sec:largestexchange} we identify graphically the set of vertices which generates the largest exchange ideal in a Leavitt path algebra, namely $P_{ex}$ (see Theorem \ref{thm:largestex}) and we prove that this ideal is invariant under any ring isomorphism too.

\medskip

We now present some background material. Throughout the paper $E = (E^0, E^1, s, r)$ will denote a directed graph with set of vertices $E^0$,  set of edges $E^1$, source map $s$, and range map $r$. In particular, the source vertex of an edge $e$ is denoted by $s(e)$, and the range vertex by $r(e)$. We call $E$ {\it finite} if both $E^0$ and $E^1$ are finite sets and \emph{row-finite} if $s^{-1}(v)= \{e\in E^1 \mid s(e) = v\}$ is a finite set for all $v\in E^0$. A vertex $v$ is called an {\it infinite emitter} is
$s^{-1}(v)$ is not a finite set. A {\it sink} is a vertex $v$ for which  $s^{-1}(v) $ is empty. Vertices which are neither sinks nor infinite emitters are called \textit{regular vertices}. For each $e\in E^{1}$, we call $e^{\ast}$ a \textit{ghost edge}. We let
$r(e^{\ast})$ denote $s(e)$, and we let $s(e^{\ast})$ denote $r(e)$. A \textit{path} $\mu$ of length $|\mu|=n>0$ is a finite sequence of edges $\mu
=e_{1}e_{2}\ldots e_{n}$ with $r(e_{i})=s(e_{i+1})$ for all
$i=1,\ldots,n-1$. In this case $\mu^{\ast}=e_{n}^{\ast}\ldots e_{2}^{\ast}e_{1}^{\ast}$ is the corresponding  \textit{ghost path}. A vertex is considered a path of length $0$. The set of all sources and ranges of the edges appearing in the expression of the path $\mu$
is denoted by $\mu^{0}$. When $\mu$ is a vertex, $v^0$ will denote $v$. The set of all paths of a graph $E$ is denoted by ${\rm Path}(E)$.

If there is a path from a vertex $u$ to a vertex $v$, we write $u\geq v$.
A subset $H$ of $E^{0}$ is called \textit{hereditary} if, whenever $v\in H$ and
$w\in E^{0}$ satisfy $v\geq w$, then $w\in H$. A  set $X$ is
\textit{saturated} if for any  vertex $v$ which is neither a sink nor an infinite emitter, $r(s^{-1}(v))\subseteq X$
implies $v\in X$. Given a nonempty subset $X$ of vertices, we define  its \emph{saturation}, $S(X)$, as follows
$$S(X):= \{v\in {\rm Reg}(E) \ \vert \ \{r(e) \ \vert \ s(e)=v\}\subseteq X\} \cup X.$$
The \emph{tree} of $X$, denoted by $T(X)$, is the set
$$T(X):=\{u\in E^0 \ \vert \  x\geq u \ \text{for some} \ x\in X\}.$$ This is a hereditary subset of $E^0$.  The notation $\overline{X}$ ($\overline{X}^E$ if we want to emphasize the graph $E$) will be used for the hereditary and saturated closure of $X$, which is built, for example, in \cite[Lemma 2.0.7]{AAS}.
Concretely, if $X$ is nonempty, then we define  
$X_0:=T(X)$, and for $n\geq 0$ we define inductively $X_{n+1}:=S(X_n)$. Then $\overline X = \cup_{n\geq 0}X_n$.


A path $\mu$ $=e_{1}\dots e_{n}$, with $n>0$, is \textit{closed} if $r(e_{n})=s(e_{1})$,
in which case $\mu$ is said to be \textit{based at the vertex
}$s(e_{1})$ and $s(e_{1})$ is named as the \textit{base of the path}. A closed path $\mu$ is called \textit{simple} provided that
it does not pass through its base more than once, i.e., $s(e_{i})\neq
s(e_{1})$ for all $i=2,\ldots,n$. The closed path $\mu$ is called a
\textit{cycle} if it does not pass through any of its vertices twice, that is,
if $s(e_{i})\neq s(e_{j})$ for every $i\neq j$.

An \textit{exit} for a path $\mu=e_{1}\dots e_{n}$, with $n>0$, is an edge $e$ such that
$s(e)=s(e_{i})$ for some $i$ and $e\neq e_{i}$. We say the graph $E$ satisfies
\textit{Condition} (L) if every cycle in $E$ has an exit. We say the graph $E$ satisfies
\textit{Condition} (K) if for each $v\in E^0$ which lies on a closed simple path, there exist at least two distinct closed simple paths based at $v$. We denote by $P_c^E$ the set of vertices of a graph $E$ lying in
cycles without exits.

A cycle $c$ in a graph $E$ is called an \textit{extreme cycle} if $c$ has exits and for every path $\lambda$ starting at a vertex in $c^0$ there exists $\mu \in {\rm Path}(E)$ such that $0 \ne \lambda \mu$ and $r(\lambda \mu) \in c^0$.

A vertex $v \in E^0$ is called a \textit{bifurcation} vertex (or it is said that \textit{there is a bifurcation at} $v$) if $\vert s_E^{-1}(v) \vert \geq 2$. A {\it line point} is a vertex $v$ whose tree $T(v)$ does not contain any bifurcations or cycles.
We will denote by $P^E_l$ the set of all line points, by $P^E_{ ec}$ the set of vertices which belong to extreme cycles, while $P^E_{ lec}:= P^E_l \sqcup P^E_{c} \sqcup P^E_{ ec}$. Moreover, $P_{b^{\infty}}^E$ denotes the set of all vertices $v \in E^0$ whose tree $T(v)$ contains infinitely many bifurcation vertices or at least one infinite emitter.
We will eliminate the superscript $E$ in these sets if there is no ambiguity about the graph we are considering.

Let $K$ be a field, and let $E$ be a directed graph. The {\it Leavitt path $K$-algebra} $L_K(E)$ {\em of $E$ with coefficients in $K$} is  the free $K$-algebra generated by the set $\{v\mid v\in E^0\}$, together with  $\{e,e^*\mid e\in E^1\}$, which satisfy the following relations:

(V)  $vw = \delta_{v,w}v$ for all $v,w\in E^0$, \

(E1) $s(e)e=er(e)=e$ for all $e\in E^1$,

(E2)  $r(e)e^*=e^*s(e)=e^*$ for all $e\in E^1$, and

(CK1)  $e^*e'=\delta _{e,e'}r(e)$ for all $e,e'\in E^1$.

(CK2)  $v=\sum _{\{ e\in E^1\mid s(e)=v \}}ee^*$ for every regular vertex $v\in E^0$.

We refer the reader to the book \cite{AAS} for other definitions and results on Leavitt path algebras.
\medskip

\section{Dense ideals and invariance under isomorphisms} \label{Sec:ideals}

In this section we will see that every vertex in an arbitrary graph connects to a line point, a cycle without exits, an extreme cycle or to a vertex for which its tree has infinitely many bifurcations. 
These different types of vertices: $P_l, P_{c}, P_{ec}$ are related to  ideals which will be the largest in an specific sense, as will be shown in Section 3.

In terms of properties of the associated Leavitt path algebra, the connection to $P_l, P_c, P_{ec}$ and $P_{b^\infty}$ will mean that the ideal generated by $P_l\cup P_c\cup P_{ec}\cup P_{b^\infty}$ is an essential ideal, equivalently, it is a dense ideal of the corresponding Leavitt path algebra.

We prove also that the ideal generated by vertices in an extreme cycle and vertices whose tree has infinitely many bifurcations is invariant under isomorphisms.

We remark the reader that when we speak about isomorphisms, we are considering ring isomorphisms. It was proved in \cite[Proposition 1.2]{KMMS} that if the center of a Leavitt path algebra $L_K(E)$ is isomorphic to $K$, then both concepts coincide. In general, this is not the case.

We start by discussing some properties of the sets that generate the ideals of our concern.

Every Leavitt path algebra has a natural $\Z$-grading given by the length of paths (see \cite[Section 2.1]{AAS}). In a graded algebra over an abelian group, the ideal generated by a set of idempotents of degree zero (where zero is the neutral element in the group) is a graded ideal. In particular, in a Leavitt path algebra $L_K(E)$, the ideals $I( P^E_l)$, $I(P^E_{c})$, $I(P^E_{ec})$ and $I(P_{b^{\infty}}^E)$ are graded.

Recall that $P_l$, $P_c$ and $P_{ec}$ are all hereditary subsets of vertices, however $P_{b^{\infty}}$ is not necessarily hereditary as the following examples show.
\begin{examples}\label{PbinfinityNotHereditary}
\rm
(i) Let $E$ be the infinite clock graph having vertices $\{u, v_1, v_2, v_3... \}$ and edges $\{ e_1, e_2, e_3... \}$ with $s(e_i)=v$ and $r(e_i)=v_i$ for all $i= 1,2...$.

\begin{figure}[H]
	\begin{center}
	\begin{tikzpicture}
	[
	->,
	>=stealth',
	auto,node distance=3cm,
	thick,
	main node/.style={circle, draw, font=\sffamily\Large\bfseries}
	]
	
\fill (0,0) circle (2pt) node[label={[label distance=-1ex]right:{$u$}}] {};
\fill (1.7,0) circle (2pt) node[label={[label distance=-1ex]right:{$v_3$}}]  {};
\fill (0,1.7) circle (2pt) node[label={[label distance=-1ex]right:{$v_1$}}]  {};
\fill (1.7,1.7) circle (2pt) node[label={[label distance=-1ex]right:{$v_2$}}] {};
\fill (1.7,-1.7) circle (2pt) node[label={[label distance=-1ex]right:{$v_4$}}]  {};
\node  at (-0.5,-0.3) {\reflectbox{$\ddots$}};

    \draw[draw=black] (0.4,0) -- (1.55,0) node[above] at (1,0.02){$e_3$} ;
    \draw[draw=black] (0,0.2) -- (0,1.5) node[left] at (-0.0,0.8) {$e_1$} ;
    \draw[draw=black] (0.15,0.15) -- (1.5,1.5) node[above] at (0.6,0.7) {$e_2$};
    \draw[draw=black] (0.15,-0.20) -- (1.5,-1.5) node[above] at (0.95,-0.9) {$e_4$};
    
	\end{tikzpicture}
	\end{center}
\end{figure}

Since $u$ is an infinite emitter, it is in $P_{b^{\infty}}$. However $v_i \notin P_{b^{\infty}}$ for any $i$, hence having that
$P_{b^{\infty}}$ is not hereditary.
%
%
\par
(ii) Let $E$ be the row-finite graph having  vertices $\{v_i, w_i \ \vert \ i = 1, 2, ... \}$, i.e. 

\begin{figure}[H]
	\begin{center}
	\begin{tikzpicture}
	[
	->,
	>=stealth',
	auto,node distance=3cm,
	thick,
	main node/.style={circle, draw, font=\sffamily\Large\bfseries}
	]
	
\fill (0,0) circle (2pt) node[label={[label distance=-1ex]left:{$v_1$}}] {};
\fill (1.7,0) circle (2pt) node[label={[label distance=-1ex]left:{$v_2$}}]  {};
\fill (3.6,0) circle (2pt) node[label={[label distance=-1ex]left:{$v_3$}}]  {};
\fill (5.5,0) circle (2pt) node[label={[label distance=-1ex]left:{$v_4$}}] {};
\fill (1.7,1.5) circle (2pt) node[label={[label distance=-1ex]left:{$w_1$}}]  {};
\fill (3.6,1.5) circle (2pt) node[label={[label distance=-1ex]left:{$w_2$}}] {};
\fill (5.5,1.5) circle (2pt) node[label={[label distance=-1ex]left:{$w_3$}}]  {};
\fill (7.2,1.5) circle (2pt) node[label={[label distance=-1ex]left:{$w_4$}}]  {};
\node  at (7.5,0.0) {$\cdots$};
\node  at (7.7,0.8) {\reflectbox{$\ddots$}};

    \draw[draw=black] (0.2,0) -- (1.1,0) ;
    \draw[draw=black] (0.15,0.15) -- (1.55,1.30) ;
    \draw[draw=black] (1.85,0.15) -- (3.25,1.30) ;
    \draw[draw=black] (1.9,0) -- (3,0) ;
    \draw[draw=black] (3.75,0.15) -- (5.15,1.30) ;
    \draw[draw=black] (3.8,0) -- (4.9,0) ;
    \draw[draw=black] (5.7,0) -- (6.8,0) ;
    \draw[draw=black] (5.65,0.15) -- (7.0,1.20) ;

	\end{tikzpicture}
	\end{center}
\end{figure}

Then $P_{b^{\infty}}=\{v_i : i = 1, 2, ... \}$ and $P_l= \{w_i: i = 1, 2, ...\}$. Again, $P_{b^{\infty}}$ is not hereditary as $w_i \notin P_{b^{\infty}}$.
\end{examples}


Dense ideals of a Leavitt path algebra were first studied in \cite{S}.
When the set of vertices of the graph is finite, it is shown that the ideal generated by $P_l\cup P_c\cup P_{ec}$, denoted by $I_{lce}$, is a dense ideal \cite[Theorem 2.9]{CMMSS}. However, this is not the case in general, as the following example shows.

\begin{example}
\rm{Consider the graph $E$:}
\begin{figure}[H]
	\begin{center}
	\begin{tikzpicture}
	[
	->,
	>=stealth',
	auto,node distance=3cm,
	thick,
	main node/.style={circle, draw, font=\sffamily\Large\bfseries}
	]
	\node  at (-1.2,0) {};
	\fill (0,0) circle (2pt)   {};
    \fill (1.5,0) circle (2pt)   {};
    \fill (3,0) circle (2pt)   {};
    \fill (4.5,0) circle (2pt)   {};
    \node  at (5,0) {$\cdots$};

    \draw [->,draw=black] (0.15,0.15) to [bend right=315, looseness=1] (1.3,0.1);
	\draw [->,draw=black] (0.15,-0.15) to [bend left=315, looseness=1] (1.3,-0.1);
    \draw [->,draw=black] (1.65,0.15) to [bend right=315, looseness=1] (2.8,0.1);
    \draw [->,draw=black] (1.65,-0.15) to [bend left=315, looseness=1] (2.8,-0.1);
	\draw [->,draw=black] (3.15,0.15) to [bend right=315, looseness=1] (4.3,0.1);
	\draw [->,draw=black] (3.15,-0.15) to [bend left=315, looseness=1] (4.3,-0.1);
	
	\end{tikzpicture}
	\end{center}
\end{figure}
It has neither cycles nor line points, that is,  $P_{ec} = P_c=P_l= \emptyset$. Hence $I_{lce}= {0}$, which is not a dense ideal. Note that $E^0= P_{b^\infty}$.
\end{example}

\medskip

Our aim is to construct a dense ideal for any Leavitt path algebra over an arbitrary graph. To this end we will first find a subset of vertices such that every vertex in the graph connects to it. Then, we will prove that the ideal generated by these vertices is an essential ideal of the Leavitt path algebra. Being essential is equivalent to being dense, as every Leavitt path algebra is left nonsingular and for left nonsingular rings both notions coincide.


Let $E$ be an arbitrary graph and $H$ a hereditary  subset of $E^0$. The \textit{restriction graph}, denoted by $E_H$, is: 
$$E_H^0:=H, \ E_H^1:=\{e \in E^1 \ \vert \ s(e) \in H\}, $$
where the source and range functions in $E_H$ are simply the source and range functions in $E$ restricted to $H$.

\begin{lemma}\label{Lemma:E^0connectsto} Let $E$ be an arbitrary graph. Then every vertex $v$ connects to at least one of: a line point, a cycle without exits, an extreme cycle, or a vertex whose tree has infinite bifurcations, i.e., every vertex in $E$ connects to $P_l\cup P_c\cup P_{ec}\cup P_{b^{\infty}}$.
\end{lemma}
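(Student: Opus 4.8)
The plan is to argue by contradiction using a well-founded descent on the tree of a putative "bad" vertex, exploiting the fact that a vertex failing to connect to $P_{b^\infty}$ has a tree with only finitely many bifurcations and no infinite emitters, hence behaves essentially like a row-finite graph with bounded branching.

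First I would fix a vertex $v$ and assume $v$ connects to none of $P_l$, $P_c$, $P_{ec}$, $P_{b^\infty}$. The key initial reduction: since $v \notin P_{b^\infty}$ (as $v$ connects to $v$), the tree $T(v)$ contains only finitely many bifurcation vertices and no infinite emitters. Passing to the restriction graph $E_{T(v)}$ (legitimate since $T(v)$ is hereditary), we may assume $E$ is row-finite, $E^0 = T(v)$, and $E$ has only finitely many bifurcation vertices; moreover no vertex of $E$ lies in $P_l$, $P_c$, or $P_{ec}$ (connecting within $E_{T(v)}$ is the same as connecting in $E$, and these sets only shrink under restriction for the relevant vertices).

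Next I would extract a combinatorial contradiction. Because there are finitely many bifurcations, there is a vertex $w$ with $v \geq w$ such that $T(w)$ contains \emph{no} bifurcation vertex at all: starting from $v$ and always walking toward a bifurcation, this process must terminate (only finitely many bifurcations, and once you pass the last one you are in a bifurcation-free tree) — unless one walks forever through bifurcations, which forces a repeated bifurcation vertex and hence a cycle that re-enters a bifurcation, but one must still check this carefully. Now $T(w)$ has no bifurcations: it is a single ray or a "lollipop." If it is eventually a sink-terminating or infinite simple path with no cycle, then $w$ (or a descendant) is a line point, contradiction. If it contains a cycle, that cycle has no exit (no bifurcations in $T(w)$), so its vertices lie in $P_c$, contradiction. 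The extreme-cycle case is subsumed: an extreme cycle has exits, so it would require a bifurcation somewhere in the relevant trees, but more precisely, if $T(v)$ had a cycle with an exit, I would need to chase where the exit leads — either to eventually another bifurcation-free region (giving $P_l$ or $P_c$) or, if every path from the cycle returns to it, to an extreme cycle. Here I would use an induction/minimality argument on the (finite) number of bifurcation vertices of the tree to close the loop.

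**The main obstacle** I anticipate is the extreme-cycle/exit bookkeeping: when $T(v)$ does contain a cycle $c$ with an exit, one must decide whether $c$ is extreme (every forward path returns to $c^0$) or not, and in the non-extreme case follow an "escaping" path to a strictly smaller tree (fewer bifurcations) to apply the inductive hypothesis. Setting up the right induction parameter — the number of bifurcation vertices in $T(v)$, which is finite precisely because $v \notin P_{b^\infty}$ — and verifying it genuinely decreases when one moves to the subtree rooted at an appropriate descendant is the delicate point; the base case (zero bifurcations) is exactly the clean dichotomy "line point or cycle without exit" described above. I would also take care that an infinite emitter cannot sneak in, but this is immediate since $v \notin P_{b^\infty}$ rules out infinite emitters in $T(v)$ outright.
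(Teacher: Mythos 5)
Your plan is essentially correct, and its engine is the same descent the paper uses in its hard case: assuming $v$ reaches none of the four sets, any cycle in $T(v)$ must have an exit (else $P_c$) and cannot be extreme (else $P_{ec}$), so some path escapes it permanently, and each escape discards at least one bifurcation; since not connecting to $P_{b^{\infty}}$ forces $|{\rm Bif}_{T(v)}|<\infty$ and excludes infinite emitters, this terminates in a contradiction. The genuine difference is organizational: the paper splits on $|T(v)|$ finite versus infinite, settling the finite case by citing the classification for finite graphs \cite[Lemma 3.7.10]{AAS} and running the descent only for infinite trees (as an explicit construction of infinitely many distinct bifurcations $s(e_1), s(e_2),\dots$), whereas you run a single self-contained induction on $|{\rm Bif}_{T(v)}|$ whose base case re-proves the needed fragment of that lemma (no bifurcations implies line point or cycle without exits). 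Your route avoids the external citation but obliges you to verify the strict decrease of the induction parameter, which does hold: for a non-extreme cycle $c$ with $c^0\subseteq T(v)$ and an exit, there is a path $\lambda$ starting in $c^0$ with $T(r(\lambda))\cap c^0=\emptyset$, so the exit's source (a bifurcation lying on $c$) is lost when passing to $T(r(\lambda))$; and if $T(v)$ has bifurcations but no cycles, a bifurcation $u$ with $|{\rm Bif}_{T(u)}|$ minimal emits an edge $e$ with $u\notin T(r(e))$, again dropping the count. Do not, however, leave the sentence ``there is a vertex $w$ with $v\geq w$ such that $T(w)$ contains no bifurcation'' as stated: it is false in general (a single vertex with two loops), and your own caveat about walking forever through bifurcations is precisely where the cycle analysis and the induction must take over.
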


\begin{proof} Let $X=P_l\cup P_c\cup P_{ec}\cup P_{b^{\infty}}$. For any $v \in E^0$ we will show that $v$ connects to $X$. We distinguish two cases:
\begin{enumerate}
  \item Suppose $|T(v)| < \infty$. Then $H=T(v)$ is a (finite) hereditary subset of $E^0$ and the graph $E_H$ has a finite number of vertices. By  \cite[Lemma 3.7.10]{AAS},  $v$, considered as a vertex in $E_H$, connects to a line point, a cycle without exits, or an extreme cycle. Note that every line point, every cycle without exits and every extreme cycle in $E_H$ is also a line point, a cycle without exits or an extreme cycle, respectively, in $E$; this shows our claim.
  \item Consider $|T(v)| = \infty$. Assume that $T(v) \cap X = \emptyset$, that is, $v$ does not connect to any element in $X$. This means that for any $w \in T(v)$, $w$ is neither a line point, nor a cycle without exits, nor an extreme cycle and it is not in $P_{b^{\infty}}$. First observe that for every $w \in T(v)$ we have $|T(w)| = \infty$ because otherwise $H'=T(w)$ is a finite hereditary subset and applying \cite[Lemma 3.7.10]{AAS} as before to the graph $E_{H'}$, we will have that $w$ connects to a line point, a cycle without exits or an extreme cycle, but this is not possible since we are assuming $T(v) \cap X = \emptyset$.

For $w\in E^0$ define ${\rm Bif}_{T(w)}:=\{u \in E^0 \ | \ u \in T(w) \text{ and there is a bifurcation at } u\}$. We claim that for every $w \in T(v)$ we have $|{\rm Bif}_{T(w)}| \neq 0$. Suppose that for some $w \in T(v)$ we have $|{\rm Bif}_{T(w)}| = 0$. As $w$ is not a line point, $T(w)$ has to contain all the vertices of a cycle $c$, since $T(w) \cap X = \emptyset$ because $T(v) \cap X = \emptyset$. Hence, $c$ has an exit, say $e$, which is a bifurcation in $T(w)$. This is a contradiction. Take $w_1 \in T(v)$. If $w_1 \in P_{b^{\infty}}$ we get a contradiction again with the fact that $T(v) \cap X = \emptyset$. So suppose $w_1 \in T(v)$ and $w_1 \notin P_{b^{\infty}}$; then we know $|T(w_1)| = \infty$ and $0 < |{\rm Bif}_{T(w_1)}| < \infty$. Assume that $T(w_1)$ does not contain any vertex in a cycle; in that case it exists $u_1 \in T(w_1)$ which connects to a line point but this is not possible according to our hypothesis. Therefore, $T(w_1)$ must contain the vertices of a cycle $c_1$, and this cycle has, necessarily, an exit, say $e_1$. Write $r(e_1)=w_2$. Consider $T(w_2)$; then, for the same reasons as before,  $T(w_2)$ has to contain the vertices of a cycle $c_2$, and this cycle must have an exit, say $e_2$. This $r(e_2)$ cannot connect to $c_1$, otherwise we have a vertex that connects to an extreme cycle. If we continue in the same manner, $T(w_1)$ contains infinitely many bifurcations $\{s(e_1), s(e_2), s(e_3) \ldots\}$; but this is a contradiction. This finishes the proof.
\end{enumerate}
\end{proof}

\medskip

A very useful criterion for determining when an ideal is dense is given in \cite[Proposition 1.10]{CMMSS}, which states that for a hereditary subset $H$ of a graph $E$, $I(H)$ is a dense ideal if and only if every vertex of $E$ connects to $H$. Now, Lemma \ref{Lemma:E^0connectsto} gives enough information to determine a dense ideal for every Leavitt path algebra.

\begin{proposition} Let $E$ be an arbitrary graph and
$X= P_l\cup P_c\cup P_{ec}\cup P_{b^{\infty}}$. Then $I(X)$ is a dense ideal.
\end{proposition}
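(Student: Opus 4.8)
The plan is to combine the two results the paper has just set up. First I would observe that $X = P_l \cup P_c \cup P_{ec} \cup P_{b^\infty}$ is not itself hereditary in general (Examples \ref{PbinfinityNotHereditary} show $P_{b^\infty}$ fails hereditariness), so the density criterion \cite[Proposition 1.10]{CMMSS}, which is stated for hereditary sets, does not apply to $X$ directly. The natural fix is to pass to the hereditary and saturated closure $\overline{X}$: since $I(X) = I(\overline{X})$ as ideals of $L_K(E)$ (the hereditary and saturated closure generates the same ideal, cf. \cite[Lemma 2.0.7]{AAS} and the surrounding discussion in the excerpt), it suffices to show that $I(\overline{X})$ is dense.

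Next I would apply \cite[Proposition 1.10]{CMMSS} to the hereditary set $H := \overline{X}$: $I(\overline{X})$ is dense if and only if every vertex of $E$ connects to $\overline{X}$. By Lemma \ref{Lemma:E^0connectsto}, every vertex $v \in E^0$ connects to some vertex in $X$, and since $X \subseteq \overline{X}$, every vertex connects to $\overline{X}$. (One could even work with the hereditary set $T(X)$ rather than the full saturation, since Lemma \ref{Lemma:E^0connectsto} already delivers connection to $X$, hence to $T(X)$, and $I(T(X)) = I(X)$; but using $\overline{X}$ is cleanest.) Hence $I(X) = I(\overline{X})$ is dense.

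The only subtlety — and the step I would be most careful about — is the identification $I(X) = I(\overline{X})$ when $X$ is an arbitrary (non-hereditary, non-saturated) set of vertices. This requires knowing that adjoining to $X$ the vertices reachable from $X$ (the tree), and then saturating, does not enlarge the generated ideal: vertices in $T(X) \setminus X$ lie below vertices of $X$ and hence in $I(X)$, and saturation adds vertices $v$ all of whose edge-ranges already lie in the set, so $v = \sum_{s(e)=v} ee^*$ forces $v \in I(X)$ by the (CK2) relation. This is standard for Leavitt path algebras and is exactly the content behind \cite[Lemma 2.0.7]{AAS}; I would cite it rather than reprove it. With that in hand the proposition is immediate, so in fact there is no real obstacle here — the work was all done in Lemma \ref{Lemma:E^0connectsto}, and this proposition is just the bookkeeping that converts "every vertex connects to $X$" into "$I(X)$ is dense" via the hereditary-closure trick and the cited density criterion.
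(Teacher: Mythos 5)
Your proof is correct and follows essentially the same route as the paper, which simply invokes Lemma \ref{Lemma:E^0connectsto} together with \cite[Proposition 1.10]{CMMSS}. The only difference is that you explicitly patch the hereditariness issue (replacing $X$ by $T(X)$ or $\overline{X}$ before applying the density criterion), a point the paper's two-line proof glosses over; your extra care is sound and arguably an improvement.
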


\begin{proof}
By Lemma \ref{Lemma:E^0connectsto}, every vertex connects to $X$ and by \cite[Proposition 1.10]{CMMSS} we are done.
\end{proof}



In what follows we prove that in an arbitrary graph, the ideal generated by $P_{ec} \cup P_{b^{\infty}}$ is invariant under any ring isomorphism of $L_K(E)$.

For any arbitrary graph $E$ the ideal $I(P_{l})$, which is the socle, is invariant under any algebra isomorphism and
$I(P_{c})$ is shown to be invariant under any ring isomorphisms in \cite[Theorem 6.11]{ABS}.
Moreover, it is proven in \cite[Theorem 4.1]{KMMS} that $I(P_{ec})$ remains invariant under any ring isomorphism when $E$ is a finite graph. 

In order to establish Proposition \ref{Proposition:ecpbinfntyinvariant}, we need to see that the ideal $I(P_{ec} \cup P_{b^{\infty}})$ does not contain primitive idempotents. Recall that an idempotent $e$ in an algebra is called \textit{primitive} if $e$ cannot be decomposed as a sum of two non-zero orthogonal idempotents.

\begin{lemma}\label{Lemma:primitiveidempotents}
Let $E$ be an arbitrary graph and $K$ any field. Then
$I(P_{ec} \cup P_{b^{\infty}})$ does not contain any primitive idempotent.
\end{lemma}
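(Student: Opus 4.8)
The plan is to show that any nonzero idempotent $e \in I(P_{ec} \cup P_{b^\infty})$ decomposes as a sum of two nonzero orthogonal idempotents, hence is not primitive. The natural first move is to reduce to the structure of the ideal: write $H = \overline{P_{ec} \cup P_{b^\infty}}$, the hereditary and saturated closure, so that $I(P_{ec} \cup P_{b^\infty}) = I(H)$, and recall that $I(H) \cong L_K({}_HE)$ (or $L_K(E_H)$ for the appropriate subgraph/hedgehog construction — whichever version the paper uses elsewhere). Thus it suffices to prove that $L_K(E')$ has no primitive idempotent, where $E' = {}_HE$. Since $P_{ec} \subseteq H$ and, more to the point, every vertex of $E'$ lies in $T(P_{ec} \cup P_{b^\infty})$, one should exploit the defining feature of $P_{ec}$ and $P_{b^\infty}$: a vertex in an extreme cycle connects to a cycle with an exit, and a vertex in $P_{b^\infty}$ connects to infinitely many bifurcations or to an infinite emitter. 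Either way no vertex in $H$ is a line point, and in fact $P_l^{E'} = \emptyset$; by \cite{AMMS1, AMMS2} this says $\mathrm{Soc}(L_K(E')) = 0$, i.e. $L_K(E')$ has no minimal idempotents.

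The key step is then to upgrade "no minimal idempotents" (a statement about socle) to "no primitive idempotents." These are not the same in general, so I would argue as follows. Let $0 \neq e$ be an idempotent in $L_K(E') \cong eL_K(E')e$-modulo-Morita considerations; by the standard reduction (every idempotent in a Leavitt path algebra is equivalent to a sum $\sum v_i$ of vertices, up to conjugation, via \cite[Proposition 2.7.x]{AAS}-type results on idempotents), it is enough to handle $e$ a single vertex $v \in H$. Now I want to split $v$. Since $v \notin P_l$, the tree $T(v)$ contains a bifurcation or a cycle; in either subcase I can produce inside $vL_K(E')v$ two nonzero orthogonal idempotents summing to $v$. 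Concretely: if $v$ is a regular bifurcation vertex, then $v = \sum_{s(e)=v} ee^*$ by (CK2), and grouping the summands gives $v = p + q$ with $p,q$ nonzero orthogonal idempotents (nonzero because $ee^* \neq 0$ always). If $v$ is not itself a bifurcation vertex, follow the unique edge path out of $v$ until the first bifurcation vertex $w$ (which exists because $v \notin P_l$ and $v \notin P_c$ — vertices in cycles without exits are excluded since $P_c \cap H$ consists of... actually $P_c$ may meet $H$, so here one must be slightly careful; if $v$ lies in a cycle without exits it would be a minimal-idempotent situation, but such $v$ cannot be in $\overline{P_{ec}\cup P_{b^\infty}}$ for degree/connectivity reasons, which needs a short argument), or until an infinite emitter $w$; then $v = \alpha\alpha^*$ for the connecting path $\alpha$ with $\alpha\alpha^* = v$ (isolated line segment) and $w = \alpha^*\alpha$, reducing to the bifurcation/infinite-emitter case at $w$. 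For an infinite emitter $w$, even though (CK2) does not apply, one still has for any single edge $f$ with $s(f) = w$ that $ff^*$ is a nonzero idempotent with $ff^* \leq w$ and $ff^* \neq w$ (since $w - ff^* = \sum_{e \neq f} ee^* + (\text{correction})$ is nonzero — again needing the standard fact that a single $ff^*$ is never all of an infinite emitter), giving the splitting $w = ff^* + (w - ff^*)$.

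The main obstacle I anticipate is the bookkeeping around $P_c$: a priori $P_c \cap H$ need not be empty, and a vertex lying on a cycle without exits \emph{does} generate a minimal idempotent, so the "no primitive idempotent" claim would fail for such a $v$ — hence the real content is to show that $\overline{P_{ec} \cup P_{b^\infty}}$ contains no vertex lying on a cycle without exits (and, relatedly, no line point). For line points this follows because $P_l$ is hereditary and a vertex connecting down to $P_{ec} \cup P_{b^\infty}$ would force those to contain line points, which they don't. For $P_c$: if $u$ lies on a cycle $c$ without exits and $u \geq x$ for some $x \in P_{ec} \cup P_{b^\infty}$, then $x \in c^0$ (no exits means $T(u) = c^0$), but $c^0 \cap P_{ec} = \emptyset$ (extreme cycles have exits) and $c^0 \cap P_{b^\infty} = \emptyset$ (a cycle without exits has no bifurcations in its tree and no infinite emitter), contradiction; the saturation steps in forming $\overline{\,\cdot\,}$ preserve this because adding a regular vertex all of whose out-edges land in $H$ cannot create a new cycle-without-exits vertex (such a vertex would be a bifurcation or would have all its range in a region already free of such vertices). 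Once this exclusion is in hand, every vertex of $E_H$ falls into the bifurcation or infinite-emitter case, the splitting above applies, and no primitive idempotent survives. I would present the exclusion lemma first, then the vertex-splitting, then the reduction of a general idempotent to a vertex, in that order.
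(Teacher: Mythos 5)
Your overall strategy is genuinely different from the paper's. The paper does not split idempotents by hand: it identifies $I(P_{ec}\cup P_{b^\infty})$ with $L_K(F)$ for $F$ the (generalized hedgehog graph of the) hereditary saturated closure $\overline{P_{ec}\cup P_{b^\infty}}$, and then quotes two known results --- primitive minimal idempotents lie in the socle $I(P_l^F)$ by \cite[Theorem 5.2]{AMMS2}, and primitive non-minimal idempotents lie in $I(P_c^F)$ by \cite[Corollary 6.10]{ABS} --- so that everything reduces to checking that $F$ has no line points and no cycles without exits. You instead re-derive a version of those two citations from scratch (reduce a primitive idempotent to a vertex, then split the vertex via (CK2) at a bifurcation or infinite emitter). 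The reduction step is itself a gap as written: ``every idempotent is equivalent to a sum of vertices'' is the Ara--Moreno--Pardo description of $V(L_K(E))$ combined with the refinement property of that monoid, and you would need to invoke it precisely (primitivity is preserved under equivalence, and refinement forces a primitive class to equal a single $[v]$). That part is repairable, and the vertex-splitting itself is fine.

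The step that actually fails is the exclusion lemma, which you yourself flag as ``the real content.'' You test whether a vertex $u$ on a cycle without exits (or a line point) can satisfy $u\ge x$ for some $x\in P_{ec}\cup P_{b^\infty}$; but membership of $u$ in the hereditary closure $T(P_{ec}\cup P_{b^\infty})$ means the \emph{opposite} relation $x\ge u$. For $x\in P_{ec}$ the exclusion survives the correction, since the extreme-cycle property forces every descendant of $x$ to return to the cycle, so no descendant of $P_{ec}$ is a line point or lies on an exit-free cycle. For $x\in P_{b^\infty}$ it does not survive: an infinite emitter may perfectly well have sinks, line points, or exit-free cycles among its descendants. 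The paper's own Examples~\ref{PbinfinityNotHereditary}(i) (the infinite clock) is a counterexample: $u\in P_{b^\infty}$ and each $v_i=e_i^*ue_i$ lies in $I(P_{b^\infty})$ and is a sink, hence a minimal --- in particular primitive --- idempotent. So your exclusion claim cannot be proved as stated, and without it the splitting argument has nothing to act on at such vertices. You should also be aware that the paper's proof is exposed to exactly the same example: it asserts without argument that $L_K(F)$ has neither line points nor cycles without exits, which fails for the clock graph. The obstruction you hit is therefore not an artifact of your route; it points at the statement itself as literally formulated, and any repair (e.g.\ replacing $P_{b^\infty}$ by a suitable hereditary variant) has to happen at the level of the lemma, not merely inside your proof.
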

\begin{proof}
The graded ideal $I(P_{ec} \cup P_{b^{\infty}})$ is  $K$-algebra isomorphic to a Leavitt path algebra, by  \cite[Corollary 2.5.23]{AAS}; concretely, to the Leavitt path algebra whose underlying graph is $F:= \overline{P_{ec} \cup P_{b^{\infty}}}$ (see the Structure Theorem for graded ideals,  \cite[Theorem 2.5.8]{AAS}). 

The primitive idempotents of the Leavitt path algebra $L_K(F)$ are in the ideal generated by $P^F_l\cup P^F_c$ because the primitive minimal are in the socle of the Leavitt path algebra, which is the ideal generated by $P^F_l$, by \cite[Theorem 5.2]{AMMS2} and the primitive non-minimal are in $I(P^F_c)$, by \cite[Corollary 6.10]{ABS}. Since $L_K(F)$ has neither line points nor cycles without exits, it has no primitive idempotents.
\qedhere
\end{proof}

\begin{proposition}\label{Proposition:ecpbinfntyinvariant} 
Let $E$ be an arbitrary graph. Then the ideal $I(P_{ec} \cup P_{b^{\infty}})$ is invariant under any ring isomorphism.
\end{proposition}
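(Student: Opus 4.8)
The plan is to characterize the ideal $I(P_{ec}\cup P_{b^\infty})$ intrinsically in terms of the ring structure of $L_K(E)$, so that its invariance under any ring isomorphism becomes automatic. A natural candidate, suggested by Lemma \ref{Lemma:primitiveidempotents}, is that $I(P_{ec}\cup P_{b^\infty})$ is the largest ideal of $L_K(E)$ containing no primitive idempotent. If this characterization holds, then for any ring isomorphism $\varphi\colon L_K(E)\to L_K(E)$ (or more generally onto another Leavitt path algebra), $\varphi$ sends primitive idempotents to primitive idempotents (primitivity is a purely ring-theoretic notion), hence sends an ideal without primitive idempotents to another such ideal, and therefore sends the largest such ideal to the largest such ideal; invariance follows.

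So the key step will be to prove the two inclusions making up this characterization. One direction is exactly Lemma \ref{Lemma:primitiveidempotents}: $I(P_{ec}\cup P_{b^\infty})$ contains no primitive idempotent. For the other direction, let $J$ be any ideal of $L_K(E)$ with no primitive idempotent; I want to show $J\subseteq I(P_{ec}\cup P_{b^\infty})$. Here I would first pass to the graded ideal generated by $J\cap E^0$ (or, if $J$ need not be graded, use that the grading data is detectable — but it is cleaner to reduce to the hereditary saturated set $H = \{v\in E^0 : v\in J\}$, noting $I(H)\subseteq J$). Actually the cleanest route: since every vertex in $H$ lies in $J$ and $J$ has no primitive idempotent, no $v\in H$ can be a line point (line points generate the socle and are primitive minimal idempotents, \cite[Theorem 5.2]{AMMS2}) and no $v\in H$ can lie on a cycle without exits (such vertices give primitive non-minimal idempotents, \cite[Corollary 6.10]{ABS}). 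Then I must argue that a vertex $v$ that is neither in $P_l$, nor in $P_c$, nor in $P_{b^\infty}$, and whose whole subtree $T(v)$ avoids $P_l\cup P_c$ as well, must lie in $P_{ec}$ — this is the content one extracts from the finite-graph dichotomy \cite[Lemma 3.7.10]{AAS} combined with the argument already used in the proof of Lemma \ref{Lemma:E^0connectsto}. Since $H$ is hereditary and saturated and $H\subseteq P_l\cup P_c\cup P_{ec}\cup P_{b^\infty}$ with the $P_l,P_c$ parts excluded, one concludes $H\subseteq \overline{P_{ec}\cup P_{b^\infty}}$, whence $I(H)\subseteq I(P_{ec}\cup P_{b^\infty})$.

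The remaining gap is handling the possibly non-graded part of $J$: an ideal without primitive idempotents could a priori contain non-graded elements not captured by $J\cap E^0$. To close this I would invoke the Structure Theorem for ideals of Leavitt path algebras: any ideal $J$ is built from a graded piece $I(H)$ (with $H = J\cap E^0$) together with a contribution indexed by certain cycles $c$ with $H\cup B_H$-breaking data and a polynomial $p_c(x)$; but the presence of such a cycle $c$ lying outside $H$ forces $J$ to contain, after quotienting by $I(H)$, a vertex on a cycle without exits, hence a primitive idempotent — contradiction. So $J = I(H)$ is graded, and we are back in the previous paragraph. I expect this reduction to be the main obstacle: making precise, with correct citations, that the non-graded polynomial data in an ideal always produces a primitive idempotent is the delicate point, whereas the vertex-level combinatorics is a direct reprise of Lemma \ref{Lemma:E^0connectsto} and Lemma \ref{Lemma:primitiveidempotents}.
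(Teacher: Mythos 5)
Your strategy hinges on the claim that $I(P_{ec}\cup P_{b^{\infty}})$ is the largest ideal of $L_K(E)$ containing no primitive idempotent, and that claim is false. A first counterexample: let $E$ be one vertex with one loop, so $L_K(E)\cong K[x,x^{-1}]$. Here $P_{ec}=P_{b^{\infty}}=\emptyset$, so $I(P_{ec}\cup P_{b^{\infty}})=0$, yet the non-graded ideal $(1+x)K[x,x^{-1}]$ is nonzero and contains no nonzero idempotent at all. This also exposes the error in your reduction to the graded case: the quotient $J/I(H\cup S^H)\cong\bigoplus_{c}M_{\Lambda_c}\bigl(p_c(x)K[x,x^{-1}]\bigr)$ contains \emph{no} nonzero idempotents (each $p_c$ is a non-unit, and a nonzero idempotent matrix over $p_cK[x,x^{-1}]$ would have all entries in $\bigcap_k p_c^{\,k}K[x,x^{-1}]=0$), and the vertices of such a cycle $c$ never lie in $J$; so the non-graded data does not produce a primitive idempotent, and the reduction fails. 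Worse, the characterization fails even among graded ideals generated by idempotents: take $E^0=\{v,w\}$ with a loop at $v$, an edge from $v$ to $w$, and two loops at $w$. Then $P_l=P_c=P_{b^{\infty}}=\emptyset$ and $P_{ec}=\{w\}$ with $\overline{\{w\}}=\{w\}$, so $I(P_{ec}\cup P_{b^{\infty}})=I(w)\subsetneq L_K(E)$; but $L_K(E)$ itself is an ideal with no primitive idempotents, by the very citations used in Lemma \ref{Lemma:primitiveidempotents} (they all lie in $I(P_l\cup P_c)=0$). The same example refutes your final combinatorial step: the vertex $v$ avoids $P_l\cup P_c$ throughout its tree, yet it lies neither in $P_{ec}$ nor in $\overline{P_{ec}\cup P_{b^{\infty}}}$ --- connecting to $P_{ec}\cup P_{b^{\infty}}$ is strictly weaker than lying in its hereditary saturated closure.

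The paper does not attempt an intrinsic characterization among all ideals. It applies $\varphi$ first: since $I(P_{ec}^E\cup P_{b^{\infty}}^E)$ is generated by idempotents, its image is generated by idempotents, hence graded, hence of the form $I(H)$ for a hereditary saturated $H$ in the target graph; then each $v\in H$ is examined via Lemma \ref{Lemma:E^0connectsto}, the options $P_l$ and $P_c$ being excluded because a primitive idempotent of $I(H)$ would pull back to one in $I(P_{ec}^E\cup P_{b^{\infty}}^E)$, contradicting Lemma \ref{Lemma:primitiveidempotents}. Your Lemma-\ref{Lemma:primitiveidempotents}-based instinct is the right ingredient, but it must be used on the image of the specific ideal rather than promoted to a maximality property that does not hold.
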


\begin{proof} Assume that $E$ and $F$ are arbitrary graphs and that $\varphi: L_K(E) \rightarrow L_K(F)$ is a ring isomorphism. Note that $I(P_{ec}^E \cup P_{b^{\infty}}^E)$ is generated by idempotents. Since any isomorphism sends idempotents to idempotents, by \cite[Corollary 2.9.11]{AAS}, the ideal $\varphi (I(P_{ec}^E \cup P_{b^{\infty}}^E))$ is  graded. This means that there exists a hereditary saturated set $H$ in $F$ such that $\varphi (I(P_{ec}^E \cup P_{b^{\infty}}^E))= I(H)$ by \cite[Theorem 2.4.8]{AAS}.

Take $v \in H$. By Lemma \ref{Lemma:E^0connectsto}, $v$ connects to a line point, to a cycle without exits, to an extreme cycle or to a vertex whose tree has infinite bifurcations. We are going to show that $v$ can connect neither to a line point nor to a cycle without exits.

If $v$ connects to a line point $w$ then $w \in H$ and $T(w)$ does not have any bifurcations, so $w$ is a primitive idempotent by \cite[Proposition 5.3]{ABS}. Similarly, if $v$ connects to a cycle $c$ without exits, then $c^0 \subseteq H$ and again $H$ contains a primitive idempotent. In both cases, since primitive idempotents are preserved by isomorphisms, $I(P_{ec}^E \cup P_{b^{\infty}}^E)$ contains primitive idempotents but this is a contradiction to Lemma \ref{Lemma:primitiveidempotents}. Hence, $v$ connects either to an extreme cycle or to a vertex whose tree has infinite bifurcations. Assume $v$ connects to a vertex $u$ such that $T(u)$ has infinite bifurcations. Clearly $v \in P_{b^{\infty}}^F$, which means $v \in I(P_{ec}^F \cup P_{b^{\infty}}^F)$.

Suppose that $v$ connects to an extreme cycle. We distinguish the following two cases:

Case 1: There is path $\mu$ starting at $v$ and ending at a vertex of an extreme cycle $c'$, and ${\mu}^0$ contains an infinite emitter $u$. Then $v$ is in $P_{b^{\infty}}^F$.

Case 2: All the paths from $v$ to any extreme cycle contain only regular vertices. Then by (CK2) relation, $v$ is in the ideal $I(P_{ec}^F)$.

Hence $v \in I(P_{ec}^F \cup P_{b^{\infty}}^F)$ and $\varphi (I(P_{ec}^E \cup P_{b^{\infty}}^E)) \subseteq I(P_{ec}^F \cup P_{b^{\infty}}^F)$. Reasoning in the same way with $\varphi^{-1}$ we get  $\varphi^{-1} (I(P_{ec}^F \cup P_{b^{\infty}}^F)) \subseteq I(P_{ec}^E \cup P_{b^{\infty}}^E)$ implying $\varphi (I(P_{ec}^E \cup P_{b^{\infty}}^E)) = I(P_{ec}^F \cup P_{b^{\infty}}^F)$.
\end{proof}


\section{The largest locally artinian and locally noetherian ideals of a Leavitt path algebra}\label{Sec:largestlocart&noeth}

To start the picture about  largest ideals generated by the sets of vertices in $P_{lec}^E$, for $E$ an arbitrary graph, we show that there exists a largest semisimple ideal in $L_K(E)$, which is generated by the line points, and a largest locally noetherian ideal, which is generated by vertices in cycles without exits. The notions studied in this section are the following: we say that a ring $R$ is \emph{locally left artinian (resp., locally left
noetherian)} if for any finite subset $X$ of $R$ there exists an idempotent $e \in R$ such that
$X \subseteq eRe$, and $eRe$ is left artinian (resp., left noetherian).

The first statement follows from a general fact that maybe is well-known; we include here because we don't know a concrete reference. 

Recall that for a (non necessarily unital) ring $R$ the \emph{left socle} is defined to be the sum of the minimal left ideals of $R$, while the \emph{right socle} is the sum of the minimal right ideals of $R$. If there are no minimal left (right) ideals, then the left (right) socle is said to be zero. When $R$ is a semiprime ring (i.e., it has no nonzero nilpotent ideals), then the left and the right socle coincide and this ideal is called the \emph{socle} of $R$, denoted ${\rm Soc}(R)$. A left (right) ideal of $R$ will be called \emph{semisimple} if it is semisimple as a left (right) $R$-module, i.e., if $I$ is the sum of simple left (right) $R$-modules.

\begin{proposition}\label{prop:BigSemis}
Let $R$ be a semiprime ring. Then the socle is the largest semisimple left (and right) ideal of $R$.
\end{proposition}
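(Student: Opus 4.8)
The plan is to show two things: first, that the socle $\operatorname{Soc}(R)$ is itself a semisimple left ideal; second, that any semisimple left ideal $I$ of $R$ is contained in $\operatorname{Soc}(R)$. Taken together these give that $\operatorname{Soc}(R)$ is the largest semisimple left ideal, and the right-hand version follows by the left-right symmetry of the socle in a semiprime ring (recalled in the excerpt). For the first point, I would recall that by definition $\operatorname{Soc}(R)$ is the sum of the minimal left ideals of $R$; each minimal left ideal is a simple left $R$-module, so $\operatorname{Soc}(R)$ is a sum of simple left $R$-modules and hence semisimple as a left $R$-module. (If there are no minimal left ideals, $\operatorname{Soc}(R) = 0$, which is trivially the largest semisimple left ideal since then every left ideal — being semiprime — can contain no minimal left ideal, so the only semisimple left ideal is $0$; I would state this degenerate case explicitly to be safe.)

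For the second point, let $I$ be a left ideal of $R$ that is semisimple as a left $R$-module, so $I = \bigoplus_{\alpha} S_\alpha$ with each $S_\alpha$ a simple left $R$-submodule of $I$. Each $S_\alpha$ is in particular a left ideal of $R$ (a left $R$-submodule of $R$), and being simple as a module it is a minimal left ideal of $R$ — unless $R S_\alpha = 0$, in which case $S_\alpha$ would be a square-zero, hence nilpotent, left ideal contributing a nonzero nilpotent ideal $S_\alpha R + S_\alpha$ (or simply: $S_\alpha$ generates a nilpotent two-sided ideal), contradicting semiprimeness; so $R S_\alpha \neq 0$ and $S_\alpha = R S_\alpha$ really is a minimal left ideal. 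Hence each $S_\alpha \subseteq \operatorname{Soc}(R)$, and therefore $I = \sum_\alpha S_\alpha \subseteq \operatorname{Soc}(R)$. This is where semiprimeness is essential: without it a semisimple left ideal could sit on top of a "trivial" simple module annihilated by $R$, which need not lie in the socle as classically defined.

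The main obstacle, and the only genuinely non-formal step, is precisely the verification that a simple left $R$-submodule $S$ of $R$ is a minimal left ideal; the subtlety is ruling out $RS = 0$. I would handle this by the standard semiprime argument: if $S^2 = 0$ (which happens in particular when $RS=0$, since then $SS \subseteq RS = 0$), then $S$ lies in a nonzero nilpotent left ideal, and a routine check shows the two-sided ideal it generates is also nilpotent, contradicting the hypothesis that $R$ has no nonzero nilpotent ideals. Once that point is settled, both inclusions are immediate and the statement for right ideals follows since $\operatorname{Soc}(R)$ is two-sided and left-right symmetric in the semiprime case. I would also note that "largest" is meant in the sense of containment: $\operatorname{Soc}(R)$ is semisimple and contains every semisimple left ideal, so it is the unique maximal element of that family and in particular contains their sum.
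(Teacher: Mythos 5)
Your argument is correct and follows essentially the same route as the paper: decompose a semisimple left ideal into simple (hence minimal) left ideals and observe that each lies in the socle, which is by definition the sum of all minimal left ideals. The paper packages the semiprimeness step via Brauer's lemma (each simple summand is $Re_i$ for a minimal idempotent $e_i$), whereas you use semiprimeness to rule out square-zero simple summands directly; these are the same idea in slightly different clothing.
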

\begin{proof}
Denote by $S$ the socle of $R$ and let $I$ be a semisimple left ideal. Then $I$ is a direct sum of simple left ideals of $R$, say $I=\oplus_{i\in \Lambda}I_i$. Since $R$ is semiprime, $I_i=Re_i$, being $e_i$ an idempotent in $I$ which is minimal, i.e., $e_iRe_i$ is a division ring. Apply that the socle is the sum of all minimal ideals to get that $I$ must be contained in $S$, as required. 
\end{proof}

\begin{theorem}\label{thm:BigSoc}
Let $E$ be an arbitrary graph and let $K$ be a field. Then  $I(P_l)$ is the largest semisimple left and right ideal of $L_K(E)$. It is also the largest locally left and right artinian ideal of the Leavitt path algebra.
\end{theorem}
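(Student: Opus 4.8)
The plan is to reduce both statements to two facts recalled in the introduction: that $I(P_l)$ coincides with the socle of $L_K(E)$ and, as such, is isomorphic to a direct sum of matrix rings over $K$; and that $L_K(E)$ is semiprime. The first sentence of the Theorem is then immediate: since $I(P_l)=\mathrm{Soc}(L_K(E))$ and $L_K(E)$ is semiprime, Proposition~\ref{prop:BigSemis} applies and gives that $I(P_l)$ is the largest semisimple left (and right) ideal.

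For the assertion about local artinianity I would prove two inclusions. First, that $I(P_l)$ is itself locally left and right artinian: writing $\mathrm{Soc}(L_K(E))\cong\bigoplus_{\gamma}M_{\Lambda_\gamma}(K)$, where each $M_{\Lambda_\gamma}(K)$ consists of matrices with only finitely many nonzero entries, any finite subset $X$ of it involves only finitely many blocks and, in each block, finitely many indices; taking $e$ to be the (finite) sum of the corresponding diagonal matrix units one gets $X\subseteq e\,I(P_l)\,e$ and $e\,I(P_l)\,e$ is a finite direct sum of finite matrix algebras over $K$, hence finite dimensional, hence left and right artinian. Second, that every locally left artinian ideal $J$ of $L_K(E)$ is contained in $I(P_l)$: given $x\in J$, apply the definition to $X=\{x\}$ to obtain an idempotent $e\in J$ with $x\in eJe$ and $eJe$ left artinian; since $J$ is a two-sided ideal and $e\in J$ one has $eJe=eL_K(E)e$, so $eL_K(E)e$ is left artinian. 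At this point I would invoke the ring-theoretic lemma that does the real work: if $R$ is a semiprime ring and $e=e^2\in R$ is such that $eRe$ is left artinian, then $e\in\mathrm{Soc}(R)$. A corner of a semiprime ring is semiprime, so $eRe$ is semiprime and left artinian, hence semisimple artinian; writing its identity $e$ as a sum of orthogonal idempotents $g_{ij}$ with each $g_{ij}Rg_{ij}=g_{ij}(eRe)g_{ij}$ a division ring, the standard fact that over a semiprime ring $fRf$ being a division ring forces $Rf$ to be a minimal left ideal yields $Rg_{ij}\subseteq\mathrm{Soc}(R)$, hence $Re\subseteq\mathrm{Soc}(R)$, i.e. $e\in\mathrm{Soc}(R)$. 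Applying this to our $e$ and using that $\mathrm{Soc}(L_K(E))=I(P_l)$ is an ideal gives $x\in eL_K(E)e\subseteq I(P_l)$, so $J\subseteq I(P_l)$. The right-handed versions follow by the symmetric argument, since the left and right socles of a semiprime ring coincide.

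The step I expect to be delicate is this last lemma — or rather, resisting a tempting wrong turn: one might try to argue by passing to the quotient $L_K(E)/I(P_l)$, but killing the socle can \emph{create} new line points, so one quotient is not enough, and the argument genuinely has to be carried out inside the corner $eL_K(E)e$, where semiprimeness is exactly what upgrades ``left artinian'' to ``semisimple artinian''. The remaining points to watch are bookkeeping: $L_K(E)$ is not unital, so one works throughout with the local-units formulation of all the notions involved (precisely as Proposition~\ref{prop:BigSemis} and the description of the socle are stated); and the socle is a direct \emph{sum}, not a direct product, of finitary matrix algebras, which is exactly why $I(P_l)$ is locally artinian but in general fails to be artinian.
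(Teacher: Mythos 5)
Your proof is correct and follows essentially the same route as the paper's: the paper's entire proof is to invoke Proposition~\ref{prop:BigSemis} together with the cited facts that $L_K(E)$ is semiprime and that $\mathrm{Soc}(L_K(E))=I(P_l)$ is a direct sum of (finitary) matrix rings over $K$. The corner argument you supply for the locally artinian statement --- semiprimeness upgrading the left artinian corner $eL_K(E)e$ to a semisimple artinian one, whence $e\in\mathrm{Soc}(L_K(E))$ --- is exactly the content the paper leaves implicit in those citations, and you carry it out correctly.
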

\begin{proof}
Apply Proposition \ref{prop:BigSemis}, \cite[Proposition 2.3.1]{AAS} and \cite[Theorem 2.6.14]{AAS}.
\end{proof}

Our next goal is to show that the ideal generated by the set of line points jointly with the vertices which lie in cycles without exits is the largest left/right locally noetherian ideal of a Leavitt path algebra. As a result we will obtain that the ideal generated by $P_c$ is the largest locally left/right noetherian ideal not having minimal idempotents.

One of the key points in the proof will be the Structure Theorem for graded ideals in a Leavitt path algebra, which is proved in \cite[Theorem 2.5.8]{AAS}. Here we include some of the notions involved in this result. 

Let $E$ be an arbitrary graph and $K$ any field. Given a hereditary subset $H$ of $E^0$ and a vertex $v\in E^0$, we say that $v$ is a \emph{breaking vertex of} $H$ if $v$ is in the set
$$B_H:= \{v \in E^0\setminus H \ \vert \ v \in {\rm Inf}(E)\ \text{and} \
\vert s^{-1}(v) \cap r^{-1}(E^0\setminus H)\vert < \infty\}.$$

In other words, $B_H$ consists of those vertices of $E$ which are infinite emitters, which are not in $H$, and for which the ranges of the edges they emit are all, except for a finite (and nonzero) number, inside $H$ (see \cite[Definitions 2.4.4]{AAS}). For $v \in B_H$, recall that the element $v^H$ of $L_K(E)$ is:
$$v^H:= v- \sum_{e \in s^{-1}(v) \cap r^{-1}(E^0\setminus H)}ee^*$$
For any subset $S \subseteq B_H$, define $S^H:= \{ v^H \ | \  v \in S\} \subseteq L_K(E)$.

Also we need to recall here the definition of the generalized hedgehog graph of a hereditary set (\cite[Definition 2.5.20]{AAS}). Let $H$ be a hereditary subset of $E^0$ and $S \subseteq B_H$. Define the \textit{generalized hedgehog graph of} $H$ as follows:
$$
F_1(H,S):=\{\alpha=e_1\cdots e_n \in \rm{Path}(E) \ \vert \ r(e_n)\in H ;\ s(e_n)\notin H \cup S\}, and $$
$$F_2(H,S)=\{\alpha=e_1\cdots e_n \in \rm{Path}(E) \ \vert \ n \geq 1; \ r(e_n)\in S\} .$$
For $i=1,2$, denote by $\overline{F}_{i}(H,S)$ another copy of $F_{i}(H,S)$; for any $\alpha\in F_{i}(H,S)$ we will write $\overline{\alpha}$ to refer copy of $\alpha$ in $\overline{F}_{i}(H,S)$. Define a new graph $_{(H,S)}E=((_{(H,S)}E)^0,(_{(H,S)}E)^1,s',r')$ as follows: $$(_{(H,S)}E)^0=H\cup S \cup F_{1}(H,S) \cup F_{2}(H,S) \text{ and} $$
$$(_{(H,S)}E)^1=\{e\in E^1 \mid s(e)\in H\} \cup \{e\in E^1 \mid s(e)\in S; \ r(e) \in H\}\cup \overline{F}_{1}(H,S) \cup \overline{F}_{2}(H,S).$$ 

The source and range maps $s'$ and $r'$ are defined by extending $r$ and $s$ to $_{(H,S)}E^1$ and by setting $s'(\overline{\alpha})=\alpha$ and $r'(\overline{\alpha})=r(\alpha)$ for all $\overline{\alpha}\in \overline{F}_{i}(H,S)$ for $i=1,2$. In the particular case in which $S= \emptyset$, we have that $F_2(H,\emptyset) = \emptyset$ and $_{(H,\emptyset)}E = _HE$ given in \cite[Definition 2.5.16]{AAS}.

\begin{theorem}\label{Them:LargestLLN} Let $E$ be an arbitrary graph and let $K$ be any field. Then 
$I(P_l\sqcup P_c)$ is the largest locally left (right) noetherian ideal of $L_K(E)$.
\end{theorem}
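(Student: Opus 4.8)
Here is my plan for proving that $I(P_l \sqcup P_c)$ is the largest locally left (right) noetherian ideal of $L_K(E)$.

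\medskip

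\textbf{Step 1: $I(P_l \sqcup P_c)$ is locally left/right noetherian.} First I would verify that the set $P_l \sqcup P_c$ is hereditary and saturated (heredity is recalled in the excerpt; saturation should be checked or extracted from the literature). The graded ideal $I(P_l \sqcup P_c)$ is then $K$-algebra isomorphic to $L_K(F)$ where $F = {}_H E$ (with $H = \overline{P_l \sqcup P_c} = P_l \sqcup P_c$, since no breaking vertices arise for this $H$ because $P_l \sqcup P_c$ contains no infinite emitters in its tree), by the Structure Theorem \cite[Theorem 2.5.8]{AAS} together with \cite[Corollary 2.5.23]{AAS}. I would then argue that $L_K(F)$ is a direct sum of matrix rings (of arbitrary, possibly infinite size but with row-finite column support) over $K$ and over $K[x,x^{-1}]$ — the line-point part giving the $K$ summands (this is the socle description of \cite{AMMS1, AMMS2}) and the cycles-without-exits part giving the $K[x,x^{-1}]$ summands (by \cite{ABS, AAPS, CMMSS}). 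Each such matrix ring is a directed union of corners $eL_K(F)e$ that are finite matrix rings over $K$ or $K[x,x^{-1}]$, hence left and right noetherian; collecting finitely many such corners into a single idempotent shows $L_K(F)$, and therefore $I(P_l \sqcup P_c)$, is locally left and right noetherian.

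\medskip

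\textbf{Step 2: Maximality.} Let $J$ be any locally left noetherian ideal of $L_K(E)$. The goal is to show $J \subseteq I(P_l \sqcup P_c)$. The natural strategy is to reduce to the graded case and then to a statement about vertices. First I would show that a locally noetherian ideal must be graded: this should follow from the fact that $L_K(E)$ is locally noetherian as a module over itself on $J$ is incompatible with $J$ containing a "bad" vertex. Concretely, I expect to use that if $J$ is not contained in $I(P_l \sqcup P_c)$, then (using Lemma \ref{Lemma:E^0connectsto} and the heredity/saturation machinery, and the fact that ideals of $L_K(E)$ are "detected" by vertices and breaking-vertex elements via \cite[Theorem 2.8.10]{AAS} or similar) $J$ would have to meet a vertex $v$ that connects to an extreme cycle or to a vertex in $P_{b^\infty}$. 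I would then derive a contradiction: a corner $eL_K(E)e$ containing such a $v$ (or enough of the ideal it generates) cannot be left noetherian. For the extreme-cycle case, a corner containing a vertex on an extreme cycle contains (an isomorphic copy inside, or a quotient that is) a purely infinite simple algebra, which is not noetherian (it contains an infinite direct sum of isomorphic left ideals since $v \oplus v \cong v$ as left modules over such a corner). For the $P_{b^\infty}$ case, the infinitely many bifurcations (or an infinite emitter) produce a strictly ascending chain of left ideals inside any corner containing that vertex — explicitly, if $v$ is an infinite emitter one builds $L e_1 e_1^* \subsetneq L(e_1 e_1^* + e_2 e_2^*) \subsetneq \cdots$, and infinitely many bifurcations down the tree give an analogous chain after passing to a corner. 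Either way, no idempotent $e$ with $v \in eL_K(E)e$ can have $eL_K(E)e$ left noetherian, contradicting local noetherianity of $J$.

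\medskip

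\textbf{Step 3: The right-handed version and assembly.} By the symmetry of Leavitt path algebras under the involution $*$ (which swaps left and right modules), the identical argument gives that $I(P_l \sqcup P_c)$ is the largest locally right noetherian ideal, and in fact the argument in Step 2 shows a locally left noetherian ideal and a locally right noetherian ideal are each contained in $I(P_l \sqcup P_c)$, which is both. One should also note as a consequence (promised in the surrounding text) that removing the line-point summands, $I(P_c)$ is the largest locally noetherian ideal with no minimal idempotents, since the minimal idempotents of $I(P_l \sqcup P_c)$ are exactly those in $I(P_l)$ (the socle).

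\medskip

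\textbf{Main obstacle.} The delicate part is Step 2: showing that a locally left noetherian ideal cannot "reach" a vertex on an extreme cycle or in $P_{b^\infty}$, and in particular handling the bookkeeping of which corner $eL_K(E)e$ must contain such a vertex and why that corner fails to be noetherian. The extreme-cycle obstruction really wants the purely-infinite/infinite-repetition phenomenon, and one must be careful that passing to a corner genuinely preserves the relevant non-noetherian chain (rather than, say, the chain collapsing). I expect the cleanest route is: reduce $J$ to its graded part (or show it is already graded), identify the corresponding hereditary saturated set $H_J$ and show $H_J \subseteq P_l \sqcup P_c$ by ruling out vertices connecting to $P_{ec} \cup P_{b^\infty}$ using an explicit non-stabilizing ascending chain inside a corner, and then conclude.
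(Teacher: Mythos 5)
Your Step 1 is essentially the paper's (the authors simply cite the relevant results from \cite{AAS} on the socle and on $I(P_c)$), but Step 2 has two genuine problems. First, your reason for gradedness is off: the correct (and very short) argument, which the paper uses, is that the very definition of locally left noetherian forces every element of $J$ to lie in $eJe$ for some idempotent $e\in J$, so $J$ is generated by the idempotents it contains and is therefore graded; it has nothing to do with $J$ avoiding ``bad'' vertices. Second, and more seriously, your case analysis is based on a false dichotomy. You claim that a vertex of $H=J\cap E^0$ outside $\overline{P_l\cup P_c}$ must connect to an extreme cycle or to $P_{b^\infty}$. It need not: take the graph with a loop $c$ at $u$ and one extra edge from $u$ to a sink $w$. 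Then $u\notin \overline{P_l\cup P_c}=\{w\}$, but $u$ connects to no extreme cycle and $T(u)$ has only one bifurcation and no infinite emitter, so $u\notin P_{b^\infty}$ and $T(u)\cap(P_{ec}\cup P_{b^\infty})=\emptyset$. The obstruction you actually need is ``cycle with an exit'' (which makes $u$ an infinite idempotent, whence $uL_K(E)u$ is not left noetherian), not ``extreme cycle''; and even after excluding cycles with exits, infinite emitters and infinitely many bifurcations from $H$, you still owe an argument that such a hereditary saturated $H$ is contained in $\overline{P_l\cup P_c}$. That last implication is precisely the characterization of locally noetherian Leavitt path algebras, which is the external input the paper leans on: after showing $J$ is graded and ruling out the breaking-vertex part $S^H$ by an explicit non-stabilizing chain in the corner $v^HJv^H$ (your infinite-emitter chain is the right idea, but it must be applied to the elements $v^H$ with $v\in B_H\setminus H$, not only to infinite emitters inside $H$), the authors identify $J=I(H)\cong L_K({}_HE)$, invoke \cite[Theorem 4.2.12]{AAS} to get $L_K({}_HE)=I(P_l^{{}_HE}\sqcup P_c^{{}_HE})$, and then transfer line points and no-exit cycles of ${}_HE$ back into $I(P_l^E\sqcup P_c^E)$.

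If you want to keep your ``direct'' route, the fix is: replace ``connects to an extreme cycle or to $P_{b^\infty}$'' by ``lies on a cycle with an exit, or is an infinite emitter, or has infinitely many bifurcations in its tree'', prove non-noetherianity of the corresponding corner in each of these three cases (for a cycle $c$ with an exit based at $v$ use the strictly increasing chain built from the orthogonal idempotents $v-cc^*,\ cc^*-c^2(c^2)^*,\dots$), and then supply the combinatorial argument that a hereditary saturated set avoiding all three phenomena sits inside $\overline{P_l\cup P_c}$. As written, the proposal does not close the argument.
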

\begin{proof}
By \cite[Corolary 2.7.5 (i)]{AAS}, \cite[Lemma 4.2.2 (ii)]{AAS} and \cite[Lemma 4.2.4]{AAS} we have that $I(P_l\sqcup P_c$) is locally left noetherian.
Now we prove that it is the largest locally left (right) noetherian ideal.
Let $I$ be an ideal of $L_K(E)$ which is locally left noetherian. By the definition of left locally noetherian, $I$ is generated as an ideal by the idempotents it contains, so it is a graded ideal. By the Structure Theorem of graded ideals \cite[Theorem 2.5.8]{AAS}, $I=I(H\cup S^H)$ for $H=I\cap E^0$. 

Next we claim that $I$ does not contain elements of the form $v^H$, for $v$  a breaking vertex. Assume on the contrary $v^H\in I$ and take  an infinite subset $\{f_i\;|\;i\in\mathbb{N}\} \subseteq s^{-1}(v)\cap r^{-1}(H)$. Then we have the following increasing chain inside $v^{H}Iv^{H}$: \[v^{H}L_{K}(E)f_1f_1^*v^{H}\subsetneq v^{H}(L_K(E)f_1f_1^*\oplus  L_K(E)f_2f_2^*)v^{H}\subsetneq\cdots\]
This is a contradiction because $v^{H}Iv^{H}$ is a left noetherian algebra (every corner of a locally left noetherian algebra is left noetherian).

Then we know that $I=I(H)$ and, by \cite[Theorem 2.5.19]{AAS}, we have $I(H)\cong L_K(_HE)$ which is locally left noetherian. We know that $L_K(_HE)=I(P_l^{_HE}\sqcup P_c^{_HE})$ by \cite[Theorem 4.2.12]{AAS}. We claim that $P_l^{_HE}\sqcup P_c^{_HE}$ can be seen inside $I(P_l^E\sqcup P_c^E)$. Indeed, if $p\in P_l^{_HE}$, then $p$ is a line point in $H$ or $p$ comes from a path in $E$ ending at a vertex in $H$ which is a line point in $E^0$; by abuse of notation we denote this path by $p$. Then  $p \in I(P_l^{E})$. On the other hand, every cycle without exits in $_HE$ comes from a cycle without exits in $E$; this means that we may assume $P_c^{_HE}\subseteq P_c^E$ (understanding the containment as a graph homomorphism as defined in \cite[Definition 1.6.1]{AAS}). This shows $I\subseteq I(P_l^{E}\sqcup P_c^{E})$ as required. \qedhere
\end{proof}

\begin{corollary}
For an arbitrary graph $E$ and any field $K$, the ideal $I(P_c)$ is the largest locally left/right noetherian ideal not having minimal idempotents
\end{corollary}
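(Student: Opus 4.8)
The plan is to deduce this from Theorem~\ref{Them:LargestLLN}, the known structure of $I(P_c)$, and elementary socle theory. Two things must be checked: that $I(P_c)$ is itself a locally left/right noetherian ideal without minimal idempotents, and that it contains every ideal enjoying these two properties.

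For the first point, recall from the Introduction that $I(P_c)$ is isomorphic to a direct sum of matrix rings over $K[x,x^{-1}]$ (\cite{ABS,AAPS,CMMSS}). Such a ring is locally left/right noetherian: a finite subset is contained in a corner that is a finite direct sum of matrix rings of finite size over $K[x,x^{-1}]$, hence noetherian. It also has no minimal idempotents: since $K[x,x^{-1}]$ is a principal ideal domain which is not a field, it has no minimal ideals, so neither do matrix rings over it nor direct sums thereof, i.e.\ $\mathrm{Soc}(I(P_c))=0$; as a minimal idempotent would generate a minimal one-sided ideal, there are none.

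For the second point, let $I$ be a locally left noetherian ideal of $L_K(E)$ with no minimal idempotents; the aim is $I\subseteq I(P_c)$. By Theorem~\ref{Them:LargestLLN}, $I\subseteq I(P_l\sqcup P_c)$, and --- arguing exactly as in the proof of that theorem --- $I$ is graded, contains no element $v^H$ with $v$ a breaking vertex, so $I=I(H)$ with $H=I\cap E^0$, and $I\cong L_K({}_HE)$ with $L_K({}_HE)=I(P_l^{{}_HE}\sqcup P_c^{{}_HE})$ by \cite[Theorem 2.5.19]{AAS} and \cite[Theorem 4.2.12]{AAS}. Now I claim $P_l^{{}_HE}=\emptyset$. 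If $p$ were a line point of ${}_HE$, the absence of bifurcations and cycles in $T(p)$ forces $p\,L_K({}_HE)\,p=Kp$ (a short computation with the relations (CK1)--(CK2)), so $p$ is a minimal idempotent of $L_K({}_HE)$; transporting it through the isomorphism $L_K({}_HE)\cong I$ gives an idempotent $e\in I$ with $eIe$ a division ring. Since $I$ is a two-sided ideal and $e\in I$, for every $x\in L_K(E)$ we have $exe\in I$, whence $exe=e(exe)e\in eIe$; that is, $eL_K(E)e=eIe$ is a division ring, so $e$ is a minimal idempotent of $L_K(E)$ lying in $I$ --- a contradiction. Hence $P_l^{{}_HE}=\emptyset$ and $L_K({}_HE)=I(P_c^{{}_HE})$. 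Finally, as at the end of the proof of Theorem~\ref{Them:LargestLLN}, every cycle without exits in ${}_HE$ comes from a cycle without exits in $E$: since $H$ is hereditary, every edge of $E$ with source in $H$ has range in $H$, so the subgraph of ${}_HE$ supported on $H$ is the restriction graph $E_H$, and a cycle without exits there has no exit in $E$ at all. Thus $P_c^{{}_HE}\subseteq P_c^E$ and $I=I(H)\cong L_K({}_HE)=I(P_c^{{}_HE})\subseteq I(P_c^E)$, as desired.

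The step requiring the most care is the passage between ``$I$ has no minimal idempotents'' and ``${}_HE$ has no line points'': one must be certain that minimality of an idempotent $e\in I$ does not depend on whether it is measured in $I$ or in $L_K(E)$, which is exactly the identity $eL_K(E)e=eIe$ for idempotents of a two-sided ideal; and one must verify that $I(P_c)$ genuinely has no minimal idempotents, whose crux is the elementary fact that a non-field principal ideal domain has no minimal ideals. Everything else is bookkeeping on top of Theorem~\ref{Them:LargestLLN}.
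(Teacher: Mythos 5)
Your argument is correct and takes essentially the same route as the paper, whose proof is just the one-line instruction to combine Theorem~\ref{Them:LargestLLN} with the fact that every minimal idempotent lies in the socle $I(P_l)$. You have simply written out in full the details the paper leaves implicit --- in particular the identity $eL_K(E)e=eIe$ for an idempotent $e$ in a two-sided ideal $I$, which justifies passing between minimality measured in $I$ and in $L_K(E)$, and the verification that $I(P_c)$ itself has no minimal idempotents.
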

\begin{proof}
Apply Theorem \ref{Them:LargestLLN} and that every minimal idempotent is in the socle of $L_K(E)$, which is generated by the vertices in $P_l^E$ (see \cite[Theorem 2.6.14]{AAS}).
\end{proof}

\section{The largest purely infinite ideal of a Leavitt path algebra}
In this section we show that any purely infinite ideal in a Leavitt path algebra is graded and we find the largest purely infinite ideal of the algebra, which happens to be the ideal generated by the properly purely infinite vertices. 

We start by recalling the definition of purely infinite ring, that (without simplicity) was introduced in \cite[Definition 3.1]{AGPS}. A ring $R$ is said to be \textit{purely infinite} if the following conditions are satisfied:

\begin{enumerate}
    \item No quotient of $R$ is a division ring, and
    \item whenever $a \in R$ and $b \in RaR$, then $b=xay$ for some $x, y \in R$.
\end{enumerate}

A vertex $v$ in an arbitrary graph is called {\it properly infinite} if and only if there exist vertices $w_1, w_2,..., w_n$ in $T(v)$ such that $|{\rm CSP}(w_i)| \geq 2$ for all $i$ and
$v \in \overline{\{ w_1, w_2,..., w_n \}}$ (see \cite[Proposition 3.8.12]{AAS}). The set of properly infinite vertices of a Leavitt path algebra will be denoted by $P_{pi}$, or by $P_{pi}^E$ if we want to emphasize the graph we are considering.



Leavitt path algebras which are purely infinite can be characterized as those whose graph satisfies a nice property, as stated in \cite[Corollary 3.8.17]{AAS}: every vertex is properly infinite and there are no breaking vertices for any hereditary subset of vertices of the graph. This is the result that follows.

\begin{proposition}\label{prop:pilpa}
Let $E$ be an arbitrary graph and $K$ be any field. The following are equivalent:
\begin{enumerate} [\rm(i)]
	\item $L_{K}(E)$ is purely infinite.
	\item $B_H=\emptyset$ for all $H\in \mathcal{H}_E$, and every vertex is properly infinite.
\end{enumerate}
\end{proposition}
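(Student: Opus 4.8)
This proposition coincides with \cite[Corollary 3.8.17]{AAS} once one adopts the graph-theoretic description of properly infinite vertices from \cite[Proposition 3.8.12]{AAS} as the definition of $P_{pi}$; hence the shortest proof is a direct citation, and that is what I would ultimately write. I outline the strategy behind it. For $(ii)\Rightarrow(i)$, I would check the two defining conditions of a purely infinite ring. For condition (1), a division ring quotient of $L_K(E)$ is simple and unital, and tracing it through the Structure Theorem for graded ideals \cite[Theorem 2.5.8]{AAS} and the ideal--quotient-graph correspondence shows it can exist only if some quotient graph reduces to a sink or to a single cycle without exits; but a sink, and a vertex on a cycle without exits, are never properly infinite, so this is impossible under $(ii)$. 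For condition (2), given $0\neq a$ and $b\in L_K(E)\,a\,L_K(E)$, apply the standard reduction lemma of Leavitt path algebras to replace $a$ by a vertex $v$ in the ideal it generates; then unravel proper infiniteness of $v$, that is $v\in\overline{\{w_1,\dots,w_n\}}$ with $|\mathrm{CSP}(w_i)|\ge2$, to produce the required factorisation $b=xay$ by an explicit manipulation of the Cuntz--Krieger relations.

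For $(i)\Rightarrow(ii)$, I would use two facts about purely infinite rings recorded in \cite{AGPS}: the two defining conditions pass to quotients, and every corner $\epsilon R\epsilon$ of a purely infinite ring $R$ is again purely infinite; in particular a purely infinite Leavitt path algebra has no minimal idempotents (so zero socle) and no corner isomorphic to $K[x,x^{-1}]$. Consequently $L_K(E)/I(H)\cong L_K(E/H)$ is purely infinite for every $H$ in the lattice $\mathcal H_E$ of hereditary saturated sets. If some $H$ had a breaking vertex $v$, then its ``primed copy'' $v'$ is a sink of the quotient graph $E/H$, so $\mathrm{Soc}(L_K(E/H))\neq 0$, a contradiction. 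If some vertex $w$ were not properly infinite, then the analysis of \cite[Section 3.8]{AAS} produces $H\in\mathcal H_E$ for which the image of $w$ in $E/H$ is a line point or lies on a cycle without exits; in the first case $\mathrm{Soc}(L_K(E/H))\neq 0$, and in the second $L_K(E/H)$ has a corner isomorphic to $K[x,x^{-1}]$ (which surjects onto the division ring $K$), so $L_K(E/H)$ is not purely infinite --- again a contradiction. Hence $(ii)$ holds.

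I expect the delicate point to be the breaking-vertex part of $(i)\Rightarrow(ii)$: turning ``no corner of a purely infinite ring is too small'' into ``$B_H=\emptyset$ for all $H\in\mathcal H_E$'' is precisely the corner-and-quotient bookkeeping carried out in \cite{AAS} on the way to \cite[Corollary 3.8.17]{AAS}, and it is where the hypothesis on breaking vertices genuinely enters. By comparison, $(ii)\Rightarrow(i)$ is routine: condition (1) reduces to an inspection of one-vertex quotient graphs, and condition (2) is the standard reduction-to-a-vertex argument followed by an application of proper infiniteness.
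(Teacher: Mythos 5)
Your proposal matches the paper exactly: the paper offers no proof of this proposition, presenting it as a restatement of \cite[Corollary 3.8.17]{AAS} (modulo the graph-theoretic description of properly infinite vertices from \cite[Proposition 3.8.12]{AAS}), which is precisely the citation you give. Your additional outline of the underlying argument is a reasonable sketch of how that corollary is obtained in \cite{AAS}, but it is not needed for, and does not diverge from, what the paper does.
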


In order to determine the largest purely infinite ideal of a Leavitt path algebra, we first study which type of ideal it must be.
\medskip

\begin{proposition}\label{Proposition:piisgraded} Let $E$ be an arbitrary graph and $K$ any field. Then every purely infinite ideal $I$ of $L_K(E)$ is graded. Moreover, there exists a hereditary and saturated subset $H\subseteq E^0$ such that $I=I(H)$.
\end{proposition}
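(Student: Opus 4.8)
The plan is to reduce the problem to showing that the graded ideal generated by $H := I \cap E^0$ already contains every element of a "non-graded" generating datum, and then to invoke the Structure Theorem for ideals. Recall from \cite[Theorem~2.8.10]{AAS} that an arbitrary ideal $I$ of $L_K(E)$ has the form $I = I(H \cup S^H \cup P_C)$, where $H = I \cap E^0$ is hereditary and saturated, $S \subseteq B_H$, and $P_C$ is a set of elements of the form $f(c)$ built from cycles $c$ (with no exits in $E/H$) and polynomials $f \in K[x]$; the ideal is graded precisely when the $S^H$-part and the $P_C$-part are empty (equivalently, $I = I(H)$). So the whole content is to rule out the two non-graded ingredients using pure infiniteness.

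First I would dispose of the breaking-vertex contribution. Suppose some $v^H \in I$ with $v \in B_H$. By definition $v$ is an infinite emitter, so pick an infinite set $\{f_i \mid i \in \N\} \subseteq s^{-1}(v)$ of distinct edges whose ranges lie in $H$. Each $f_i f_i^*$ is a nonzero idempotent lying in $v^H L_K(E) v^H$ (since $r(f_i) \in H \subseteq$ "deleted" part means $v^H f_i = f_i$), and the orthogonal idempotents $f_1f_1^*, f_2f_2^*, \dots$ sit inside $v^H I v^H$. Now the corner $e I e$ at any idempotent $e \in I$ is again purely infinite (pure infiniteness passes to corners and to ideals, cf.\ \cite[Proposition~3.1.something]{AGPS} — this is the fact I would need to cite precisely). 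A purely infinite ring cannot contain an infinite family of nonzero orthogonal idempotents summing to an idempotent of the corner\dots\ actually the cleaner contradiction: in a purely infinite ring with local unit $e$, $e$ is an infinite idempotent, but here I want the reverse — better to argue directly that $v^H$ would have to be "properly infinite as an idempotent of $I$", yet an infinite emitter deletion $v^H$ that still emits infinitely many edges into $H$ contradicts the structural description; alternatively mimic the chain argument of Theorem~\ref{Them:LargestLLN} replaced by the identity $b \in RaR \Rightarrow b = xay$. The cleanest route: use that $I$ being purely infinite forces $I/(\text{any graded ideal})$ to have no division-ring quotient, and combine with the classification of when $v^H$ can be avoided. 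I expect this to be the main obstacle — making the breaking-vertex exclusion airtight — and I would model the argument on the proof of \cite[Proposition~3.8.12]{AAS} / \cite[Corollary~3.8.17]{AAS} together with the corner-heredity of pure infiniteness.

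Second, I would exclude the cycle/polynomial contribution $P_C$. If $I$ contained an element $f(c)$ with $c = c_1\cdots c_k$ a cycle having no exit in the quotient graph $E/H$ and $f$ a nonzero polynomial, then modulo the graded ideal $I(H)$ the corner at the base vertex $w = s(c_1)$ of $c$ contains a copy of (a corner of) $K[x,x^{-1}]$, or at least a nonzero commutative domain that is not a field, and this produces either a division-ring quotient or a nonzero element $b$ with $b \in RaR$ but $b \neq xay$ — violating condition~(ii) of pure infiniteness. Concretely: a cycle without exits yields a vertex $w$ with $w L_K(E) w \cong K[x,x^{-1}]$ (or its image does inside $I$), and $K[x,x^{-1}]$ is not purely infinite (it is commutative, has division-ring quotients after modding out maximal ideals, or directly fails the second axiom); since corners of purely infinite rings are purely infinite, this is impossible. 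Therefore $P_C = \emptyset$.

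Combining the two exclusions, $S^H$ and $P_C$ are both empty, so $I = I(H)$ with $H = I\cap E^0$ hereditary and saturated, which is a graded ideal. This gives both assertions of the proposition. I would present the breaking-vertex step first (it is the technical heart), then the cycle step as a shorter variant of the same "a non-purely-infinite commutative-ish corner cannot live inside a purely infinite ring" principle, and close by quoting the ideal Structure Theorem \cite[Theorem~2.8.10]{AAS} to conclude gradedness.
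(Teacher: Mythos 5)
Your overall skeleton is the paper's: invoke the Structure Theorem for ideals \cite[Theorem 2.8.10]{AAS}, kill $P_C$ by exhibiting a non-purely-infinite quotient, then kill $S^H$. The $P_C$ step is essentially correct and matches the paper, which notes that $I/I(H\cup S^H)\cong \bigoplus_{\overline c} M_{\Lambda_{\overline c}}(p_c(x)K[x,x^{-1}])$, that $p_c(x)K[x,x^{-1}]$ has a quotient isomorphic to the field $K$, and that quotients of purely infinite rings are purely infinite (\cite[Lemma 3.8.9(i)]{AAS}). One small but real misstatement: $I$ is graded precisely when $P_C=\emptyset$; an ideal $I(H\cup S^H)$ with $S^H\neq\emptyset$ is still graded. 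So gradedness and the ``Moreover'' clause $I=I(H)$ are two separate claims, and you should not conflate them.

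The genuine gap is the breaking-vertex exclusion, which you yourself flag as ``the main obstacle'' and never actually close. Your first attempt (an infinite family of orthogonal idempotents, or an ascending chain, inside the corner $v^HIv^H$) cannot work as stated: purely infinite rings routinely contain infinitely many nonzero orthogonal idempotents (e.g.\ $L_K(1,2)$), so their presence yields no contradiction with pure infiniteness --- that chain argument belongs to the locally noetherian setting of Theorem \ref{Them:LargestLLN}, not here. The alternatives you list are left as vague gestures. The paper's actual argument is short and you should commit to it: once $P_C=\emptyset$, the graded ideal $I=I(H\cup S^H)$ is isomorphic to $L_K({}_{(H,S)}E)$ by \cite[Theorem 2.5.22]{AAS}; if $u\in S$, then in the generalized hedgehog graph $u$ is an infinite emitter all of whose edges land in $H$, from which no path returns to $u$, so $|{\rm CSP}(u)|=0$ and $u$ cannot be properly infinite (as an infinite emitter it is not regular, so it cannot be picked up by saturation either). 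This contradicts the characterization of purely infinite Leavitt path algebras (\cite[Corollary 3.8.17]{AAS}, Proposition \ref{prop:pilpa}), forcing $S=\emptyset$ and $I=I(H)$. Note also that this argument needs $P_C=\emptyset$ first (to have a graded ideal and hence a Leavitt path algebra realization), so the paper's order of the two steps is the right one, not the order you propose.
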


\begin{proof} Let $I$ be a nonzero purely infinite ideal of $L_K(E)$. By \cite[Theorem 2.8.10]{AAS} we have that $I=I(H \cup S^H \cup P_C)$ where $H, S^H$ and $P_C$ are as described therein. If $I$ were not graded, then $P_C \neq \emptyset$ and the ideal $I/I(H \cup S^H)$ of $L_K(E)/I(H \cup S^H)$ would be isomorphic to $\bigoplus_{\overline{c}\in \overline{C}}M_{\Lambda_{\overline{c}}}(p_c(x)K[x,x^{-1}])$. Observe that this algebra is  not purely infinite. To see this it is enough to show that $p_c(x)K[x,x^{-1}]$ is not purely infinite. Indeed,  if $<x>$ is the ideal generated by $x$ in $K[x,x^{-1}]$, there exists a quotient of $p_c(x)K[x,x^{-1}]$, concretely $p_c(x)K[x,x^{-1}] / p_c(x)<x>$, which is isomorphic to the field $K$, so (1) in \cite[Definitions 3.8.3 (ii)]{AAS} is not satisfied. This fact contradicts the purely infiniteness of $I/I(H \cup S^H)$ (by \cite[Lemma 3.8.9 (i)]{AAS}) and, consequently, $I=I(H\cup S^{H})$, i.e., $I$ is graded. 

Apply \cite[Theorem 2.5.22]{AAS} to get that $I$ is ($K$-algebra) isomorphic to the Leavitt path algebra $L_{K}(_{(H,S)}E)$. Now we prove that $S=\emptyset$. Assume on the contrary that there is an element $u\in S$. Since $u$ is a breaking vertex of $H$ in $E$, it is an infinite emitter and emits infinitely many edges into $H$ in the graph $E$. By the construction of the generalized hedgehog graph, the vertex  $u$ is an infinite emitter in $_{(H,S)}E$ and $|{\rm {\rm CSP}}(u)|=0$, also in $_{(H,S)}E$. This implies that $u$ is not a properly infinite vertex in $_{(H,S)}E$, contradicting that $L_{K}(_{(H,S)}E)$ is purely infinite. Therefore  $S=\emptyset$ and $I=I(H)$ as desired. 
\end{proof}

It is shown in \cite[Corollary 2.9.11]{AAS} that an ideal in a Leavitt path algebra is itself a Leavitt path algebra if and only if it is a graded ideal. The corresponding Leavitt path algebra is the one associated to the generalized hedgehog graph of a certain hereditary set (\cite[Theorem 2.5.22]{AAS}). 



Since the ideal generated by an extreme cycle is purely infinite (see\cite[Lemma 2.5]{CMMSS}), a question that naturally arises is whether a purely infinite Leavitt path algebra has to contain extreme cycles. The answer is no, as the following example shows.

\begin{example}
\rm The Leavitt path algebra of the following graph is purely infinite but has no extreme cycles.
\begin{figure}[H]
	\begin{center}
	\begin{tikzpicture}
	[
	->,
	>=stealth',
	auto,node distance=3cm,
	thick,
	main node/.style={circle, draw, font=\sffamily\Large\bfseries}
	]
	
	\fill (0,0) circle (2pt) node[label={[label distance=-1ex]left:{$v_1$}}] (1) {};
    \fill (1.7,0) circle (2pt) node[label=right:{$v_2$}] (2) {};
    \fill (3.6,0) circle (2pt) node[label=right:{$v_3$}] (3) {};
    \fill (5.5,0) circle (2pt) node[label=right:{$v_4$}] (4) {};
    \draw[draw=black] (0.25,0) -- (1.5,0) node[above] at (0.9,0.03) {};
    \path (2) edge [loop above, looseness=12,in=60, out=120] (2);
    \path (2) edge [loop below, looseness=12,in=-60, out=-120](2);
    \path (3) edge [loop above, looseness=12, in=60, out=120] (3);
    \path (3) edge [loop below,  looseness=12, in=-60, out=-120] (3);
    \path (4) edge [loop above,  looseness=12, in=60, out=120] (4);
    \path (4) edge [loop below,  looseness=12, in=-60, out=-120] (4);
    \draw [->,draw=black] (2.4,0)--(3.4,0) ;

	\draw [->,draw=black] (4.3,0)--(5.3,0);
	
	\draw [->,draw=black] (6.2,0)--(7.3,0);
	
	\node  at (7.9,0) {$\cdots$};
	\end{tikzpicture}
	\end{center}
\end{figure}
\end{example}

On the other hand, the ideal generated by the set of all vertices in extreme cycles is a purely infinite ideal.

\begin{theorem}\label{TheoremPecpurelyinf}
Let $E$ be an arbitrary graph and $K$ a field. Then $I(P_{ec})$ is a purely infinite ideal.
\end{theorem}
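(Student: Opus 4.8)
The plan is to reduce the statement to the elementary fact that a direct sum of purely infinite rings with local units is again purely infinite, after first exhibiting $I(P_{ec})$ as such a direct sum. First, $I(P_{ec})$ is graded (being generated by the vertices lying in extreme cycles, which are degree-zero idempotents), and since $P_{ec}$ is hereditary we have $I(P_{ec})=I(\overline{P_{ec}})$. Call two extreme cycles $c,d$ of $E$ \emph{connected} if some vertex of $c^0$ connects to some vertex of $d^0$; using that any path starting at a vertex of an extreme cycle can be completed to a path returning to that cycle, this relation is seen to be symmetric, hence an equivalence relation, and it yields a partition $P_{ec}=\bigsqcup_{\lambda\in\Lambda}C_\lambda$ into hereditary subsets. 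Fixing in each class an extreme cycle $c_\lambda$, mutual reachability inside the class gives $\overline{C_\lambda}=\overline{c_\lambda^0}$, so $I(C_\lambda)=I(c_\lambda^0)$ is purely infinite by \cite[Lemma 2.5]{CMMSS}, and it has local units as it is a graded ideal of $L_K(E)$. It remains to see that $\sum_\lambda I(C_\lambda)$ is a direct sum equal to $I(P_{ec})$: the equality is immediate from $P_{ec}=\bigsqcup_\lambda C_\lambda$, and the directness follows, via $I(A)\cap I(B)=I(A\cap B)$ for hereditary and saturated sets, from $\overline{C_\lambda}\cap\overline{C_\mu}=\emptyset$ for $\lambda\neq\mu$, which I would prove by induction on the construction of the saturated closure: the trees $T(C_\lambda)$ are pairwise disjoint because a common descendant of $c_\lambda^0$ and $c_\mu^0$ would, by the return property, make $c_\lambda$ and $c_\mu$ connected, and a regular vertex emitting only edges whose ranges lie in both $\overline{C_\lambda}$ and $\overline{C_\mu}$ would push those ranges into the intersection at an earlier stage (while a sink is not regular). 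This exhibits $I(P_{ec})=\bigoplus_{\lambda\in\Lambda}I(C_\lambda)$ as a direct sum of purely infinite ideals with local units, which is essentially the decomposition of \cite{CMMSS}.

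Then I would establish the auxiliary fact: if $R=\bigoplus_{\lambda\in\Lambda}R_\lambda$ with each $R_\lambda$ purely infinite and having local units, then $R$ is purely infinite. For condition (ii) of the definition of purely infinite ring, let $a=(a_\lambda)\in R$ (finite support) and $b\in RaR$; since $RaR=\bigoplus_\lambda R_\lambda a_\lambda R_\lambda$, we may write $b=(b_\lambda)$ with $b_\lambda\in R_\lambda a_\lambda R_\lambda$, whence pure infiniteness of $R_\lambda$ gives $b_\lambda=x_\lambda a_\lambda y_\lambda$, and taking $x_\lambda=y_\lambda=0$ whenever $a_\lambda=0$ places $x=(x_\lambda)$ and $y=(y_\lambda)$ in $R$, with $xay=b$. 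For condition (i), the local units force every ideal $J$ of $R$ to split as $J=\bigoplus_\lambda J_\lambda$ with each $J_\lambda$ an ideal of $R_\lambda$ (since $a\in R_\lambda aR_\lambda$ for $a\in R_\lambda$), so $R/J\cong\bigoplus_\lambda R_\lambda/J_\lambda$; as a division ring is nonzero and has no idempotent besides $0$ and $1$, such a decomposition would force all summands but one to vanish and the surviving $R_{\lambda_0}/J_{\lambda_0}$ to be a division ring, contradicting pure infiniteness of $R_{\lambda_0}$. Applying this with $R_\lambda=I(C_\lambda)$ completes the proof.

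The step I expect to be the main obstacle is the directness of the decomposition for an \emph{arbitrary} graph: pinning down the hereditary and saturated closures $\overline{C_\lambda}$ and running the induction that shows them pairwise disjoint, which is precisely where the return property of extreme cycles must be exploited carefully and where the passage from finite to arbitrary graphs is not merely formal. If one is content to quote the decomposition of $I(P_{ec})$ into purely infinite (simple) pieces from \cite{CMMSS} as a black box, then only the short verification in the second paragraph remains. An alternative route that avoids the decomposition entirely is to use that, being graded, $I(P_{ec})$ is isomorphic to the Leavitt path algebra $L_K(G)$ of the generalized hedgehog graph $G$ of $\overline{P_{ec}}$, and to check via Proposition \ref{prop:pilpa} that $G$ has no breaking vertices and that every vertex of $G$ is properly infinite; there the difficulty shifts to verifying proper infiniteness for the \emph{hair} vertices of $G$ and for the vertices introduced by saturation, which once again relies on the return property of extreme cycles.
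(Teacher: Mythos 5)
Your argument is correct, but it follows a genuinely different route from the paper's. The paper proves the theorem in one global step: it identifies $I(P_{ec})$ with $L_K({}_{H}E)$ for $H=P_{ec}$ via \cite[Theorem 2.5.19]{AAS} and then checks the two conditions of Proposition \ref{prop:pilpa} directly on the hedgehog graph --- an infinite emitter of ${}_{H}E$ must lie in $P_{ec}$ and the return property of extreme cycles forces it into any hereditary saturated $Y$ into which it emits, so $B_Y=\emptyset$; and every vertex of ${}_{H}E$ either lies on an extreme cycle (so $|{\rm CSP}|\geq 2$) or is a hair vertex connecting to one, so all vertices are properly infinite. This is exactly the ``alternative route'' you describe in your closing paragraph, and it is the same template the paper reuses for $I(P_{ppi})$ in Proposition \ref{Prop:PPIispurelyinfinite}. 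Your main route instead decomposes $I(P_{ec})$ into the ideals generated by the connectedness classes of extreme cycles --- which is precisely \cite[Proposition 2.6]{CMMSS}, a result the paper itself invokes later in Section \ref{Sec:structurelpi} --- and then appeals to the closure fact that a direct sum of purely infinite rings with local units is purely infinite; your verification of that closure fact is sound (and it is essentially available in \cite{AGPS}, so you need not reprove it). What your route buys is the finer structural information (the purely infinite \emph{simple} pieces) as a byproduct, at the cost of either importing \cite[Proposition 2.6]{CMMSS} wholesale or reproving the pairwise disjointness of the closures $\overline{C_\lambda}$; your sketch of that disjointness is the only thin spot --- the induction needs the small case analysis in which $v$ enters $\overline{C_\lambda}$ and $\overline{C_\mu}$ at different saturation stages (handled by minimizing the sum of the two stages and using that each $C_\lambda=T(c_\lambda^0)$ is already hereditary) --- but the idea you indicate does close it. The paper's argument is shorter and self-contained given \cite[Corollary 3.8.17]{AAS}; yours is more modular and reusable.
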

\begin{proof}
Recall that $P_{ec}$ is a hereditary set and denote it by $H$. By \cite[Theorem 2.5.19]{AAS} the ideal generated by $H$ is $K$-algebra isomorphic to the Leavitt path algebra of the hedgehog graph ${}_HE$. We will use (ii) in Proposition \ref{prop:pilpa}. We prove that the two conditions in (ii) are satisfied.

(i) Assume on the contrary that there exists a hereditary saturated set $Y \subseteq {}_HE^0$ with $B_Y \neq \emptyset$. Take $v \in B_Y$. Since $v$ is an infinite emitter, by the construction of the hedgehog graph $v \notin F_E(H)$, so
$v \in H=P_{ec}$. Moreover, as $v \in B_Y$, $v \notin Y$. There exists an edge $e$ starting from $v$ to a vertex $u$ in $Y$. As $H$ is hereditary, $u \in H$. Also, $e$ is either in the extreme cycle where $v$ lies on, or $e$ is an exit for the extreme cycle to which $v$ belongs. In both cases, there is a path from $u$ to $v$. Hence, $v \in Y$. This is a contradiction.

(ii) Let $v \in {}_HE^0$. If $v \in H$, we can take $w_1=v$ and since $v$ is a vertex in an extreme cycle then
$|{\rm CSP}(v)| \geq 2$ is satisfied.
Suppose $v \in F_E(H)$, then $v$ corresponds to a path $\alpha=e_1e_2...e_n$ in $E$, where
$s(e_1) \in E^0\backslash H$, $r(e_i) \in E^0\backslash H$ for all $1 \leq i < n$ and $r(e_n) \in H$.
There is an edge $\overline{v}$ in the hedgehog graph ${}_HE$ such that $r(\overline{v})=r(e_n):= w \in H$. Since
$w$ is a vertex in an extreme cycle, $|{\rm CSP}(w)| \geq 2$ is satisfied.
Moreover, in the hedgehog graph ${}_HE$, $w \in T(v)$ and $v \in \overline{\{w\}}$.
\end{proof}

Next, we want to investigate whether $I(P_{ec})$ is the largest purely infinite ideal in $L_K(E)$.
Note that in a ring $R$ with local units, if $R$ is purely infinite then any ideal $I$ of $R$ is also purely infinite.
Moreover, $R/I$ is also a purely infinite ring \cite[Lemma 3.8.9]{AAS}.
Hence, if $L_K(E)$ is a purely infinite ring, then any ideal is purely infinite. The examples that follow illustrate that $I(P_{ec})$ is not necessarily the largest purely infinite ideal. 

\medskip

\begin{example}
\rm
Consider the graph $E$:

\begin{figure}[H]
	\begin{center}
	\begin{tikzpicture}
	[
	->,
	>=stealth',
	auto,node distance=3cm,
	thick,
	main node/.style={circle, draw, font=\sffamily\Large\bfseries}
	]
	\fill (1.7,0) circle (2pt) node[label=right:{$v_1$}] (2) {};
    \fill (3.6,0) circle (2pt) node[label=right:{$v_2$}] (3) {};
    \fill (5.5,0) circle (2pt) node[label=right:{$v_3$}] (4) {};
    \path (2) edge [loop above, looseness=12,in=60, out=120] (2);
    \path (2) edge [loop below, looseness=12,in=-60, out=-120](2);
    \path (3) edge [loop above, looseness=12, in=60, out=120] (3);
    \path (3) edge [loop below,  looseness=12, in=-60, out=-120] (3);
    \path (4) edge [loop above,  looseness=12, in=60, out=120] (4);
    \path (4) edge [loop below,  looseness=12, in=-60, out=-120] (4);
    \draw [->,draw=black] (2.4,0)--(3.5,0) ;

	\draw [->,draw=black] (4.3,0)--(5.4,0);
		
	\end{tikzpicture}
	\end{center}
\end{figure}

\vspace{0.5cm}

The Leavitt path algebra $L_K(E)$ is purely infinite. Both $P_{ec} =\{ v_3\}$ and $\{v_2,v_3\}$ are hereditary sets that generate proper purely infinite ideals with $I(\{v_2,v_3\}) \supsetneq I(P_{ec})$.
\end{example}

\begin{example}\label{kezban}
\rm
Consider the graph $E$:

\begin{figure}[H]
	\begin{center}
	\begin{tikzpicture}
	[
	->,
	>=stealth',
	auto,node distance=3cm,
	thick,
	main node/.style={circle, draw, font=\sffamily\Large\bfseries}
	]
	
	\fill (0,0) circle (2pt) node[label={[label distance=-1ex]left:{$v_4$}}] (1) {};
    \fill (1.7,0) circle (2pt) node[label=right:{$v_1$}] (2) {};
    \fill (3.6,0) circle (2pt) node[label=right:{$v_2$}] (3) {};
    \fill (5.5,0) circle (2pt) node[label=right:{$v_3$}] (4) {};
    \draw[draw=black] (1.5,0) -- (0.15,0);
    \path (2) edge [loop above, looseness=12,in=60, out=120] (2);
    \path (2) edge [loop below, looseness=12,in=-60, out=-120](2);
    \path (3) edge [loop above, looseness=12, in=60, out=120] (3);
    \path (3) edge [loop below,  looseness=12, in=-60, out=-120] (3);
    \path (4) edge [loop above,  looseness=12, in=60, out=120] (4);
    \path (4) edge [loop below,  looseness=12, in=-60, out=-120] (4);
    \draw [->,draw=black] (2.4,0)--(3.5,0) ;

	\draw [->,draw=black] (4.3,0)--(5.4,0);
		
	\end{tikzpicture}
	\end{center}
\end{figure}
\vspace{0.25cm}
The Leavitt path algebra $L_K(E)$ is not purely infinite. The ideal generated by the vertices in extreme cycles, $I(P_{ec})$, is purely infinite, but it is not the largest one as it is strictly contained in the purely infinite ideal $I(\{v_2,v_3\})$. 
\end{example}


\begin{lemma}\label{lem:ECinPI} For an arbitratry graph $E$ and any field $K$, we have that
 $P_{ec}^E \subseteq P_{pi}^E$. 
 \end{lemma}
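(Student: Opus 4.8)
The plan is to check that every $v\in P_{ec}^E$ meets the definition of a properly infinite vertex recalled above, taking the witnessing list to consist of a single vertex. Fix $v\in P_{ec}^E$ and let $c=e_1\cdots e_n$ be an extreme cycle with $v\in c^0$. Since an extreme cycle has an exit, pick an edge $f$ with $s(f)\in c^0$ and $f\notin\{e_1,\dots,e_n\}$ (the latter is automatic once $f$ is an exit of $c$, because the $s(e_i)$ are pairwise distinct), and set $u:=s(f)$. As $u$ and $v$ both lie on the cycle $c$, following the edges of $c$ shows $v\ge u$ and $u\ge v$; hence $u\in T(v)$ and $v\in T(u)\subseteq\overline{\{u\}}$. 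Thus it remains only to prove $|{\rm CSP}(u)|\ge 2$: then the definition of properly infinite vertex with $n=1$ and $w_1=u$ yields $v\in P_{pi}^E$.

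To exhibit two distinct closed simple paths based at $u$, the first is the cycle $c$ itself re-read from $u$: reindexing cyclically so that $s(e_k)=u$, the closed path $\alpha:=e_ke_{k+1}\cdots e_n e_1\cdots e_{k-1}$ is based at $u$ and, $c$ being a cycle, visits each of its vertices once, so $\alpha$ is simple. For the second, I would apply the defining property of the extreme cycle $c$ to the path $f$ (which starts at $u\in c^0$): there is $\mu\in{\rm Path}(E)$ with $f\mu\ne 0$ and $r(f\mu)\in c^0$. Following the edges of $c$ from $r(f\mu)$ around to $u$ gives a (possibly trivial) path $\delta$, and $\nu:=f\mu\delta$ is a closed path based at $u$ whose first edge is $f$. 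Cutting $\nu$ at the first edge whose range equals $u$ produces a closed simple path $\beta$ based at $u$ with first edge $f$. Since $f\notin\{e_1,\dots,e_n\}$ while the first edge of $\alpha$ lies in $\{e_1,\dots,e_n\}$, we get $\alpha\ne\beta$, whence $|{\rm CSP}(u)|\ge 2$.

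The steps that need care are purely combinatorial: that $\beta$ is genuinely simple (it is, because it is truncated exactly at the first return of $\nu$ to $u$, so no internal vertex of $\beta$ equals $u$), that $\beta$ is genuinely distinct from $\alpha$ (guaranteed by comparing first edges), and the elementary observations that $u\in T(v)$, $v\in\overline{\{u\}}$, and that $\alpha$ is a closed simple path. I expect this bookkeeping to be the only, and rather minor, obstacle. A less self-contained alternative would be to invoke Theorem \ref{TheoremPecpurelyinf}: $I(P_{ec})$ is purely infinite and isomorphic to the Leavitt path algebra of the hedgehog graph ${}_{P_{ec}}E$, so by Proposition \ref{prop:pilpa} every vertex of ${}_{P_{ec}}E$ is properly infinite there; one then checks that for a vertex of $P_{ec}$ the relevant closed simple paths and the hereditary--saturated closure agree in ${}_{P_{ec}}E$ and in $E$, so proper infiniteness transfers back to $E$.
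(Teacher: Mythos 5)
Your proof is correct and follows essentially the same route as the paper's: both reduce the claim to exhibiting, for a vertex $v$ of an extreme cycle, a vertex $u$ with $u\in T(v)$, $v\in\overline{\{u\}}$ and $|{\rm CSP}(u)|\geq 2$. The paper delegates this verification to \cite[Lemma 3.8.11]{AAS} in one line, whereas you construct the two closed simple paths explicitly; your bookkeeping (the exit edge cannot be an edge of the cycle, and truncating at the first return to $u$ yields a simple closed path distinct from the cycle itself) is sound.
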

\begin{proof}
Let $u$ be a vertex in an extreme cycle, and take $v\in T(u)$. By the definition of extreme cycle there exists $w \in T(v)$ with $|{\rm CSP}(w)| \geq 2$. This implies, by \cite[Lemma 3.8.11]{AAS}, that $u$ is a properly infinite idempotent.
\end{proof}

The set of properly infinite vertices $P_{pi}$, is not necessarily a hereditary set.

\begin{example}
\rm
In the graph  
\begin{figure}[H]
	\begin{center}
	\begin{tikzpicture}
	[
	->,
	>=stealth',
	auto,node distance=3cm,
	thick,
	main node/.style={circle, draw, font=\sffamily\Large\bfseries}
	]
	
	\fill (0,0) circle (2pt) node[label={[label distance=-1ex]left:{$v$}}](1) {};
    \fill (1.7,0) circle (2pt) node[label={[label distance=-1ex]right:{$w$}}] (2) {};
    \draw[draw=black] (0.25,0) -- (1.5,0);
    \path (1) edge [loop above, looseness=12,in=60, out=120] (1);
    \path (1) edge [loop below, looseness=12,in=-60, out=-120](1);
    \path (2) edge [loop above, looseness=12, in=60, out=120] (2);
    		
	\end{tikzpicture}
	\end{center}
\end{figure}
\noindent
the vertex $v$ is in $P_{pi}$, but $w$ is not a properly infinite vertex and $v \geq w$.
\end{example}

\begin{example}
\rm
Consider the graph $E$ in Example \ref{kezban} and denote by $e$ the edge starting at $v_1$ and finishing at $v_4$. 
%
%
%
%

We know that $L_K(E)$ is not a purely infinite ring. Observe that $P_{pi} = \{v_1,v_2,v_3\}$ and $v_4 \in I(\{v_1,v_2,v_3\})$ since $v_4=e^*v_1e$. So $I(\{v_1,v_2,v_3\})=L_K(E)$, which is not purely infinite.
\end{example}

Our next aim is to provide a subset of vertices which  will generate the largest purely infinite ideal of a Leavitt path algebra.
Define: 

\[P_{ppi}:= \{v\in E^{0}\mid T(v)\subseteq P_{pi}\;\text{and} \;T(v)\;\text{has\;no\;breaking\;vertices}.\}\]

\begin{lemma}\label{lem:tpiher}
Let $E$ be an arbitrary graph. Then:
\begin{enumerate} [\rm(i)]
\item $P_{ppi}$ is a hereditary and saturated set.
\item $P_{ec}\subseteq P_{ppi}$.
\end{enumerate}

\begin{proof}
(i) Let $v\in P_{ppi}$ and $w\in T(v)$. Since $T(w)\subseteq T(v)$, $T(w)\subseteq P_{pi}$; apply that there are no breaking vertices in $T(v)$ and therefore in $T(w)$, to get $w\in P_{ppi}$. This shows that it is hereditary. That $P_{ppi}$ is saturated follows immediately.

(ii) Let $v\in P_{ec}$, where $v\in c^0$ for some extreme cycle $c$. Take $w\in T(v)$.  Let  $\alpha$ be a path such that $s(\alpha)=v$ and $r(\alpha)=w$. Since $v$ is in an extreme cycle, there exists another path $\beta$ starting at $w$ and ending at a vertex in $c^0$. By the definition of extreme cycle, $|{\rm CSP}(w)|\geq 2$ and so $w\in P_{pi}$; using that there are no breaking vertices in $T(v)$ we obtain $v\in P_{ppi}$. 
\end{proof}
\end{lemma}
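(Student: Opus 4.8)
The plan is to verify the two assertions separately, in each case unwinding the definition of $P_{ppi}$ and using the known facts that $P_{pi}$ (respectively $P_{ec}$) is described through the cardinality of the sets ${\rm CSP}$ of closed simple paths at vertices in the tree. For part (i), the hereditary property is the main point: given $v\in P_{ppi}$ and $w\in T(v)$, one observes that $T(w)\subseteq T(v)$, so $T(w)\subseteq T(v)\subseteq P_{pi}$ immediately; and since no vertex of $T(v)$ is a breaking vertex for $T(v)$ — and this is a property that is inherited when one passes to the smaller hereditary set $T(w)$ (an infinite emitter sending all but finitely many edges into $T(w)$ would also send all but finitely many into the larger set $T(v)$) — we conclude $T(w)$ has no breaking vertices, hence $w\in P_{ppi}$. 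For saturation, suppose $v$ is a regular vertex with $r(s^{-1}(v))\subseteq P_{ppi}$; then $T(v)=\{v\}\cup\bigcup_{e\in s^{-1}(v)}T(r(e))\subseteq \{v\}\cup P_{ppi}$, and since $P_{ppi}\subseteq P_{pi}$ and one checks directly that $v$ itself is properly infinite and not a breaking vertex once all its range vertices are in $P_{ppi}$, it follows that $v\in P_{ppi}$; I expect this to be the routine bookkeeping step.

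For part (ii), let $v\in P_{ec}$, say $v\in c^0$ for an extreme cycle $c$. I would show first that $T(v)\subseteq P_{pi}$: for any $w\in T(v)$, pick a path $\alpha$ with $s(\alpha)=v$, $r(\alpha)=w$; by the definition of an extreme cycle applied to the path $\alpha$, there is a path $\beta$ with $0\neq\alpha\beta$ and $r(\alpha\beta)\in c^0$, so $\beta$ is a path from $w$ back to a vertex of $c$, and together with the two distinct closed simple paths available at that vertex of the extreme cycle (extreme cycles have exits, and the return structure yields $|{\rm CSP}(w)|\geq 2$) we get that $w$ is properly infinite by \cite[Lemma 3.8.11]{AAS} or \cite[Proposition 3.8.12]{AAS}. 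Second, I would argue $T(v)$ has no breaking vertices; this can be read off from the fact — implicit in Theorem \ref{TheoremPecpurelyinf}, where exactly the same kind of argument was run on the hedgehog graph ${}_HE$ for $H=P_{ec}$ — that any vertex of $T(v)$ emitting an edge into $T(v)$ and lying on or near $c$ connects back, so it cannot be a breaking vertex of the hereditary set $T(v)$. Combining the two gives $v\in P_{ppi}$.

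The main obstacle will be getting the "no breaking vertices" condition exactly right in part (ii): one must be careful that $T(v)$ is the relevant hereditary set, that breaking vertices are defined relative to it, and that every relevant infinite emitter in $T(v)$ genuinely routes all but finitely many of its edges back inside $T(v)$ (as opposed to merely being an infinite emitter). Here I would lean on the structure already exploited in the proof of Theorem \ref{TheoremPecpurelyinf}, namely that in $I(P_{ec})$ there are no breaking vertices for any hereditary saturated subset; restricting to the tree of a single extreme-cycle vertex is a special case of that. Everything else — the tree-inclusion $T(w)\subseteq T(v)$, the inheritance of the properly-infinite property down trees (Lemma \ref{lem:ECinPI}), and the saturation check — is straightforward.
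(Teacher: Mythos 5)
Your proposal is correct and follows essentially the same route as the paper: for (i) the identical observation that $T(w)\subseteq T(v)$ transfers both the properly-infinite condition and the absence of breaking vertices, and for (ii) the identical use of the extreme-cycle property to produce a return path from any $w\in T(v)$ to $c^0$, giving $|{\rm CSP}(w)|\geq 2$ and hence $w\in P_{pi}$. The only difference is that you spell out the saturation check and the "no breaking vertices" verification in (ii) (via the argument of Theorem \ref{TheoremPecpurelyinf}), both of which the paper leaves as immediate; this is a harmless elaboration, not a different proof.
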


\begin{proposition}\label{Prop:PPIispurelyinfinite}
Let $E$ be an arbitrary directed graph and $P_{ppi}$ be the set defined above. The ideal $I(P_{ppi})$ is purely infinite.
\end{proposition}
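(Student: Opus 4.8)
The plan is to show that $I(P_{ppi})$ satisfies condition (ii) of Proposition \ref{prop:pilpa} after identifying it with a Leavitt path algebra of the appropriate hedgehog graph. Since $P_{ppi}$ is hereditary and saturated by Lemma \ref{lem:tpiher}(i), the ideal $I(P_{ppi})$ is graded, and by \cite[Theorem 2.5.19]{AAS} it is $K$-algebra isomorphic to $L_K({}_HE)$ where $H := P_{ppi}$. So it suffices to prove that ${}_HE$ has no breaking vertices for any hereditary saturated subset of $({}_HE)^0$, and that every vertex of ${}_HE$ is properly infinite.

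For the first condition, I would argue exactly as in the proof of Theorem \ref{TheoremPecpurelyinf}(i): suppose $Y \subseteq ({}_HE)^0$ is hereditary and saturated with $v \in B_Y$. Then $v$ is an infinite emitter in ${}_HE$, so by the construction of the hedgehog graph $v$ cannot be one of the newly-added ``path'' vertices of $F_E(H)$ (those are all regular, emitting a single edge $\overline{\alpha}$ or lying on a genuine path into $H$), hence $v \in H = P_{ppi}$. But $v \in P_{ppi}$ means $T(v)$ has no breaking vertices; in particular $v$ itself, being an infinite emitter in ${}_HE$ with only finitely many edges ranging outside $Y$, would have to already emit only finitely many edges outside $H$ in $E$ — more carefully, since $v \in B_Y$ with $Y \subseteq ({}_HE)^0$ and all edges $v$ emits in ${}_HE$ land in $H \cup F_E(H)$, and cofinitely many land in $Y$, one checks this forces $v$ to be a breaking vertex of an appropriate hereditary set in $E$, contradicting $v \in P_{ppi}$. (Alternatively: inside $P_{ppi}$ there are no breaking vertices by definition, and the hedgehog construction does not create new infinite emitters among the vertices of $H$, so no vertex of ${}_HE$ can break any hereditary saturated set.)

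For the second condition, take $v \in ({}_HE)^0$. If $v \in H = P_{ppi}$, then $T_E(v) \subseteq P_{pi}$, so in particular there are vertices $w_1,\dots,w_n \in T_E(v)$ with $|{\rm CSP}(w_i)| \geq 2$ and $v \in \overline{\{w_1,\dots,w_n\}}^E$; since $H$ is hereditary, all the $w_i$ and the relevant closure computation take place inside $H$, so the same witnesses work in ${}_HE$ and $v$ is properly infinite there by \cite[Proposition 3.8.12]{AAS}. If instead $v \in F_E(H)$, then $v$ is a path vertex with an edge $\overline{v}$ in ${}_HE$ whose range is a vertex $w \in H = P_{ppi} \subseteq P_{pi}$; then $w \in T(v)$ in ${}_HE$ and $v \in \overline{\{w\}}$, and since $w$ is properly infinite so is $v$ (using, e.g., \cite[Lemma 3.8.11]{AAS} as in Lemma \ref{lem:ECinPI}).

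The main obstacle I anticipate is the careful bookkeeping in the first condition: one must be sure that the only infinite emitters of ${}_HE$ are vertices already in $H$ (the hedgehog construction adds only regular path-vertices), and then translate ``$v$ breaks $Y$ in ${}_HE$'' back into ``$v$ breaks some hereditary set in $E$'' so as to contradict the defining property of $P_{ppi}$. Once that translation is pinned down, both conditions follow the template already used for Theorem \ref{TheoremPecpurelyinf}, and Proposition \ref{prop:pilpa} finishes the proof.
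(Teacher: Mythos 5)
Your proposal is correct and follows essentially the same route as the paper: identify $I(P_{ppi})$ with $L_K({}_HE)$ for $H=P_{ppi}$ via \cite[Theorem 2.5.19]{AAS}, then verify both conditions of Proposition \ref{prop:pilpa}, with condition (i) following because the only possible infinite emitters of ${}_HE$ lie in $H$ and $P_{ppi}$ excludes breaking vertices by definition, and condition (ii) handled exactly as you describe (vertices of $H$ are already properly infinite, and each path-vertex of $F_E(H)$ inherits the witnesses of the vertex of $H$ it reaches). Your treatment of condition (i) is somewhat more detailed than the paper's one-line justification, but the underlying argument is identical.
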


\begin{proof}
	Denote $H:=P_{ppi}$, which is a hereditary and saturated set by Lemma \ref{lem:tpiher}. Apply \cite[Theorem 2.5.19]{AAS} to get that $I(H)\cong L_{K}(_HE)$. We will show that the Leavitt path algebra $L_{K}(_HE)$ is purely infinite using Proposition \ref{prop:pilpa}. Note that the hedgehog graph $_HE$ has no breaking vertices since the same happens to $H$. Therefore (i) in Proposition \ref{prop:pilpa} is satisfied.
	
	Now, take $v\in (_HE)^0$; if $v\in H$, that is, $T(v)\subseteq P_{pi}$, then $v\in P_{pi}$ and we are done. If $v\in F_{E}(H)$ then there is only one edge starting at $v$ and ending at a vertex $w\in H$. Since $w\in P_{pi}$, there exist $w_1, w_2, ..., w_n$ in $T(w)$ such that $|{\rm CSP}(w_i)| \geq 2$ for all $i$ and
$w \in \overline{\{ w_1, w_2, ..., w_n \}}$. Clearly $w_1, w_2, ..., w_n$ in $T(v)$ and $v \in \overline{\{ w_1, w_2, ..., w_n \}}$. This proves (ii) in Proposition \ref{prop:pilpa}.
\end{proof}

\begin{theorem}\label{Theorem:piisthemaximal}
	Let $E$ be an arbitrary directed graph. The ideal $I(P_{ppi})$ is the largest purely infinite ideal in $L_{K}(E)$.
\end{theorem}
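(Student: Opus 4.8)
The plan is to combine Proposition \ref{Prop:PPIispurelyinfinite}, which already gives that $I(P_{ppi})$ is purely infinite, with a proof that \emph{every} purely infinite ideal $I$ of $L_K(E)$ is contained in $I(P_{ppi})$; together these say that $I(P_{ppi})$ is the largest purely infinite ideal. So let $I$ be purely infinite (we may assume $I\neq 0$). By Proposition \ref{Proposition:piisgraded} such an $I$ is graded, $I=I(H)$ for some hereditary and saturated $H\subseteq E^0$, and (as in its proof) $I$ is $K$-algebra isomorphic to $L_K({}_HE)$; hence $L_K({}_HE)$ is purely infinite and, by Proposition \ref{prop:pilpa}, the hedgehog graph ${}_HE$ has no breaking vertices for any of its hereditary saturated subsets and each of its vertices is properly infinite. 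Since $H\mapsto I(H)$ is inclusion preserving, it is enough to prove $H\subseteq P_{ppi}$, i.e. that for every $v\in H$ one has $T(v)\subseteq P_{pi}$ and $T(v)$ has no breaking vertices. Fix such a $v$; as $H$ is hereditary, $T(v)\subseteq H$, so every $w\in T(v)$ is a vertex of ${}_HE$.

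For the inclusion $T(v)\subseteq P_{pi}$ I would transfer proper infiniteness from ${}_HE$ back to $E$. The restriction graph $({}_HE)_H$ coincides with $E_H$, and $H$ is hereditary and saturated in both $E$ and ${}_HE$; consequently, for vertices lying inside $H$ the sets of closed simple paths, and the hereditary and saturated closures of subsets of $H$, are the same whether computed in $E$, in $E_H$, or in ${}_HE$. Therefore a family $w_1,\dots,w_n\in T(w)$ with $|{\rm CSP}(w_i)|\geq 2$ and $w\in\overline{\{w_1,\dots,w_n\}}$ witnessing that $w$ is properly infinite in ${}_HE$ is equally a witness in $E$, so $w\in P_{pi}^E$; this gives $T(v)\subseteq P_{pi}$.

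For the second condition I argue by contradiction. Suppose some $w\in T(v)$ is a breaking vertex of a hereditary set $H'$; intersecting $H'$ with $T(v)$ we may assume $H'\subseteq T(v)$, and then $w$ is an infinite emitter. Set $Y:=\overline{H'}^{E}$; since $H'\subseteq H$ and $H$ is hereditary and saturated, $Y\subseteq H$, and $w\notin Y$ because an infinite emitter is never adjoined during a saturation. I claim $w\in B_Y^{E}$. Finiteness of $s^{-1}(w)\cap r^{-1}(E^0\setminus Y)$ is clear from $H'\subseteq Y$. For nonemptiness, suppose every edge emitted by $w$ had range in $Y$; then $w$ lies on no cycle (a cycle through $w$ uses an edge $e$ with $s(e)=w$, whence $w\in T(r(e))\subseteq Y$ since $r(e)\in Y$ and $Y$ is hereditary), and, taking $w_1,\dots,w_n\in T(w)$ witnessing that $w$ is properly infinite (which holds by the previous paragraph), i.e. $|{\rm CSP}(w_i)|\geq 2$ and $w\in\overline{\{w_1,\dots,w_n\}}$, each $w_i$ lies on a cycle, hence $w_i\neq w$, hence $w_i\in T(w)\setminus\{w\}\subseteq Y$; then $w\in\overline{\{w_1,\dots,w_n\}}\subseteq Y$, a contradiction. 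So $w\in B_Y^{E}$ with $Y$ hereditary, saturated, and contained in $H$. Finally I transfer this to ${}_HE$: its saturation $\widetilde Y:=\overline{Y}^{{}_HE}$ satisfies $\widetilde Y\cap H=Y$ — the only vertices it adjoins lie in $({}_HE)^0\setminus H$, since no edge of ${}_HE$ ranges there and $Y$ is already saturated inside $H$ — and, because $w\in H$ emits in ${}_HE$ exactly the edges it emits in $E$, all with ranges in $H$, we obtain $w\in B_{\widetilde Y}^{{}_HE}$ with $\widetilde Y$ hereditary and saturated, contradicting that ${}_HE$ has no breaking vertices. Hence $T(v)$ has no breaking vertices, so $v\in P_{ppi}$; as $v\in H$ was arbitrary, $H\subseteq P_{ppi}$, whence $I=I(H)\subseteq I(P_{ppi})$.

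The main obstacle is the bookkeeping around the generalized hedgehog graph: proper infiniteness and the breaking-vertex condition are formulated in terms of $E$, whereas pure infiniteness of the ideal only gives information about ${}_HE$, so one must check that the extra hedgehog vertices and edges — which feed into $H$ but lie on no cycle and are never ranges of edges — do not alter cycles, closed simple paths, or the saturations used above. The one point that needs a genuine argument rather than direct transfer is the nonemptiness in the claim $w\in B_Y^{E}$, where proper infiniteness of $w$ is used precisely to upgrade a breaking vertex of a merely hereditary set to a breaking vertex of a hereditary \emph{saturated} set, so that Proposition \ref{prop:pilpa} can be applied.
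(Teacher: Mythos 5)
Your proposal is correct and follows essentially the same route as the paper: reduce to $I=I(H)$ via Proposition \ref{Proposition:piisgraded}, identify $I(H)$ with $L_K({}_HE)$, apply Proposition \ref{prop:pilpa}, and transfer proper infiniteness and the absence of breaking vertices back to $E$ to conclude $H\subseteq P_{ppi}$. The paper merely asserts that the tree of $v\in H$ has no breaking vertices in $E$, whereas you supply the missing verification (including the nonemptiness argument needed to upgrade a breaking vertex of a hereditary set to one of a hereditary saturated set); this is added detail, not a different approach.
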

\begin{proof}

Let $J=I(H)$ be a purely infinite ideal of $L_{K}(E)$, where $H$ is a hereditary and saturated subset of $E^0$ by Proposition \ref{Proposition:piisgraded}. Apply \cite[Theorem 2.5.19]{AAS} to get that $I(H)\cong L_{K}(_HE)$. Our aim is to show $H\subseteq P_{ppi}$. 

Take $v\in H$. Then $v$ is properly infinite and its tree in $_{H}E$ has no breaking vertices. Hence there exist $w_1, w_2, ..., w_n \in T^{_{H}E}(v)$  such that $|{\rm CSP}^{_{H}E}(w_i)| \geq 2$ for all $i$ and
$v \in {\overline{\{ w_1, w_2, ..., w_n \}}^{_{H}E}}$. By the construction of the hedgehog graph, $w_1, w_2, ..., w_n \in T^{E}(v)$ and $|{\rm CSP}^{E}(w_i)| \geq 2$ for all $i$; besides,
$v \in \overline{\{ w_1, w_2, ..., w_n \}}^{E}$. Therefore, $v$ is a properly infinite vertex in $E$. Moreover, in the graph $E$, its tree has no breaking vertices. Since $L_{K}(_HE)$ is purely infinite, we have $T^{E}(v)\subseteq P_{pi}$. So, $v\in P_{ppi}$ and we conclude that the ideal $I(P_{ppi})$ is the largest purely infinite ideal in $L_{K}(E)$.
\end{proof}

The condition that $T(v)$ does not contain breaking vertices cannot be eliminated in order to have a purely infinite ideal. Define
\[P'_{ppi}:= \{v\in E^{0}\mid T(v)\subseteq P_{pi}\}\]
The example that follows shows that the ideal $I(P'_{ppi})$ is not  purely infinite.

\begin{example}
\rm 
Consider the graph $E$:

\begin{figure}[H]
	\begin{center}
	\begin{tikzpicture}
	[
	->,
	>=stealth',
	auto,node distance=3cm,
	thick,
	main node/.style={circle, draw, font=\sffamily\Large\bfseries}
	]
	
	\fill (0,0) circle (2pt) node[label=left:{$v_1$}] (1) {};
    \fill (2,0) circle (2pt) node[label=left:{$v_2$}] (2) {};
    \fill (4,0) circle (2pt) node[label=left:{$v_3$}] (3) {};
    \fill (6,0) circle (2pt) node[label=left:{$v_4$}] (4) {};
    \node  at (8.3,0) {$\cdots$};
    
    \node  at (7.5,-1) {$\cdots$};
    \draw[->,draw=black] (0.25,0) -- (1.25,0);
    \path (1) edge [loop above, looseness=12, in=60, out=120] node[above] {} (1);
    \path (2) edge [loop above, looseness=12, in=60, out=120] (2);
    \path (2) edge [loop below, looseness=12, in=-60, out=-120] (2);
    
    \path (3) edge [loop above, looseness=12, in=60, out=120] (3);
    \path (3) edge [loop below, looseness=12, in=-60, out=-120] (3);
    
    \path (4) edge [loop above, looseness=12, in=60, out=120] (4);
    \path (4) edge [loop below, looseness=12, in=-60, out=-120] (4);    
    
    \draw [->,draw=black] (2.2,0) -- (3.25,0);
    
	\draw [->,draw=black] (4.20,0) -- (5.25,0);
	\draw [->,draw=black] (6.20,0) -- (7.5,0);
	
	\draw [->,draw=black] (2.3,-0.25) to [bend left=300, looseness=0.8] (3.4,-0.25);
	\draw [->,draw=black] (2.3,-0.25) to [bend left=300, looseness=0.75] (5.5,-0.25);
	\draw [->,draw=black] (2.3,-0.25) to [bend left=300, looseness=0.75] (7.5,-0.25);
	
	\end{tikzpicture}
	\end{center}
\end{figure}

$P'_{ppi}=\{v_2,v_3,\cdots\}$ and the corresponding hedgehog graph $_{P'_{ppi}}E$ is:

\begin{figure}[H]
	\begin{center}
	\begin{tikzpicture}
	[
	->,
	>=stealth',
	auto,node distance=3cm,
	thick,
	main node/.style={circle, draw, font=\sffamily\Large\bfseries}
	]

    \fill (2,0) circle (2pt) node[label=left:{$v_2$}] (2) {};
    \fill (4,0) circle (2pt) node[label=left:{$v_3$}] (3) {};
    \fill (6,0) circle (2pt) node[label=left:{$v_4$}] (4) {};
    
    \fill (0.20,0.0) circle (2pt) node[label=left:{}] (1) {};
    \fill (0.30,0.40) circle (2pt) node[label=left:{}] (1) {};
    \node  at (8.3,0) {$\cdots$};
    
    \node  at (7.5,-1) {$\cdots$};
    \node  at (0.6,-0.3) {$\vdots$};
        
    \draw[->,draw=black] (0.40,0.36) -- (1.25,0.1);
    \draw[->,draw=black] (0.3,0.0) -- (1.25,0.0);
    \path (2) edge [loop above, looseness=12, in=60, out=120] (2);
    \path (2) edge [loop below, looseness=12, in=-60, out=-120] (2);
    
    \path (3) edge [loop above, looseness=12, in=60, out=120] (3);
    \path (3) edge [loop below, looseness=12, in=-60, out=-120] (3);
    
    \path (4) edge [loop above, looseness=12, in=60, out=120] (4);
    \path (4) edge [loop below, looseness=12, in=-60, out=-120] (4);
    
    \draw [->,draw=black] (2.2,0) -- (3.25,0);
    
	\draw [->,draw=black] (4.20,0) -- (5.25,0);
	\draw [->,draw=black] (6.20,0) -- (7.5,0);
	
	\draw [->,draw=black] (2.3,-0.25) to [bend left=300, looseness=0.8] (3.4,-0.25);
	\draw [->,draw=black] (2.3,-0.25) to [bend left=300, looseness=0.75] (5.5,-0.25);
	\draw [->,draw=black] (2.3,-0.25) to [bend left=300, looseness=0.75] (7.5,-0.25);
	
	\end{tikzpicture}
	\end{center}
\end{figure}

The set $Y=\{v_3,v_4,\cdots\}$ is hereditary and saturated in the graph $_{P'_{ppi}}E$ and clearly $B_Y=\{v_2\}$, therefore $I(P'_{ppi})\cong L_{K}(_{P'_{ppi}}E)$ is not purely infinite.
\end{example}

\begin{corollary}\label{Proposition:ppiinvariant}
Let $E$ be an arbitrary graph. Then the ideal $I(P_{ppi})$ is invariant under any ring isomorphism.
\end{corollary}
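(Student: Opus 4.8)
The plan is to mimic the invariance argument already carried out for $I(P_{ec}\cup P_{b^\infty})$ in Proposition \ref{Proposition:ecpbinfntyinvariant}, but now using the abstract characterization of $I(P_{ppi})$ as the \emph{largest} purely infinite ideal (Theorem \ref{Theorem:piisthemaximal}) rather than a graphical description. Let $\varphi\colon L_K(E)\to L_K(F)$ be any ring isomorphism. The key observation is that ``being the largest purely infinite ideal'' is a purely ring-theoretic property, so it is automatically preserved by $\varphi$, provided we can also argue that $\varphi$ maps purely infinite ideals to purely infinite ideals and that the image of the largest one is again the largest one.

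First I would recall that $I(P_{ppi}^E)$ is generated by the idempotents it contains (it is generated by vertices), hence by \cite[Corollary 2.9.11]{AAS} its image $\varphi(I(P_{ppi}^E))$ is a graded ideal of $L_K(F)$, so $\varphi(I(P_{ppi}^E))=I(H)$ for some hereditary saturated $H\subseteq F^0$ by \cite[Theorem 2.4.8]{AAS}. Next I would check that $\varphi$ carries the property ``purely infinite'' across: pure infiniteness of a ring (no quotient is a division ring; $b\in RaR$ implies $b=xay$) is defined entirely in ring-theoretic terms, and an ideal $I$ of a ring with local units, viewed as a ring in its own right, has these properties intrinsically; since $\varphi|_{I(P_{ppi}^E)}$ is a ring isomorphism onto $I(H)$, the ideal $I(H)$ is purely infinite in $L_K(F)$. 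By Theorem \ref{Theorem:piisthemaximal} applied to $F$, we get $I(H)\subseteq I(P_{ppi}^F)$. Running the same argument with $\varphi^{-1}$ and a purely infinite ideal of $L_K(F)$ shows $\varphi^{-1}(I(P_{ppi}^F))$ is a purely infinite ideal of $L_K(E)$, hence contained in $I(P_{ppi}^E)$ by Theorem \ref{Theorem:piisthemaximal} applied to $E$; thus $I(P_{ppi}^F)\subseteq \varphi(I(P_{ppi}^E))=I(H)$. Combining the two inclusions gives $\varphi(I(P_{ppi}^E))=I(P_{ppi}^F)$, which is the desired invariance.

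The only genuinely delicate point — and the step I expect to require the most care — is the claim that pure infiniteness of the \emph{ideal} $I(P_{ppi}^E)$, as opposed to of the whole algebra, is an intrinsic ring property transported by $\varphi$; one must be careful that the two defining conditions (no quotient being a division ring, and the divisibility condition $b\in RaR\Rightarrow b=xay$) are stated for the ring $I(P_{ppi}^E)$ itself and do not secretly refer to the ambient algebra. Since $I(P_{ppi}^E)$ is a ring with local units (it is again a Leavitt path algebra by Proposition \ref{Prop:PPIispurelyinfinite} together with \cite[Corollary 2.9.11]{AAS}), and $\varphi$ restricts to a ring isomorphism onto $\varphi(I(P_{ppi}^E))$, both conditions are preserved verbatim. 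Once this is granted, the proof is a short formal argument: apply Theorem \ref{Theorem:piisthemaximal} in both graphs and use the two inclusions obtained from $\varphi$ and $\varphi^{-1}$.
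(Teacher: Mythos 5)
Your proposal is correct and follows essentially the same route as the paper: show that $\varphi(I(P_{ppi}^E))$ is a purely infinite ideal of $L_K(F)$ by transporting the two defining conditions through the ring isomorphism $\varphi$ restricted to the ideal, invoke Theorem \ref{Theorem:piisthemaximal} in $F$ to get one inclusion, and repeat with $\varphi^{-1}$ for the other. The intermediate step identifying $\varphi(I(P_{ppi}^E))$ as $I(H)$ for a hereditary saturated $H$ is harmless but not needed, since Proposition \ref{Proposition:piisgraded} already guarantees any purely infinite ideal is of that form.
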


\begin{proof} Suppose that $E$ and $F$ are arbitrary graphs and that $\varphi: L_K(E) \rightarrow L_K(F)$ is a ring isomorphism. Denote $I:=I(P_{ppi}^E)$ and $I':=I(P_{ppi}^F)$.

First we show that $\varphi(I) \subseteq I'$. To have this, it is enough to prove that $\varphi(I)$ is a purely infinite ideal in $L_K(F)$ because of Theorem \ref{Theorem:piisthemaximal}. We check that the following two conditions (in the definition of purely infinite ring) are satisfied:

\begin{enumerate}
    \item No quotient of $\varphi(I)$ is a division ring, and
    \item whenever $a' \in \varphi(I)$ and $b' \in \varphi(I)a'\varphi(I)$, then $b'=x'a'y'$ for some $x', y' \in \varphi(I)$.
\end{enumerate}
For the first one, suppose on the contrary that there exits a quotient of $\varphi(I)$ which is a division ring, say $\varphi(I) / \varphi(J)$ where $J$ is an ideal of $I$. Since $\overline{\varphi} : I/J \rightarrow \varphi(I)/ \varphi(J)$ is an isomorphism then $I/J$ is a quotient of $I$ which is a division ring so we get a contradiction to the fact that $I$ is purely infinite.

For the second condition take $a' \in \varphi(I)$ and $b' \in \varphi(I)a'\varphi(I)$, and let $a \in I$ and $b \in L_K(E)$ be such that $\varphi(a)=a'$ and $\varphi(b)=b'$. Then $\varphi(b)\in \varphi(I)\varphi(a)\varphi(I)=\varphi(IaI)$, which implies $b\in IaI$. Now,  being $I$ purely infinite means that $b=xay$ for some $x, y\in I$. Then, taking $x'=\varphi(x)$ and $y'=\varphi(y)$ we obtain $b'=x'a'y'$.

Analogously we get $\varphi^{-1}(I') \subseteq I$ and, therefore,  $\varphi(I) = I'$ as desired.
\end{proof}

\section{The structure of the largest purely infinite ideal}\label{Sec:structurelpi}
In the previous section we established the existence of the largest purely infinite ideal of a Leavitt path algebra. 
The aim of this section is to deep into its structure. Concretely, we will prove that it is the direct sum of purely infinite simple ideals and purely infinite non-simple indecomposable ideals.
We start with some definitions we need.

\begin{definitions}
 \rm
 From the set of vertices in extreme cycles and from the set of vertices which are properly infinite, we pick up the following:
$$P_{ec}':=\{v \in P_{ec} \ \vert \ \text{there exists}\  u\in P_{ppi}\setminus P_{ec} \ \text{such that}\ u\geq v\}.$$
A cycle whose vertices are in $P_{pec}$ will be called a \emph{properly extreme cycle}. Note that extreme cycles are divided into two sets: those whose vertices are properly  infinite and the complement.
$$P_{pec}= P_{ec}\setminus P_{ec}'.$$
In the set of properly  infinite vertices, we remove those belonging to properly extreme cycles and denote it by $P'$, i.e.,
$$P':= P_{ppi} \setminus P_{pec}.$$
\end{definitions}

Cycles whose vertices are in $P'$ will produce (graded) ideals which are purely infinite and non-simple (moreover, we will see that they are also non-decomposable). The question which arises is how to relate cycles of this type which are in the same purely infinite ideal.   This is the reason because we establish the  relations given in the definitions below.

\begin{definitions}
 \rm
(i) (See \cite[Definitions 2.2]{CMMSS}).
Let $X'_{pec}$ be the set of all cycles whose vertices are in $P_{pec}$. We define in $X'_{pec}$ the following relation: given $c, d \in X'_{pec}$, we write $c \sim' d$ if $c$ and $d$ are connected. This is an equivalence relation. Denote the set of all equivalence classes by $X_{pec}= X'_{pec}/\sim$. If we want to emphasize the graph we are considering we write $X'_{pec}(E)$ and $X_{pec}(E)$ for $X'_{pec}$ and $X_{pec}$, respectively.
 
For any $c\in X'_{pec}$, let $\widetilde{c}$ denote the class of $c$, and use $\widetilde{c}^0$ to represent the set of all vertices belonging to the cycles which are in $\widetilde{c}$.

(ii)  Let $X'_{P'}$ be the set of all cycles whose vertices are in $P'$. We define in $X'_{P'}$ the following relation: given $c, d \in X'_{P'}$, we write $c \sim' d$ if $c$ and $d$ are connected. This relation is reflexive and symmetric, but not necessarily transitive. Now we define in $X'_{P'}$ the relation: $c \sim d$ if there are $c_1,\dots,  c_n\in X'_{P'}$ such that $c=c_1 \sim' c_2 \sim' \dots \sim' c_n=d$. This is an equivalence relation. Denote the set of all equivalence classes by $X_{P'}= X'_{P'}/\sim$. If we want to emphasize the graph we are considering we write $X'_{P'}(E)$ and $X_{P'}(E)$ for $X'_{P'}$ and $X_{P'}$, respectively.
 
For any $c\in X'_{P'}$, let $\widetilde{c}$ denote the class of $c$, and use $\widetilde{c}^0$ to represent the set of all vertices belonging to the cycles which are in $\widetilde{c}$.
 \end{definitions}

\begin{example}
\rm
Consider the following graph:
\begin{figure}[H]
	\begin{center}
	\begin{tikzpicture}
	[
	->,
	>=stealth',
	auto,node distance=3cm,
	thick,
	main node/.style={circle, draw, font=\sffamily\Large\bfseries}
	]
	
	\fill (0,0) circle (2pt) node[label={[label distance=-1ex]above:{$v_1$}}] (1) {};
    \fill (1.7,0) circle (2pt) node[label=right:{$v_2$}] (2) {};
    \fill (3.6,0) circle (2pt) node[label=right:{$v_3$}] (3) {};
    \fill (5.5,0) circle (2pt) node[label=right:{$v_4$}] (4) {};
    \fill (1.6,-2.0) circle (2pt) node[label=right:{$w_1$}] (5) {};
    \fill (-1.7,0) circle (2pt) node[label=above:{$w_2$}] (6) {};
    \draw[draw=black] (0.25,0) -- (1.5,0) node[above] at (0.9,0.03) {};
    
    \path (2) edge [loop above, looseness=12,in=60, out=120] (2) node[above] at (1.7,0.7) {$e_1$};
    
    \path (2) edge [loop below, looseness=12,in=-60, out=-120](2) node[right] at (1.8,-0.6) {$e_2$};
    \path (3) edge [loop above, looseness=12, in=60, out=120] (3) node[above] at (3.6,0.7) {$e_3$};;
    \path (3) edge [loop below,  looseness=12, in=-60, out=-120] (3) node[right] at (3.7,-0.6) {$e_4$};
    \path (4) edge [loop above,  looseness=12, in=60, out=120] (4)node[above] at (5.5,0.7) {$e_5$};
    \path (4) edge [loop below,  looseness=12, in=-60, out=-120] (4) node[right] at (5.6,-0.6) {$e_6$};
    \path (5) edge [loop above,  looseness=12, in=60, out=120] (5)node[right] at (1.7,-1.4) {$e_7$};;
    \path (5) edge [loop below,  looseness=12, in=-60, out=-120] (5) node[right] at (1.7,-2.7) {$e_8$};
    \path (6) edge [loop below,  looseness=12, in=-60, out=-120] (6);
    
    \draw [->,draw=black] (2.4,0)--(3.5,0) ;

	\draw [->,draw=black] (5.4,0)--(4.3,0);
	
	\draw [->,draw=black] (0.1,-0.2)--(1.4,-1.9);
	\draw [->,draw=black] (-0.2,0)--(-1.5,0);

	\end{tikzpicture}
	\end{center}
\end{figure}
Then, $P_{ec}=\{v_3,w_1\}$, $P_{ppi}=\{v_2,v_3,v_4,w_1\}$, $P'_{ec}=\{v_3\}$, $P_{pec}=\{w_1\}$ and  $P'=\{v_2,v_3,v_4\}$.
Moreover,
$X'_{P'}=\{e_1,e_2,e_3,e_4,e_5,e_6\}$ and $e_1\sim e_2\sim e_3\sim e_4\sim e_5\sim e_6 $, so $X_{P'}=\{\widetilde{e_1}\}$. Finally, note that
$\widetilde{e_1}^0=\{v_2,v_3,v_4\}$.

\end{example}

\begin{remark}
\rm
Let $E$ be an arbitrary graph and use the definitions before.
\begin{enumerate}[\rm (i)]
    \item For any $c\in X'_{P'}$ we have that $\widetilde{c}^0$ is not necessarily a hereditary subset (see Example \ref{ExRem}).
    \item Given $c, d\in X'_{P'}$ we have that $\widetilde{c}\neq \widetilde{d}$ if and only if $\widetilde{c}^0\cap \widetilde{d}^0=\emptyset$.
\end{enumerate}
\end{remark}

\begin{example}\label{ExRem}
\rm
Consider the graph:
\begin{figure}[H]
	\begin{center}
	\begin{tikzpicture}
	[
	->,
	>=stealth',
	auto,node distance=3cm,
	thick,
	main node/.style={circle, draw, font=\sffamily\Large\bfseries}
	]
	
	\fill (0,0) circle (2pt) node[label={[label distance=-1ex]left:{$v_1$}}] (1) {};
    \fill (1.7,0) circle (2pt) node[label=above:{$v_2$}] (2) {};
    \fill (3.6,0) circle (2pt) node[label=right:{$v_3$}] (3) {};
    \fill (5.5,0) circle (2pt) node[label=right:{$v_4$}] (4) {};
    
    \draw[draw=black] (0.25,0) -- (1.5,0) node[above] at (0.9,0.03) {};
    
    \path (1) edge [loop above, looseness=12,in=60, out=120] (1) node[above] at (0,0.7) {$e_1$};
    \path (1) edge [loop below, looseness=12,in=-60, out=-120](1) node[right] at (0.1,-0.6) {$e_2$};
    
    \path (3) edge [loop above, looseness=12, in=60, out=120] (3) node[above] at (3.6,0.7) {$e_3$};;
    \path (3) edge [loop below,  looseness=12, in=-60, out=-120] (3) node[right] at (3.7,-0.6) {$e_4$};
    
    \path (4) edge [loop above,  looseness=12, in=60, out=120] (4)node[above] at (5.5,0.7) {$e_5$};
    \path (4) edge [loop below,  looseness=12, in=-60, out=-120] (4) node[right] at (5.6,-0.6) {$e_6$};

    \draw [->,draw=black] (1.9,0)--(3.4,0) ;

	\draw [->,draw=black] (5.4,0)--(4.3,0);
	
	\end{tikzpicture}
	\end{center}
\end{figure}
Then, $P_{ec}=\{v_3\}$, $P_{ppi}=\{v_1,v_2,v_3,v_4\}$, $P'_{ec}=\{v_3\}$, $P_{pec}=\emptyset$ and  $P'=\{v_1,v_2,v_3,v_4\}$.
Moreover,
$X'_{P'}=\{e_1,e_2,e_3,e_4,e_5,e_6\}$ and $e_1\sim e_2\sim e_3\sim e_4\sim e_5\sim e_6 $, so $X_{P'}=\{\widetilde{e_1}\}$. Note that
$\widetilde{e_1}^0=\{v_1,v_3,v_4\}$, but $v_2\notin \widetilde{e_1}^0$.
\end{example}

The result that follows describes each piece into which the ideal generated by $P'$ decomposes.

\begin{proposition}\label{prop:NonSimplePI}
Let $E$ be an arbitrary graph and $K$ a field. For every cycle $c\in X'_{P'}$, the ideal $I(\widetilde{c}^0)$ is isomorphic to a purely infinite non simple Leavitt path algebra which is not decomposable. Concretely, it is isomorphic to $L_K({_H}E)$, where $H=T(\widetilde{c}^0)$. 
\end{proposition}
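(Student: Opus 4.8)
The plan is to realize $I(\widetilde c^0)$ as a Leavitt path algebra of a hedgehog-type graph, verify it is purely infinite via the graph-theoretic criterion already in hand, and then rule out both simplicity and decomposability by exhibiting the right graph-theoretic obstructions. First I would set $H:=T(\widetilde c^0)$, the tree of $\widetilde c^0$, which is hereditary by construction; I then need to check that $H$ is also saturated, or rather work with its saturated closure and argue that it adds nothing that changes the conclusion — more precisely, since every vertex of $\widetilde c^0$ lies on a cycle in $P'\subseteq P_{ppi}$ and $P_{ppi}$ is hereditary and saturated (Lemma \ref{lem:tpiher}(i)), we get $H\subseteq P_{ppi}$, so $H$ inherits the good local structure and $T(v)$ has no breaking vertices for any $v\in H$. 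Applying \cite[Theorem 2.5.19]{AAS} gives $I(H)\cong L_K({}_HE)$. That $I(\widetilde c^0)=I(H)$ follows because $T(\widetilde c^0)$ is generated as a hereditary set by $\widetilde c^0$, and graded ideals are determined by their hereditary saturated closures.

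Next I would prove pure infiniteness by invoking Proposition \ref{prop:pilpa}: I must show $B_Y=\emptyset$ for every hereditary saturated $Y\subseteq {}_HE^0$, and that every vertex of ${}_HE$ is properly infinite. The absence of breaking vertices in ${}_HE$ follows from the absence of breaking vertices in $H$ (as in the proof of Proposition \ref{Prop:PPIispurelyinfinite}). For the properly-infinite condition: a vertex $v\in H$ satisfies $v\in P_{pi}$ directly since $H\subseteq P_{ppi}\subseteq P_{pi}$; a vertex coming from a path in $F_E(H)$ connects by a single edge to a vertex $w\in H$, and since $w\in P_{pi}$ we can pull back the witnesses $w_1,\dots,w_n$ with $|{\rm CSP}(w_i)|\geq 2$ into $T(v)$ — this is exactly the argument in Proposition \ref{Prop:PPIispurelyinfinite}. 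Hence $L_K({}_HE)$ is purely infinite.

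The remaining two points — non-simplicity and non-decomposability — are where the real content lies, and I expect non-decomposability to be the main obstacle. For non-simplicity: because $c\sim' d$ only means $c$ and $d$ are connected (not necessarily via an extreme cycle), the graph ${}_HE$ will fail Condition (K) in a way that produces proper nonzero graded ideals; concretely, I would locate a cycle $d$ in $\widetilde c^0$ and a vertex on it that does \emph{not} connect back to every other cycle in the class in a way that makes every vertex return, so that $T(d^0)$ is a proper hereditary subset of $H$ — giving a proper nonzero ideal $I(T(d^0))\subsetneq I(H)$. (If $\widetilde c^0$ itself were an extreme cycle class this would fail, but by definition $c\in X'_{P'}$ means its vertices lie in $P'=P_{ppi}\setminus P_{pec}$, so $\widetilde c^0$ is \emph{not} a union of extreme cycles; this is precisely what forces non-simplicity.) For non-decomposability I would argue that $\widetilde c^0$ is a single $\sim$-equivalence class: if $I(H)=A\oplus B$ as a direct sum of ideals, then $A$ and $B$ correspond to disjoint hereditary saturated pieces, each a union of trees of sub-collections of cycles; since any two cycles in $\widetilde c$ are joined by a chain $c=c_1\sim' c_2\sim'\cdots\sim' c_n=d$ of \emph{mutually connected} cycles, the vertices on consecutive $c_i$ lie in a common tree, hence in the same summand — so all of $\widetilde c^0$ lies in one summand, forcing the other to be zero. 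The delicate point to get right is the interaction between ``connected'' and ``lies in the tree of'': I would need the observation that if $c\sim' d$ then there is a path from $c^0$ into $d^0$ and back, so $c^0\cup d^0\subseteq T(c^0)$ and likewise $\subseteq T(d^0)$, placing them in the same hereditary piece; chaining this through the equivalence relation finishes the argument.
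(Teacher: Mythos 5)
Your reduction to the hedgehog graph and your verification of pure infiniteness follow the paper's route exactly and are fine. The non-simplicity step, however, contains a genuine error. For \emph{any} cycle $d$ with $\widetilde{d}=\widetilde{c}$ one has $T(d^0)=T(\widetilde{c}^0)=H$: the chain $d=c_1\sim' c_2\sim'\cdots\sim' c_n$ consists of \emph{mutually} connected cycles, so $c_{i+1}^0\subseteq T(c_i^0)$ and hence $\widetilde{c}^0\subseteq T(d^0)$, forcing $T(d^0)=H$. Thus the ideal $I(T(d^0))$ you propose as a proper ideal is all of $I(H)$, and no proper ideal is produced. The paper's proper ideal is obtained differently: one picks a non-extreme cycle $d\in\widetilde{c}$ and an edge $e$ with $s(e)\in d^0$ whose range does not lead back to the class, and then $I(r(e))$ is a nonzero ideal of $I(H)$ which is proper because $\widetilde{c}^0\not\subseteq \overline{T(r(e))}$. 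Note also that your parenthetical justification is based on a misreading of the definitions: $P'=P_{ppi}\setminus P_{pec}$ still contains $P_{ec}'$, so a class in $X_{P'}$ may well contain extreme cycles; what one needs (and what the paper simply asserts when it ``chooses a non-extreme cycle $d$ with $\widetilde{d}=\widetilde{c}$'') is the existence of at least one \emph{non-extreme} cycle in the class.

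Your non-decomposability argument also diverges from the paper's and is incomplete as sketched. If $I(H)=A\oplus B$, you would first have to justify that $A$ and $B$ are graded (equivalently, generated by the vertices they contain) before speaking of ``disjoint hereditary saturated pieces''; and even granting that and that all of $\widetilde{c}^0$ lands in one summand, the other summand could a priori be supported on vertices of $H\setminus\widetilde{c}^0$ (the ranges of exits), so it does not immediately vanish --- you need the extra observation that every vertex of $H$ lies in the tree of $\widetilde{c}^0$ and hence in the same summand. The paper avoids these issues by invoking the decomposability criterion of \cite[Theorem 5.2 (iii)]{CMMS}: for every nonempty proper hereditary and saturated subset $Y'$ of $(_HE)^0$ it produces an infinite path $ddd\ldots$ lying outside $Y'$ all of whose vertices connect to $Y'$, which violates that criterion. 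Your underlying intuition (the cycles in $\widetilde{c}$ cannot be separated) is the right one, but the bridge from that intuition to a ring-theoretic statement about direct summands is precisely where the cited criterion is doing the work.
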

\begin{proof}
Since every vertex in $H$ is properly infinite and $B_H=\emptyset$, by \cite[Theorem 3.8.16]{AAS}, the Leavitt path algebra $L_K({_H}E)$ is purely infinite.

To see that $L_K({_H}E)$ is not simple, equivalently, that $I(\widetilde{c}^0)$ is not simple, choose a non-extreme cycle $d$ such that $\widetilde{d}= \widetilde{c}$. Take $e\in E^1$ such that $u:=s(e)\in d^0$ but $T(u) \cap c^0 = \emptyset$. Then $0\neq I(r(e))\subsetneq I(\widetilde{c}^0)$.

Our next aim is to prove that $L_K(_{H}E)$ is not decomposable showing that condition (a) in \cite[Theorem 5.2 (iii)]{CMMS} is not satisfied. Let $Y'$ be a nonempty, proper, hereditary and saturated subset of $(_{H}E)^0$. We claim that there is a nonempty hereditary and saturated subset $Y$ of $E^0$ such that $Y\subseteq \overline{H}$. Indeed, the ideal generated by $Y'$ in $L_K(_{H}E)$, call it $J'$, is nonzero, graded and proper. Using the isomorphism between  $L_K(_{H}E)$ and $I(H)$ (see \cite[Theorem 2.5.22]{AAS}), we can say that $J'$ is isomorphic to a graded ideal of $I(H)$, call it $J$, which does not contain breaking vertices. Since $I(H)$ is a ring with local units, then $J$ is a graded ideal of $L_K(E)$ without breaking vertices. By the Structure Theorem for  Graded Ideals \cite[Theorem 2.5.8]{AAS} there exists a nonempty, hereditary and saturated set $Y\in E^0$ such that $J=I(Y)$; moreover, since $J\subseteq I(H)$, we have $\emptyset \neq Y\subsetneq H$ (see \cite[Proposition 2.5.4]{AAS}). We are going to prove:

$(\ast)$ There is a cycle $d$ in $L_K(E)$ such that $d^0\not\subseteq Y$ and $d$ is connected to $Y$.

Let $u\in Y\subsetneq \overline{H}$. Recall that $\overline{H} = \cup_{n\geq 0} H_n$. If $u\in H_0$ then $u\in \widetilde{c}^0$. Let $d$ be a cycle such that $\widetilde{d}=\widetilde{c}$ and there exists an edge $e$ satisfying $s(e)\in d^0$ and $r(e)=u$. Then $ddd...$ is an infinite path whose vertices are not contained in $Y$ and $d$ is connected to $Y$. If $u\in H_1$, then $u$ is the source of an edge $f$ such that $r(f)\in \widetilde{c}^0$. As before, we can find a cycle $d$, with
 $\widetilde{d}=\widetilde{c}$, and an edge $g$ satisfying $s(g)\in d^0$ and $r(g)=v$. Then $ddd...$ is an infinite path whose vertices are not contained in $Y$ and $d$ is connected to $v$, which is in $Y$ because $Y$ is hereditary and $u\in Y$. By induction we prove the statement.
 
 Once we have that $(\ast)$ is true, this provides an infinite path $ddd...$ in $_HE$ whose vertices are outside from $Y'$ but all of them are connected to $Y'$. This means that (2) in  \cite[Theorem 5.2 (iii)]{CMMS} is not satisfied and, therefore, $L_K(_HE)$ is not decomposable.

\end{proof}
\begin{example}
\rm 
In Example \ref{ExRem}, the ideal $I(\widetilde{e_1}^0)$ is isomorphic to $L_K({_H}E)$, where $H=\{v_2,v_3,v_4\}=P'$ and the graph ${_H}E$ is:

\begin{figure}[H]
	\begin{center}
	\begin{tikzpicture}
	[
	->,
	>=stealth',
	auto,node distance=3cm,
	thick,
	main node/.style={circle, draw, font=\sffamily\Large\bfseries}
	]
	
	\fill (0,0) circle (2pt) node[label={[label distance=-1ex]above:{$v_1$}}] (1) {};
    \fill (1.7,0) circle (2pt) node[label=right:{$v_2$}] (2) {};
    \fill (3.6,0) circle (2pt) node[label=right:{$v_3$}] (3) {};
    \fill (5.5,0) circle (2pt) node[label=right:{$v_4$}] (4) {};
    
    \draw[draw=black] (0.25,0) -- (1.5,0) node[above] at (0.9,0.03) {};
    
    \path (2) edge [loop above, looseness=12,in=60, out=120] (2) node[above] at (1.7,0.7) {$e_1$};
    
    \path (2) edge [loop below, looseness=12,in=-60, out=-120](2) node[right] at (1.8,-0.6) {$e_2$};
    \path (3) edge [loop above, looseness=12, in=60, out=120] (3) node[above] at (3.6,0.7) {$e_3$};;
    \path (3) edge [loop below,  looseness=12, in=-60, out=-120] (3) node[right] at (3.7,-0.6) {$e_4$};
    \path (4) edge [loop above,  looseness=12, in=60, out=120] (4)node[above] at (5.5,0.7) {$e_5$};
    \path (4) edge [loop below,  looseness=12, in=-60, out=-120] (4) node[right] at (5.6,-0.6) {$e_6$};

    \draw [->,draw=black] (2.4,0)--(3.5,0) ;

	\draw [->,draw=black] (5.4,0)--(4.3,0);

	\end{tikzpicture}
	\end{center}
\end{figure}
$L_K({_H}E)$ is purely infinite and non simple Leavitt path algebra which is not decomposable.
\end{example}

In the following result we are using the notation introduced in \cite[Definitions 2.2]{CMMSS}.

\begin{theorem}\label{theor:biggestPIideal}
Let $E$ be an arbitrary graph and $K$ be a field. Then
$I(P_{ppi}) =  I(P_{pec}) \oplus I(P')$. Moreover, 
$$I(P_{pec}) = \oplus_{\widetilde{c}\in X_{pec}}I(\widetilde{c}^0) \text{ and } I(P') = \oplus_{\widetilde{c}\in X_{P'}}I(\widetilde{c}^0),$$
where every $I(\widetilde{c}^0)$ for $\widetilde{c}\in X_{pec}$  is isomorphic to a Leavitt path algebra which is purely infinite simple, and every $I(\widetilde{c}^0)$ for $\widetilde{c}\in X_{P'}$  is isomorphic to a Leavitt path algebra which is purely infinite non simple and non decomposable.
\end{theorem}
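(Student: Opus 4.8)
The plan is to decompose the largest purely infinite ideal $I(P_{ppi})$ in two stages. First, I would establish the direct sum decomposition $I(P_{ppi}) = I(P_{pec})\oplus I(P')$, using that $P_{ppi} = P_{pec}\sqcup P'$ as sets of vertices (by the definitions of $P'$ and $P_{pec}$). The key point here is that there are no edges from a vertex of $P'$ into a properly extreme cycle and vice versa in a way that would make the two ideals overlap: more precisely, one must check $I(P_{pec})\cap I(P')=0$ and $I(P_{pec})+I(P')=I(P_{ppi})$. For the sum, note that $P_{pec}\cup P' = P_{ppi}$ generates $I(P_{ppi})$. For the intersection, since both $P_{pec}$ and $P'$ are hereditary (the former because each $\widetilde c^0$ with $\widetilde c\in X_{pec}$ consists of extreme cycles together with their trees — this is exactly the set-up of \cite[Definitions 2.2]{CMMSS} — and the latter as a difference of hereditary sets suitably checked), the intersection $I(H_1)\cap I(H_2)=I(H_1\cap H_2)$ for hereditary saturated $H_i$, and $P_{pec}\cap P'=\emptyset$; one also has to verify the saturations do not reintroduce common vertices, which follows because a vertex saturating into $P_{pec}$ cannot emit into $P'$ (extreme cycles without exits into the properly-infinite-only region) and conversely. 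I would invoke \cite[Proposition 2.5.4]{AAS} together with the lattice structure of graded ideals to make this precise.

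Next I would treat each summand separately. For $I(P_{pec})$: the set $P_{pec}$ consists of vertices in extreme cycles all of whose tree vertices are \emph{not} in $P_{ppi}\setminus P_{ec}$, so $P_{pec}$ behaves exactly like the set of extreme-cycle vertices analyzed in \cite{CMMSS}. The decomposition $I(P_{pec})=\oplus_{\widetilde c\in X_{pec}}I(\widetilde c^0)$ and the fact that each $I(\widetilde c^0)$ is purely infinite simple is then essentially \cite[Theorem 2.9 or the relevant decomposition result]{CMMSS} applied to this set; the connectedness equivalence relation $\sim$ on $X'_{pec}$ is what groups extreme cycles into the simple pieces, and distinct classes give orthogonal ideals (as in the remark preceding the theorem, $\widetilde c\neq\widetilde d$ iff $\widetilde c^0\cap\widetilde d^0=\emptyset$). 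For $I(P')$: I would use Proposition \ref{prop:NonSimplePI}, which already establishes that each $I(\widetilde c^0)$ for $\widetilde c\in X'_{P'}$ is isomorphic to $L_K({}_HE)$ with $H=T(\widetilde c^0)$, and is purely infinite, non-simple and non-decomposable. It then remains to show $I(P')=\oplus_{\widetilde c\in X_{P'}}I(\widetilde c^0)$: the sum is all of $I(P')$ because every vertex in $P'$ lies on (or saturates/connects into the tree of) some cycle with all vertices in $P'$ — here one uses that a properly infinite vertex, by definition, lies in the hereditary-saturated closure of finitely many vertices each with $|{\rm CSP}|\geq 2$, i.e. each on a cycle, forcing it into $T(\widetilde c^0)$ for some class; and the sum is direct because distinct classes $\widetilde c\neq\widetilde d$ have $T(\widetilde c^0)\cap T(\widetilde d^0)$ containing no cycle, hence the corresponding graded ideals have zero intersection.

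The step I expect to be the main obstacle is proving that the sum $\sum_{\widetilde c\in X_{P'}}I(\widetilde c^0)$ is \emph{direct} and exhausts $I(P')$, because — unlike the extreme-cycle case — the sets $\widetilde c^0$ need not be hereditary (Example \ref{ExRem}), and the relation $\sim$ on $X'_{P'}$ is the transitive closure of a non-transitive relation, so the combinatorics of which cycles land in which class is more delicate. The resolution is to pass to the hereditary saturated closures $\overline{\widetilde c^0}$ (equivalently to $T(\widetilde c^0)$ and its saturation): one shows $\overline{\widetilde c^0}\cap\overline{\widetilde d^0}=\emptyset$ for $\widetilde c\neq\widetilde d$ by observing that any common vertex would connect to cycles in both classes, and a vertex that connects to two cycles which are themselves connected through it would force those cycles into the same $\sim$-class — contradiction; and $\bigcup_{\widetilde c}\overline{\widetilde c^0}=\overline{P'}$ because $P'\subseteq\bigcup_{\widetilde c}\overline{\widetilde c^0}$ (every properly infinite vertex not in a properly extreme cycle connects down to a cycle all of whose vertices are properly infinite and not in $P_{pec}$, i.e. a cycle in some $\widetilde c$), and the reverse inclusion is clear. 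Then $I(P')=I(\overline{P'})=\oplus_{\widetilde c}I(\overline{\widetilde c^0})=\oplus_{\widetilde c}I(\widetilde c^0)$, where the last equality is because $\widetilde c^0$ and its closure generate the same ideal. A parallel but easier argument handles $P_{pec}$, and combining everything with the first-stage decomposition completes the proof.
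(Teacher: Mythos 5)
Your overall route is the same as the paper's: decompose $P_{ppi}=P_{pec}\sqcup P'$ to obtain $I(P_{ppi})=I(P_{pec})\oplus I(P')$ (the paper disposes of this in one line via \cite[Proposition 2.4.7]{AAS}, since the two sets are disjoint and hereditary), then quote \cite[Proposition 2.6]{CMMSS} for the extreme--cycle summand and Proposition \ref{prop:NonSimplePI} for the summands coming from $P'$. You are also right to single out $I(P')=\oplus_{\widetilde{c}\in X_{P'}}I(\widetilde{c}^0)$ as the step needing real work; the paper passes over it very quickly. However, two of the sub-arguments you supply for that step do not hold as stated.

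First, your justification of directness --- ``any common vertex would connect to cycles in both classes, and a vertex that connects to two cycles which are themselves connected through it would force those cycles into the same $\sim$-class'' --- conflates having a common ancestor with being connected: $u\geq c^0$ and $u\geq d^0$ does not connect $c$ to $d$. The argument is valid for $u\in T(\widetilde{c}^0)$, because then some cycle of $\widetilde{c}$ reaches $u$ and hence reaches a cycle of $\widetilde{d}$; but for vertices added by saturation one needs the inductive statement that every $u\in\overline{\widetilde{c}^0}$ connects to $\widetilde{c}^0$ and, moreover, every cycle contained in $T(u)$ with vertices in $P'$ already lies in $\widetilde{c}$. One must also check disjointness of $\overline{\widetilde{c}^0}$ against $\overline{\bigcup_{\widetilde{d}\neq\widetilde{c}}\widetilde{d}^0}$, not merely pairwise. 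Second, the identity $\bigcup_{\widetilde{c}}\overline{\widetilde{c}^0}=\overline{P'}$ is false in general: a regular vertex of $P'$ whose edges land in two different classes belongs to the closure of their union but to neither individual closure (take $v$ with exactly two edges, to $u_1$ and to $u_2$, each $u_i$ carrying two loops and nothing else; then $v\in P'$ but $\overline{\widetilde{c}^0}=\{u_1\}$ and $\overline{\widetilde{d}^0}=\{u_2\}$). What you need, and what your proper-infiniteness argument actually proves, is $\overline{\bigcup_{\widetilde{c}}\widetilde{c}^0}=\overline{P'}$; this suffices because $I$ of a union of hereditary sets is the sum of the corresponding ideals, so $I(P')=\sum_{\widetilde{c}}I(\widetilde{c}^0)$ follows. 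With these two repairs your argument goes through and is in fact more detailed than the proof in the paper.
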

\begin{proof}
Decompose $P_{ppi}=P_{pec}\sqcup {P'}$. Then \cite[Proposition 2.4.7]{AAS} implies $I(P_{ppi})=I(P_{pec})\oplus I(P')$.
By \cite[Proposition 2.6]{CMMSS} we have that $I(P_{pec})= \oplus_{\widetilde{c}\in X_{pec}}I({\widetilde c}^0)$, where every $I({\widetilde c}^0)$ is purely infinite and simple. By Proposition \ref{prop:NonSimplePI} we have $I(P') = \oplus_{\widetilde{c}\in X_{P'}}I(\widetilde{c}^0)$, where each $I(\widetilde{c}^0)$ is purely infinite non simple and non decomposable. This completes the proof.
\end{proof}

\begin{corollary}\label{cor:pii} Let $I$ be a purely infinite ideal of a Leavitt path algebra $L_K(E)$.
\begin{enumerate}[\rm(i)]
    \item If $I$ simple, then it is contained in $I(P_{pec})$. More concretely, $I=I(\widetilde{c}^0)$, for $c$ an extreme cycle such that $c^0\subseteq P_{pec}$.
    \item If $I$ is not simple and not decomposable, then it is contained in $I(P')$. More concretely, $I=I(\widetilde{c}^0)$, where  $c$ is an extreme cycle such that  $c^0\subseteq P'$.
    \item $I=\oplus_{i\in \Lambda}I(\widetilde{c_i}^0)$, where $c_i$ is an extreme cycle such that $c_i^0\subseteq P_{pec}$ or $c_i^0\subseteq P'$.
\end{enumerate}
\end{corollary}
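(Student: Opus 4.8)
\textbf{Proof plan for Corollary \ref{cor:pii}.}

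The plan is to deduce all three items directly from Theorem \ref{theor:biggestPIideal} together with Theorem \ref{Theorem:piisthemaximal}, reducing everything to elementary facts about direct sums. First I would invoke Theorem \ref{Theorem:piisthemaximal}: since $I$ is purely infinite, it is contained in the largest purely infinite ideal $I(P_{ppi})$. By Theorem \ref{theor:biggestPIideal} we may write $I(P_{ppi}) = \bigoplus_{\widetilde c\in X_{pec}} I(\widetilde c^0) \oplus \bigoplus_{\widetilde c\in X_{P'}} I(\widetilde c^0)$, where each summand indexed by $X_{pec}$ is purely infinite simple and each summand indexed by $X_{P'}$ is purely infinite non-simple and non-decomposable. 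By Proposition \ref{Proposition:piisgraded}, $I$ is graded, so $I=I(H)$ for a hereditary saturated $H\subseteq E^0$; since $I\subseteq I(P_{ppi})$ we have $H\subseteq P_{ppi}$, and because $H$ is hereditary and saturated inside $P_{ppi}$ it must decompose compatibly with the above direct sum, i.e. $H = \bigsqcup_{i\in\Lambda} \widetilde{c_i}^0$ for some subfamily of the cycle classes occurring in $X_{pec}\cup X_{P'}$, whence $I = \bigoplus_{i\in\Lambda} I(\widetilde{c_i}^0)$. This is exactly item (iii). The key observation making this work is that the $I(\widetilde c^0)$ are precisely the minimal nonzero graded (equivalently, two-sided) pieces of $I(P_{ppi})$: any graded ideal of $I(P_{ppi})$ is a direct sum of a subfamily of them, because $I(P_{pec})$ is a direct sum of simple ideals (so its graded ideals are sub-sums) and each $I(\widetilde c^0)$ with $\widetilde c\in X_{P'}$ is indecomposable, and a proper graded subideal of an indecomposable purely infinite ideal that were a proper nonzero summand would contradict indecomposability.

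For item (i), suppose in addition that $I$ is simple. Then the decomposition $I=\bigoplus_{i\in\Lambda} I(\widetilde{c_i}^0)$ from (iii) can have only one summand (a nontrivial direct sum of nonzero ideals is never simple), so $I = I(\widetilde c^0)$ for a single cycle class. If this $\widetilde c$ belonged to $X_{P'}$, then $I(\widetilde c^0)$ would be non-simple by Proposition \ref{prop:NonSimplePI}, a contradiction; hence $\widetilde c\in X_{pec}$, i.e. $c$ is an extreme cycle with $c^0\subseteq P_{pec}$, and $I\subseteq I(P_{pec})$. For item (ii), suppose $I$ is not simple and not decomposable. By (iii), $I=\bigoplus_{i\in\Lambda} I(\widetilde{c_i}^0)$; indecomposability forces $|\Lambda|=1$, so $I = I(\widetilde c^0)$ for a single cycle class. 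If $\widetilde c\in X_{pec}$ then $I(\widetilde c^0)$ would be simple, contradicting the hypothesis; hence $\widetilde c\in X_{P'}$, so $c$ is an extreme cycle with $c^0\subseteq P'$ and $I\subseteq I(P')$.

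The only genuine point requiring care — the main obstacle — is justifying that a graded ideal $I(H)$ with $H\subseteq P_{ppi}$ really is a sub-direct-sum of the $I(\widetilde c^0)$'s rather than something that cuts across several of them. I would handle this by transporting $H$ through the direct sum decomposition $E^0\cap I(P_{ppi}) = (E^0\cap I(P_{pec})) \sqcup (E^0\cap I(P'))$ of Theorem \ref{theor:biggestPIideal}, using that $\overline{P_{pec}}$ and $\overline{P'}$ are disjoint, and then within $I(P_{pec})$ invoking that a direct sum of simple ideals has only direct-sum subideals, and within $I(P')=\bigoplus_{\widetilde c\in X_{P'}} I(\widetilde c^0)$ that the $I(\widetilde c^0)$ are pairwise in distinct summands with disjoint vertex sets $\widetilde c^0$ (Remark following Example \ref{ExRem}(ii)) and each is indecomposable, so a graded ideal contained in $I(P')$ is a sum of some of them together with possibly a proper graded ideal inside one $I(\widetilde c^0)$; but the latter, being a proper nonzero direct summand, would contradict non-decomposability of $I(\widetilde c^0)$, so in fact $I(H)$ is a full sub-sum. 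This gives (iii), and (i), (ii) follow as above.
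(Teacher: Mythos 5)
Your proposal follows the paper's proof essentially step for step: both deduce (iii) by combining Theorem \ref{Theorem:piisthemaximal} (so $I\subseteq I(P_{ppi})$), Proposition \ref{Proposition:piisgraded} (so $I=I(H)$ with $H$ hereditary and saturated, $H\subseteq P_{pec}\sqcup P'$), and the decomposition of Theorem \ref{theor:biggestPIideal}, and then obtain (i) and (ii) from (iii) by observing that simplicity or indecomposability forces $|\Lambda|=1$ and that Proposition \ref{prop:NonSimplePI} versus simplicity of the $X_{pec}$-summands identifies which family the single class lies in.

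The one place you diverge is that you try to justify the step the paper simply asserts, namely that a hereditary and saturated $H\subseteq P_{ppi}$ must equal a disjoint union of full classes $\widetilde{c_i}^0$; you rightly flag this as the only delicate point, but your argument for it does not close. You claim that a proper nonzero graded ideal sitting inside an indecomposable $I(\widetilde{c}^0)$ ``would be a proper nonzero summand'' and hence contradict indecomposability --- but a graded subideal is not automatically a direct summand, so indecomposability alone rules out nothing here. Indeed the relevant phenomenon is visible in Example \ref{ExRem}: $\{v_2,v_3\}$ is hereditary and saturated, contained in $P'$, generates a purely infinite ideal, yet is not a union of classes ($\widetilde{e_1}^0=\{v_1,v_3,v_4\}$), so the assertion needs a genuinely different justification (or a more careful reading of what $I(\widetilde{c}^0)$ ranges over). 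Since the paper's own proof passes over exactly the same point with ``Apply this to get $H=\sqcup_{i\in\Lambda}\widetilde{c_i}^0$,'' your write-up is no less rigorous than the original, but you should not present the indecomposability remark as if it settled the matter.
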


\begin{proof} 
Since $I$ is a purely infinite ideal of $L_K(E)$ and $I(P_{ppi})$ is the largest purely infinite ideal in the Leavitt path algebra (Theorem \ref{Theorem:piisthemaximal}), then $I \subseteq I(P_{ppi})$. By Theorem \ref{theor:biggestPIideal} we may write $I(P_{ppi})= I(P_{pec}) \oplus I(P')$. Moreover, $I=I(H)$ for some hereditary and saturated subset $H \subseteq E^0$ by Proposition \ref{Proposition:piisgraded}. Using \cite[Theorem 2.5.8]{AAS} we have $H \subseteq P_{pec} \sqcup P'$.

We know that $P_{pec} = \sqcup_{\widetilde{c} \in X_{pec}} \widetilde{c}^0$, where every $I(\widetilde{c}^0)$ is purely infinite simple, and that $P' = \sqcup_{\widetilde{c} \in X_{P'}} \widetilde{c}^0$, where every $I(\widetilde{c}^0)$ is purely infinite non simple and non decomposable. Apply this to get $H= \sqcup_{i\in \Lambda} \widetilde{c_i}^0$,  where $c_i$ is an extreme cycle such that $c_i^0\subseteq P_{pec}$ or $c_i^0\subseteq P'$. Now, use \cite[Proposition 2.4.7]{AAS} to get  $I=\oplus_{i\in \Lambda}I(\widetilde{c_i}^0)$. This proves (iii).

If $I$ is simple or indecomposable, then $\vert \Lambda \vert=1$. This implies $I$ is as stated in (i) when $I$ is simple, or as stated in (ii) if it is indecomposable and non simple.
\end{proof}

\begin{corollary}
Let $E$ be an arbitrary graph. Then the ideal $I(P^E_{ec})$ is invariant under any ring isomorphism between Leavitt path algebras.
\end{corollary}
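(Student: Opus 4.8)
The plan is, for any ring isomorphism $\varphi\colon L_K(E)\to L_K(F)$, to show $\varphi(I(P_{ec}^E))=I(P_{ec}^F)$; replacing $\varphi$ by $\varphi^{-1}$, it suffices to prove the inclusion $\varphi(I(P_{ec}^E))\subseteq I(P_{ec}^F)$. First I would record the basic shape of the image: since $P_{ec}^E$ consists of idempotents, $I(P_{ec}^E)$ is graded, and $\varphi$ sends idempotents to idempotents, so $\varphi(I(P_{ec}^E))$ is again generated by idempotents and hence graded by \cite[Corollary 2.9.11]{AAS}; thus $\varphi(I(P_{ec}^E))=I(H)$ for a hereditary and saturated $H\subseteq F^0$ by \cite[Theorem 2.4.8]{AAS}. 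Moreover $I(P_{ec}^E)$ is purely infinite by Theorem \ref{TheoremPecpurelyinf}, and purely infiniteness is a ring-theoretic property, so $I(H)$ is a purely infinite ideal of $L_K(F)$.

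I would then cut the problem down using the two invariance results already available. Corollary \ref{Proposition:ppiinvariant} gives $I(H)\subseteq\varphi(I(P_{ppi}^E))=I(P_{ppi}^F)$, and Proposition \ref{Proposition:ecpbinfntyinvariant} gives $I(H)\subseteq\varphi(I(P_{ec}^E\cup P_{b^\infty}^E))=I(P_{ec}^F\cup P_{b^\infty}^F)$. By Theorem \ref{theor:biggestPIideal}, $I(P_{ppi}^F)=I(P_{pec}^F)\oplus I((P')^F)$; here $I(P_{pec}^F)$ is exactly the sum of all ideals of $L_K(F)$ that are purely infinite simple rings (combine Theorem \ref{theor:biggestPIideal} with Corollary \ref{cor:pii}(i)), and $I((P')^F)$ is the two-sided annihilator of $I(P_{pec}^F)$ inside $I(P_{ppi}^F)$. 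Since isomorphisms preserve purely infinite simple ideals and annihilators, and since $\varphi(I(P_{ppi}^E))=I(P_{ppi}^F)$, we get $\varphi(I(P_{pec}^E))=I(P_{pec}^F)$ and $\varphi(I((P')^E))=I((P')^F)$. Because $P_{ec}^E=P_{pec}^E\sqcup (P_{ec}')^E$ with $(P_{ec}')^E\subseteq (P')^E$, \cite[Proposition 2.4.7]{AAS} yields $I(P_{ec}^E)=I(P_{pec}^E)\oplus I((P_{ec}')^E)$, hence $\varphi(I(P_{ec}^E))=I(P_{pec}^F)\oplus\varphi(I((P_{ec}')^E))$; as $I(P_{pec}^F)\subseteq I(P_{ec}^F)$, it remains only to prove $\varphi(I((P_{ec}')^E))\subseteq I((P_{ec}')^F)$, the reverse inclusion then following by symmetry.

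This last inclusion lives entirely inside the decompositions $I((P')^E)=\bigoplus_{\widetilde{c}\in X_{P'}(E)}I(\widetilde{c}^0)$ and $I((P')^F)=\bigoplus_{\widetilde{c}\in X_{P'}(F)}I(\widetilde{c}^0)$ of Theorem \ref{theor:biggestPIideal}, whose summands are purely infinite, non-simple and indecomposable. An isomorphism carries an indecomposable ideal that is a direct summand to another such ideal, so $\varphi$ permutes these summands; for a matched pair, $\varphi$ restricts to an isomorphism $I(\widetilde{c}^0)\to I(\widetilde{d}^0)$ of Leavitt path algebras of hedgehog graphs (\cite[Theorem 2.5.19]{AAS}), and the task becomes to show that $\varphi$ carries the ideal generated by the extreme cycles sitting inside $I(\widetilde{c}^0)$ into the ideal generated by the extreme cycles sitting inside $I(\widetilde{d}^0)$. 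I would handle this by an analogue of the vertex-chasing in the proof of Proposition \ref{Proposition:ecpbinfntyinvariant}, now exploiting the constraint $H\subseteq\overline{P_{ec}^F\cup P_{b^\infty}^F}$: a vertex of $H$ lying in such a summand but not on an extreme cycle of $F$ would, by Lemma \ref{Lemma:E^0connectsto}, be forced either to connect to an extreme cycle of $F$ through an infinite emitter or to have a tree carrying infinitely many bifurcations; the first is impossible because these summands arise from hereditary sets with no breaking vertices, and the second is excluded exactly as in case (2) of Lemma \ref{Lemma:E^0connectsto} by producing an infinite sequence of bifurcations. This forces $H\subseteq\overline{P_{ec}^F}$, i.e.\ $\varphi(I(P_{ec}^E))\subseteq I(P_{ec}^F)$, and the symmetric argument gives equality. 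The hard part is exactly this last step---isolating, inside one indecomposable purely infinite piece, the sub-ideal generated by extreme cycles as an invariantly defined object; the two global invariance results reduce us to it but do not settle it, since already the naive identity $I(P_{ec}^E)=I(P_{ppi}^E)\cap I(P_{ec}^E\cup P_{b^\infty}^E)$ fails in general, so the fine structure of Section \ref{Sec:structurelpi} is genuinely needed here.
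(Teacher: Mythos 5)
Your route is genuinely different from the paper's, and its decisive last step has a gap. The paper's proof is short because it starts from \cite[Proposition 2.6]{CMMSS}: the ideal $I(P_{ec}^E)$ itself --- not merely $I(P_{pec}^E)$ --- decomposes as $\oplus_{\widetilde{c}\in X_{ec}}I(\widetilde{c}^0)$ with \emph{every} summand purely infinite simple, including the summands coming from extreme cycles lying in $P_{ec}'$. Hence each $\varphi(I(\widetilde{c}^0))$ is a purely infinite simple ideal of $L_K(F)$ contained in $I(P_{ppi}^F)$ by Corollary \ref{Proposition:ppiinvariant}, and Corollary \ref{cor:pii}(i) identifies it as $I(\widetilde{d}^0)$ for an extreme cycle $d$ of $F$, so it lands in $I(P_{ec}^F)$. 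You never invoke this decomposition; instead you split $P_{ec}^E=P_{pec}^E\sqcup (P_{ec}')^E$, dispose of the $P_{pec}$-part by a ring-theoretic characterization (acceptable, granting Theorem \ref{theor:biggestPIideal} and Corollary \ref{cor:pii}(i)), and are then left having to extract the $P_{ec}'$-part from inside the non-simple indecomposable summands of $I(P')$. That is exactly where the proof stalls, as you yourself acknowledge.

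The vertex-chasing sketch offered for that final step does not close. The constraints you actually have on $H$, where $\varphi(I(P_{ec}^E))=I(H)$, are $H\subseteq P_{ppi}^F$ and $H\subseteq\overline{P_{ec}^F\cup P_{b^\infty}^F}$, and these do not force $H\subseteq\overline{P_{ec}^F}$: in the graph with vertices $v_1\to v_2\to v_3\to\cdots$ and two loops at each $v_i$, one has $P_{ppi}=P_{b^\infty}=E^0$, no infinite emitters and hence no breaking vertices anywhere, and $P_{ec}=\emptyset$, so $H=E^0$ satisfies every constraint you impose while meeting no extreme cycle. Your appeal to ``case (2) of Lemma \ref{Lemma:E^0connectsto}'' is misplaced: that argument manufactures infinitely many bifurcations in order to contradict the assumption that a vertex connects to \emph{none} of $P_l$, $P_c$, $P_{ec}$, $P_{b^\infty}$, whereas here the problematic vertices are permitted (indeed expected) to lie in $P_{b^\infty}^F$, so there is no contradiction to reach. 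What rules out such an $H$ as the image of $I(P_{ec}^E)$ is not connectivity of vertices but the fact that $I(P_{ec}^E)$ is a direct sum of simple rings --- precisely the content of \cite[Proposition 2.6]{CMMSS} that your proposal never brings into play.
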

\begin{proof}
Assume that $\varphi: L_K(E) \to L_K(F)$ is a ring isomorphism. By \cite[Proposition 2.6]{CMMSS} we have that $I(P^E_{ec})= \oplus_{\widetilde{c}\in X_{ec}}I(\widetilde{c}^0)$, where every $I(\widetilde{c}^0)$ is purely infinite and simple and $X_{ec}$ is as defined in \cite[Definition 2.2]{CMMSS}. Take a cycle $c$ such that $\widetilde{c}\in X_{ec}$. Since $\varphi$ is an isomorphism and $I(\widetilde{c}^0)$ is purely infinite simple, then $\varphi(I(\widetilde{c}^0))$ is a purely infinite simple ideal of $L_K(F)$. Moreover, by Proposition \ref{Proposition:ppiinvariant}, we get $\varphi(I(P_{ppi}^E)) = I(P_{ppi}^F)$. Since $P_{ec}^E\subseteq P_{ppi}^F$, then $\varphi(I(\widetilde{c}^0))\subseteq I(P_{ppi}^F)$. Use Theorem \ref{theor:biggestPIideal} to get $\varphi(I(\widetilde{c}^0))= I(\widetilde{d}^0)$, where $d$ is a cycle in $X_{pec}^F$. Then $d$ must be an extreme cycle by (i) in Corollary \ref{cor:pii}. Therefore $I(\widetilde{d}^0)\subseteq I(P_{ec}^F)$ and, consequently, our claim follows.
\end{proof}


\section{The largest exchange ideal of a Leavitt path algebra}\label{Sec:largestexchange}

In this section we will describe graphically the largest exchange ideal of a Leavitt path algebra, which exists by \cite[Theorem 3.5]{AMM}.

%

\begin{definition}
\rm
 Let $E$ be an arbitrary graph and $H$ a hereditary subset of $E^0$. We say that $H$ \emph{satisfies Condition} (K) if the restriction graph $E_H$ satisfies Condition (K).
\end{definition}

For an arbitrary graph $E$ we consider the  set
$$P^E_{(K)}:= \{v\in E^0 \ \vert \ T(v) \ \hbox{satisfies Condition (K)}\}.$$
It is clear that $P_{(K)}$ is a hereditary subset of vertices. We define $P^E_{ex}$ as

$$P^E_{ex}:= P_{(K)}\cup B^{P_{(K)}}.$$
When it is clear the graph we are considering, we simply write $P_{(K)}$ and $P_{ex}$.

\begin{theorem}\label{thm:largestex}
Let $E$ be an arbitrary graph and $K$ be a field. Then the largest exchange ideal of the Leavitt path algebra $L_K(E)$ is $I(P_{ex})$.
\end{theorem}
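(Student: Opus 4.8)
The plan is to show the two inclusions separately: first that $I(P_{ex})$ is an exchange ideal, and then that any exchange ideal of $L_K(E)$ is contained in $I(P_{ex})$. For the first part, recall from \cite[Theorem 4.1.3]{AAS} (or the analogous characterization used in this circle of ideas) that a Leavitt path algebra is an exchange ring if and only if its underlying graph satisfies Condition (K). Since $P_{ex} = P_{(K)} \cup B^{P_{(K)}}$ with $P_{(K)}$ hereditary and saturated, the ideal $I(P_{ex})$ is graded, and by the Structure Theorem for graded ideals \cite[Theorem 2.5.8]{AAS} together with \cite[Theorem 2.5.19]{AAS} (and its breaking-vertex refinement \cite[Theorem 2.5.22]{AAS}) it is $K$-algebra isomorphic to $L_K({}_{(P_{(K)}, B^{P_{(K)}})}E)$. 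I would then check directly from the construction of the generalized hedgehog graph that this graph satisfies Condition (K): every closed simple path in it already lies in $E_{P_{(K)}}$, so the two distinct closed simple paths guaranteed there transport to the hedgehog graph, and the newly added "hedgehog" edges $\overline{\alpha}$ do not create or destroy closed simple paths through those vertices. Hence $I(P_{ex})$ is an exchange ring, i.e., an exchange ideal.

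For the reverse inclusion, let $J$ be any exchange ideal of $L_K(E)$. Exchange ideals of Leavitt path algebras are generated by their idempotents, hence graded, so by Proposition \ref{Proposition:piisgraded}'s method (the Structure Theorem again) there is a hereditary saturated $H \subseteq E^0$ with $J = I(H)$, and $J \cong L_K({}_HE)$. Since $J$ is exchange, $L_K({}_HE)$ is an exchange ring, so by the Condition (K) characterization the hedgehog graph ${}_HE$ satisfies Condition (K). I would now argue that this forces $H \subseteq P_{(K)}$: if $v \in H$, every closed simple path through $v$ in $E$ lies entirely in $H$ (as $H$ is hereditary), hence corresponds to a closed simple path through $v$ in ${}_HE$, so the Condition (K) bifurcation of closed simple paths in ${}_HE$ at $v$ pulls back to show that the restriction graph $E_{T(v)}$ satisfies Condition (K) — that is, $T(v)$ satisfies Condition (K), so $v \in P_{(K)}$. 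One must also handle breaking vertices of $H$: arguing as in Proposition \ref{Proposition:piisgraded}, an exchange ring has no "unwanted" structure forcing $S = \emptyset$ is too strong here, so instead I would show any breaking vertex $v \in B_H$ satisfies $v \in B^{P_{(K)}}$, using that $H \subseteq P_{(K)}$ and that the edges from $v$ into $H$ are finitely-co-many. Combining, $H \subseteq P_{(K)} \cup B^{P_{(K)}} = P_{ex}$, hence $J = I(H) \subseteq I(P_{ex})$.

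The main obstacle I anticipate is the careful bookkeeping in the "pull-back" step: verifying that Condition (K) for the hedgehog graph ${}_HE$ genuinely descends to Condition (K) for each restriction graph $E_{T(v)}$, and conversely that it lifts. The hedgehog construction adds edges $\overline{\alpha}$ whose sources are *paths* treated as new vertices; one needs to be sure these auxiliary vertices — which are line points of ${}_HE$ by design, lying on no closed path — neither interfere with the closed-simple-path count at genuine vertices of $H$ nor get erroneously counted. A clean way to handle this is to observe that $E_{P_{(K)}}$ (the restriction graph on $P_{(K)}$) and the "core" of ${}_{(P_{(K)},B^{P_{(K)}})}E$ obtained by deleting the added line points have the *same* closed simple paths, so Condition (K) for one is equivalent to Condition (K) for the other; this reduces everything to a statement purely about $E_{P_{(K)}}$, where the definition of $P_{(K)}$ was tailored. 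The remaining routine point is that Condition (K) on a graph is inherited by restriction graphs on hereditary subsets, which is straightforward since closed simple paths stay inside hereditary sets.
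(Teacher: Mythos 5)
Your overall strategy coincides with the paper's: realize $I(P_{ex})$ as $L_K({}_{(P_{(K)},B^{P_{(K)}})}E)$ via the generalized hedgehog construction, invoke the Condition~(K) characterization of exchange Leavitt path algebras for both inclusions, and pull Condition~(K) back from the hedgehog graph to the trees $T(v)$ for $v\in H$. That part of your argument is sound and is essentially what the paper does (it cites \cite[Proposition 3.3.11]{AAS} for the Condition~(K) equivalence and \cite[Theorem 2.5.22]{AAS} for the hedgehog isomorphism).

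The genuine gap is your first step in the reverse inclusion: the assertion that ``exchange ideals of Leavitt path algebras are generated by their idempotents, hence graded.'' This is not a valid general principle about exchange rings --- ideals of exchange rings are exchange by Ara's results, and, for instance, the Jacobson radical of a unital exchange ring (e.g.\ $2\Z/4\Z$ inside $\Z/4\Z$) is an exchange ring with no nonzero idempotents at all. In the Leavitt path algebra setting the conclusion that an exchange ideal is generated by idempotents is \emph{true}, but only because such an ideal is graded, which is precisely what you are trying to establish; so as written the step is circular. The paper closes this gap with a concrete argument you would need to supply: by the Structure Theorem for arbitrary ideals \cite[Theorem 2.8.10]{AAS}, $J=I(H\cup S^H\cup P_C)$, and if $P_C\neq\emptyset$ then the quotient $J/I(H\cup S^H)$ --- which is again exchange by \cite[Theorem 2.2]{P} --- would be isomorphic to a direct sum of matrix rings over a (proper, nonzero) ideal of $K[x,x^{-1}]$, which is not an exchange ring; hence $P_C=\emptyset$ and $J=I(H\cup S^H)$ is graded. (Your parenthetical appeal to ``Proposition~\ref{Proposition:piisgraded}'s method'' points in the right direction, but there the obstruction is failure of pure infiniteness of $p_c(x)K[x,x^{-1}]$, and you would have to replace it by the failure of the exchange property; you do not do so.) A secondary, repairable looseness: after obtaining gradedness you drop the breaking-vertex part and write $J=I(H)\cong L_K({}_HE)$, only to reintroduce $S$ informally at the end; the cleaner route, as in the paper, is to keep $J=I(H\cup S^H)\cong L_K({}_{(H,S)}E)$ throughout and conclude $H\cup S^H\subseteq P_{ex}=P_{(K)}\cup B^{P_{(K)}}$ directly from Condition~(K) for ${}_{(H,S)}E$.
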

\begin{proof} The ideal $I(P_{ex})$ is graded as it is generated by elements of degree zero in the Leavitt path algebra $L_K(E)$, therefore $I(P_{ex})\cong L_K(_{(P_{(K)}, B_{P_{(K)}})}E)$ by \cite[Theorem 2.5.22]{AAS}. By the definition of $P_{ex}$, it is clear that the hedgehog graph $_{(P_{(K)}, B_{P_{(K)}})}E$ satisfies Condition $(K)$, therefore, $L_K(_{(P_{(K)}, B_{P_{(K)}})}E)$ is an exchange ring, by \cite[Proposition 3.3.11]{AAS} and, consequently, the ideal $I(P_{ex})$ is exchange.

Now, let $I$ be the largest exchange ideal of $L_K(E)$. We prove that it is a graded ideal. By the structure theorem of ideals \cite[Theorem 2.8.10]{AAS}, $I=I(H \cup S^H \cup P_C)$, for $H$, $S^H$ and $P_C$ as described in the mentioned theorem. If $P_C\neq \emptyset$, then $I/I(H\cup S^H)$, which is an exchange ring by \cite[Theorem 2.2]{P}, is a $K$-algebra isomorphic to a direct sum of matrices over an ideal of $K[x, x^{-1}]$. But this is not an exchange ring. Consequently, $P_C=\emptyset$ and $I$ is graded. By \cite[Theorem 2.5.22]{AAS} we obtain that $I$ is isomorphic to the Leavitt path algebra $L_{K}(_{(H,S)}E)$. Since it is exchange, the graph $_{(H,S)}E$ satisfies Condition (K) by \cite[Theorem 3.3.11]{AAS}.   This implies $H \cup S^H \subseteq P_{ex}$ and, therefore,  $I= I(H \cup S^H) \subseteq I(P_{ex})$.
\end{proof}

\begin{corollary}
Let $E$ be an arbitrary graph. Then the ideal $I(P_{ex})$ is invariant under any ring isomorphism.
\end{corollary}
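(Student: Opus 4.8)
The plan is to mimic the invariance proofs already carried out in this excerpt—especially Proposition \ref{Proposition:ecpbinfntyinvariant} and Corollary \ref{Proposition:ppiinvariant}—by showing that the image of the largest exchange ideal under a ring isomorphism is again an exchange ideal, hence contained in the largest one. Concretely, suppose $E$ and $F$ are arbitrary graphs and $\varphi\colon L_K(E)\to L_K(F)$ is a ring isomorphism. Write $I:=I(P_{ex}^E)$ and $I':=I(P_{ex}^F)$. The goal is to prove $\varphi(I)=I'$, and by symmetry (applying the argument to $\varphi^{-1}$) it suffices to show $\varphi(I)\subseteq I'$.

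First I would observe that $\varphi(I)$ is an ideal of $L_K(F)$ which is ring-isomorphic to $I$ via $\varphi|_I$. Since $I$ is an exchange ring (being the largest exchange ideal of $L_K(E)$, as identified in Theorem \ref{thm:largestex}), and the exchange property is a purely ring-theoretic notion preserved under ring isomorphism, $\varphi(I)$ is an exchange ring as well. One subtlety worth spelling out: $\varphi(I)$ is an exchange \emph{ring} (without unit), and one must check it is an exchange \emph{ideal} of $L_K(F)$ in the sense used by \cite{AMM}; but $\varphi$ restricts to an isomorphism of the pair $(I, L_K(E))$ onto $(\varphi(I), L_K(F))$ as $\varphi$ is surjective, so $\varphi(I)$ is an ideal of $L_K(F)$ that happens to be an exchange ring, which is exactly the notion of exchange ideal. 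Then, by the maximality statement in Theorem \ref{thm:largestex} applied to the graph $F$, any exchange ideal of $L_K(F)$ is contained in $I(P_{ex}^F)=I'$; hence $\varphi(I)\subseteq I'$.

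Running the same reasoning with $\varphi^{-1}\colon L_K(F)\to L_K(E)$ gives $\varphi^{-1}(I')\subseteq I$, i.e.\ $I'\subseteq\varphi(I)$. Combining the two inclusions yields $\varphi(I)=I'$, which is precisely the assertion that $I(P_{ex})$ is invariant under ring isomorphisms. This completes the proof.

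The main obstacle—really the only point requiring care—is the passage from ``exchange'' as an abstract ring property to ``exchange ideal'' in the technical sense of \cite{AMM,AMM} (and \cite{P}), i.e.\ making sure that being the largest exchange ideal is characterized intrinsically enough that an isomorphism of ambient algebras transports it correctly. This is handled the same way the purely infinite case was handled in Corollary \ref{Proposition:ppiinvariant}: a ring isomorphism $\varphi$ of the ambient algebras restricts to an isomorphism of each ideal onto its image and, being surjective, identifies the lattice of ideals of $L_K(E)$ with that of $L_K(F)$; an ideal $J\trianglelefteq L_K(E)$ is an exchange ideal iff $J$ is an exchange ring iff $\varphi(J)$ is an exchange ring iff $\varphi(J)$ is an exchange ideal of $L_K(F)$. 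Once this bookkeeping is clear, the maximality half of Theorem \ref{thm:largestex} does all the remaining work, exactly as in the proofs of Proposition \ref{Proposition:ecpbinfntyinvariant} and Corollary \ref{Proposition:ppiinvariant}.
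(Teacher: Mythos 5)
Your proposal is correct and follows essentially the same route as the paper: both show that $\varphi(I(P_{ex}^E))$ is an exchange ideal of $L_K(F)$ (the paper invoking the unit-free characterization of exchange rings from \cite[Theorem 1.2 (c)]{P}, which is exactly the subtlety you flag), then apply the maximality half of Theorem \ref{thm:largestex} and symmetry via $\varphi^{-1}$. No gaps.
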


\begin{proof}
Assume that $E$ and $F$ are arbitrary graphs and let $\varphi: L_K(E) \rightarrow L_K(F)$ be a ring isomorphism. Denote $I:= I(P_{ex}^{E})$ and $J:=I(P_{ex}^{F})$. Using the definition of exchange ring without unit given in \cite[Theorem 1.2 (c)]{P}, it is clear that  $\varphi(I)$ is an exchange ideal in $L_K(F)$. 
 Since $J$ is the largest exchange ideal in $L_K(F)$, by Theorem \ref{thm:largestex}, we get $\varphi(I)\subseteq J$. Applying the same to $\varphi^{-1}$ we have $\varphi^{-1}(J)\subseteq I$, thus we obtain  $J=\varphi(I)$.
\end{proof}


\begin{thebibliography}{10}



\bibitem{AAS} \textsc{Gene Abrams, Pere Ara, Mercedes Siles Molina}, \textit{Leavitt path algebras}. Lecture Notes in Mathematics 2191, Springer (2017).

\bibitem{AA1} \textsc{Gene Abrams, Gonzalo Aranda Pino}, \textit{The Leavitt path algebra of a graph}. J. Algebra \textbf{293} (2005), 319--334.


\bibitem{AAPS} \textsc{Gene Abrams, Gonzalo Aranda Pino, Francesc Perera, Mercedes Siles Molina}, \textit{Chain conditions for Leavitt path algebras}.
Forum Math. \textbf{22} (2010), 95-–114.

\bibitem{P} \textsc{Pere Ara}, \textit{Extension of exchange rings}. J. Algebra \textbf{197} (1997), 409--423.




\bibitem{AMP} \textsc{Pere Ara, Mar\'{\i}a  \'Angeles Moreno,  Enrique Pardo}, \textit{Nonstable K-theory for Graph Algebras}. Algebr. Represent. Theory \textbf{10} (2007), 157--178.

\bibitem{AMM} \textsc{Pere Ara, Miguel G\'{o}mez Lozano, Mercedes Siles Molina}, \textit{Local rings of exchange rings}. Comm. Algebra \textbf{26} (1998), 4191--4205.

\bibitem{ABS} \textsc{Gonzalo Aranda Pino,  Jose Brox,
Mercedes Siles Molina}, \textit{Cycles in Leavitt path algebras by means of idempotents}. Forum Math. \textbf{27}  (2015), 601--633.

\bibitem{AGPS} \textsc{Gonzalo Aranda Pino, Ken R. Goodearl, Francesc Perera, Mercedes Siles Molina}, \textit{Non-simple purely infinite rings}. Amer. J. Math. \textbf{132} (2010), 563-610.



\bibitem{AMMS1} \textsc{Gonzalo Aranda Pino, Dolores Mart\'in Barquero, C\'andido Mart\'in Gonz\'alez, Mercedes Siles Molina}, \textit{The socle of a Leavitt path algebra}. J. Pure Appl. Algebra \textbf{212} (2008), 500–-509.

\bibitem{AMMS2} \textsc{Gonzalo Aranda Pino, Dolores Mart\'in Barquero, C\'andido Mart\'in Gonz\'alez, Mercedes Siles Molina}, 
\textit{Socle theory for Leavitt path algebras of arbitrary graphs}. Rev. Mat. Iberoam. \textbf{26} (2) (2010), 611-–638.


\bibitem{CMMS}\textsc{Lisa O. Clark, Dolores Mart\'{\i}n Barquero, C\'andido Mart\'{\i}n Gonz\'alez, Mercedes Siles Molina}, \textit{ Using Steinberg algebras to study decomposability of Leavitt path algebras}. Forum Math. {\bf 6} (29) (2017), 1311-1324.


\bibitem{CMMSS} \textsc{Mar\'ia G. Corrales Garc\'\i a,  Dolores Mart\'{\i}n Barquero, C\'andido Mart\'{\i}n Gonz\'{a}lez, Mercedes Siles Molina, Jos\'e F. Solanilla Hern\'andez}, \textit{Extreme cycles. The center of a Leavitt path algebra}. Pub. Mat. \textbf{60} (2016), 235--263.


\bibitem{KMMS} \textsc{M\"uge Kanuni, Dolores Mart\'{\i}n Barquero, C\'andido Mart\'{\i}n Gonz\'alez, Mercedes Siles Molina},\textit{ Classification of Leavitt path algebras with two vertices}. Moscow Math. J. (to appear), arXiv:1708.03128v2.




\bibitem{S} \textsc{Mercedes Siles Molina}, \textit{Algebras of quotients of path algebras}. J. Algebra \textbf{319} (2008), 5265--5278.



\end{thebibliography}
\end{document}